\DeclareFontFamily{OT1}{pzc}{}
\DeclareFontShape{OT1}{pzc}{m}{it}%
             {<-> s * [1.195] pzcmi7t}{}
\DeclareMathAlphabet{\mathscr}{OT1}{pzc}%
                                 {m}{it}
\newcommand{\tensor}{\otimes}
\newcommand{\Spec}{\operatorname{Spec}}
\newcommand{\isomto}{{\stackrel{\sim}{\;\longrightarrow\;}}}
\newcommand{\isomt}{{\stackrel{{\scriptscriptstyle{\sim}}}{\;\rightarrow\;}}}
\newcommand{\coker}{\operatorname{coker}}
\newcommand{\hooklongrightarrow}{\lhook\joinrel\longrightarrow}
\newcommand{\longrightrightarrow}{\relbar\joinrel\twoheadrightarrow}
\renewcommand{\O}{{\mathcal O}}
\renewcommand{\hom}{\operatorname{Hom}}
\newcommand{\cplx}{{\mathbb C}}
\newcommand{\Z}{{\mathbb Z}}
\newcommand{\aone}{{\mathbb A}^1}
\newcommand{\pone}{{\mathbb P}^1}
\newcommand{\gm}{{{\mathbf G}_{m}}}
\renewcommand{\L}{{\mathcal L}}
\newcommand{\N}{{\mathcal N}}
\newcommand{\MW}{\mathrm{MW}}
\newcommand{\et}{\text{\'et}}
\newcommand{\ho}[1]{\mathscr{H}({#1})}
\newcommand{\hop}[1]{\mathscr{H}_{\bullet}({#1})}
\newcommand{\bpi}{\boldsymbol{\pi}}
\newcommand{\Nis}{\operatorname{Nis}}
\newcommand{\Shv}{{\mathscr{Shv}}}
\newcommand{\Sm}{\mathscr{Sm}}
\newcommand{\Spc}{\mathscr{Spc}}
\newcommand{\K}{{{\mathbf K}}}
\newcommand{\hsnis}{\mathscr{H}_s^{\Nis}(k)}
\newcommand{\hspnis}{\mathscr{H}_{s,\bullet}^{\Nis}(k)}
\newcommand{\F}{{\mathcal F}}
\newcounter{intro}
\theoremstyle{plain}
\newtheorem{thm}{Theorem}[section]
\newtheorem{lem}[thm]{Lemma}
\newtheorem{cor}[thm]{Corollary}
\newtheorem{prop}[thm]{Proposition}
\newtheorem*{claim*}{Claim}  
\newtheorem*{thm*}{Theorem}
\newtheorem*{problem*}{Problem}
\newtheorem{thmintro}{Theorem}
\theoremstyle{definition}
\newtheorem{defn}[thm]{Definition}
\newtheorem{notation}[thm]{Notation}
\theoremstyle{remark}
\newtheorem{rem}[thm]{Remark}
\newtheorem{remintro}[thmintro]{Remark}
\numberwithin{equation}{section}
\begin{document}
\pagestyle{fancy}
\renewcommand{\sectionmark}[1]{\markright{\thesection\ #1}}
\fancyhead{}
\fancyhead[LO,R]{\bfseries\footnotesize\thepage}
\fancyhead[LE]{\bfseries\footnotesize\rightmark}
\fancyhead[RO]{\bfseries\footnotesize\rightmark}
\chead[]{}
\cfoot[]{}
\setlength{\headheight}{1cm}

\author{\begin{small}Aravind Asok\thanks{Aravind Asok was partially supported by National Science Foundation Awards DMS-0900813 and DMS-0966589.}\end{small} \\ \begin{footnotesize}Department of Mathematics\end{footnotesize} \\ \begin{footnotesize}University of Southern California\end{footnotesize} \\ \begin{footnotesize}Los Angeles, CA 90089-2532 \end{footnotesize} \\ \begin{footnotesize}\url{asok@usc.edu}\end{footnotesize} \and \begin{small}Jean Fasel\thanks{Jean Fasel was supported by the Swiss National Science Foundation, grant PAOOP2\_129089}\end{small} \\ \begin{footnotesize}Fakult\"at Mathematik\end{footnotesize} \\ \begin{footnotesize}Universit\"at Duisburg-Essen, Campus Essen\end{footnotesize} \\ \begin{footnotesize}Thea-Leymann-Strasse 9, D-45127 Essen\end{footnotesize} \\ \begin{footnotesize}\url{jean.fasel@gmail.com}\end{footnotesize}}

\title{{\bf A cohomological classification of \\ vector bundles on smooth affine threefolds}}
\date{}
\maketitle

\begin{abstract}
We give a cohomological classification of vector bundles of rank $2$ on a smooth affine threefold over an algebraically closed field having characteristic unequal to $2$.  As a consequence we deduce that cancellation holds for rank $2$ vector bundles on such varieties.  The proofs of these results involve three main ingredients.  First, we give a description of the first non-stable $\aone$-homotopy sheaf of the symplectic group.  Second, these computations can be used in concert with F. Morel's $\aone$-homotopy classification of vector bundles on smooth affine schemes and obstruction theoretic techniques (stemming from a version of the Postnikov tower in $\aone$-homotopy theory) to reduce the classification results to cohomology vanishing statements.  Third, we prove the required vanishing statements.
\end{abstract}

\begin{footnotesize}
\tableofcontents
\end{footnotesize}
\newpage

\section{Introduction}
This paper, together with its companions \cite{AsokFaselSpheres,AsokFaselpi3A3minus0}, studies questions in the theory of projective modules using the Morel-Voevodsky $\aone$-homotopy theory \cite{MV}.  The jumping off point of this approach is F. Morel's $\aone$-homotopy classification of vector bundles over smooth affine schemes \cite{MField}, which can be viewed as an algebro-geometric analog of Steenrod's homotopy classification of topological vector bundles \cite[\S 19.3]{Steenrod}.  The basic idea is that, by applying the machinery of obstruction theory in the context of $\aone$-homotopy theory, one may mimic results about classification of topological vector bundles (see, e.g., \cite{DoldWhitney}).  Write ${\mathscr V}_n(X)$ for the set of isomorphism classes of rank $n$ vector bundles on $X$.

\begin{thmintro}[{See Theorem \ref{thm:classificationrank2algclosed} and Corollary \ref{cor:stableisomorphismimpliesisomorphism}}]
\label{thmintro:mainclassification}
Suppose $X$ is a smooth affine $3$-fold over an algebraically closed field $k$ having characteristic unequal to $2$.  The map assigning to a rank $2$ vector bundle $\mathcal{E}$ on $X$ the pair $(c_1({\mathcal{E}}),c_2(\mathcal{E}))$ of Chern classes gives a pointed bijection
\[
\mathcal{V}_2(X) \isomto Pic(X) \times CH^2(X).
\]
\end{thmintro}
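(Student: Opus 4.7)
The plan is to apply Morel's $\aone$-homotopy classification of vector bundles on smooth affine schemes, which identifies $\mathcal{V}_2(X)$ with $[X, BGL_2]_{\aone}$, and then to attack the right-hand side through obstruction theory along the $\aone$-Postnikov tower.

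First, I would factor the problem using the determinant fibration
\[
BSL_2 \longrightarrow BGL_2 \longrightarrow B\gm.
\]
The induced map $[X, BGL_2]_{\aone} \to [X, B\gm]_{\aone} = \Pic(X)$ is precisely $c_1$, so for each fixed $L \in \Pic(X)$ it remains to classify the fiber: rank $2$ bundles with determinant $L$, equivalently a suitable (twisted) set of homotopy classes $[X, BSL_2]_{\aone}$. It thus suffices to show that $c_2$ induces a bijection between such a set and $CH^2(X)$.

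Second, I would analyze the $\aone$-Postnikov tower of $BSL_2$. Since $SL_2$ is $\aone$-connected, $BSL_2$ is $\aone$-$1$-connected, and by Morel's computation
\[
\bpi_2^{\aone}(BSL_2) = \bpi_1^{\aone}(SL_2) = \K^{\MW}_2.
\]
Hence the second stage of the tower is $K(\K^{\MW}_2,2)$, and the resulting map $[X, BSL_2]_{\aone} \to H^2(X, \K^{\MW}_2)$ should be identifiable, up to sign and after comparing $H^2(X, \K^{\MW}_2)$ with $CH^2(X)$ via the natural transformation $\K^{\MW}_2 \to \K^{M}_2$, with the second Chern class. The next stage of the tower is controlled by $\bpi_3^{\aone}(BSL_2) = \bpi_2^{\aone}(SL_2) = \bpi_2^{\aone}(Sp_2)$, the first non-stable $\aone$-homotopy sheaf of the symplectic group (using $SL_2 = Sp_2$). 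Standard obstruction theory, together with the fact that $X$ is a smooth affine threefold and so has Nisnevich cohomological dimension at most $3$, then shows that bijectivity of the comparison with $CH^2(X)$ reduces to the vanishing of $H^j(X, \bpi_2^{\aone}(Sp_2))$ in the relevant low degrees, plus the comparison of $H^*(X, \K^{\MW}_\bullet)$ with Chow groups over an algebraically closed base.

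The main obstacle is proving these vanishing statements, which requires two complementary ingredients. First, one needs an explicit description of $\bpi_2^{\aone}(Sp_2)$; I would approach this via an algebraic analogue of the EHP sequence, or via a fiber sequence argument involving the quaternionic Hopf map and the motivic spheres studied in the companion papers, so as to express the sheaf in terms of already-understood Milnor--Witt and Milnor $K$-theory sheaves. Second, I would invoke the Rost--Schmid (Gersten-type) resolution of a strictly $\aone$-invariant sheaf to reduce each required cohomological vanishing on $X$ to a divisibility or triviality statement about the associated Milnor-type $K$-groups of function fields over the algebraically closed base $k$, which ought to hold in the required range. Finally, the cancellation corollary is immediate: if $\mathcal{E} \oplus \O \cong \F \oplus \O$ then the Whitney sum formula gives $c_1(\mathcal{E}) = c_1(\F)$ and $c_2(\mathcal{E}) = c_2(\F)$, so the main bijection forces $\mathcal{E} \cong \F$.
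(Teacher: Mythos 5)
Your outline is on the right track: Morel's theorem converts the problem into computing $[X,BGL_2]_{\aone}$ via the $\aone$-Postnikov tower, and the paper does exactly this. But there are three places where the proposal papers over the real technical content, and the third is a genuine gap.

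First, the paper does not pass to $BSL_2$ as you suggest; it works directly with the Postnikov tower of $BGL_2$, so that the second stage is the \emph{twisted} Eilenberg--Mac Lane space $K^{\gm}(\K^{MW}_2,2)$, and lifts over a fixed $\L\in\Pic(X)$ are parametrized by the twisted group $H^2_{\Nis}(X,\K^{MW}_2(\L))$. You do acknowledge that a ``suitable (twisted) set'' is needed, but the nontrivial step is identifying the $\gm$-action on $\K^{MW}_2=\mathbf{GW}^2_2$ with multiplication by $\K^{MW}_0$ (Lemmas~\ref{lem:actionongwij} and \ref{lem:multiplicativeaction}), without which the twisted cohomology groups are not pinned down.

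Second, the computation of $\bpi_2^{\aone}(Sp_2)$ in the paper does not use an EHP sequence or the quaternionic Hopf map directly; it uses the stabilization fiber sequence $Sp_2\to Sp_4\to Sp_4/Sp_2\simeq \mathbb{A}^4\setminus 0$, together with the comparison between symplectic and linear stabilization. The result is an extension of $\K^{Sp}_3$ by a sheaf $\mathbf{T}'_4$ which itself sits between a quotient of $\mathbf{I}^5$ and a sheaf $\mathbf{S}'_4$ receiving an epimorphism from $\K^M_4/12$ (Theorem~\ref{thm:firstnonstablehomotopysheafofsp2n}). This description is the engine of the whole argument and cannot be replaced by vague appeal to an EHP-type sequence.

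Third, and most importantly, the final vanishing $H^3_{\Nis}(X,\bpi_3^{\aone}(BGL_2)(\xi))=0$ is not reducible across the board to ``divisibility of Milnor $K$-groups of function fields.'' That reduction works for the $\mathbf{T}'_4$-piece: after passing to an algebraically closed base, $\mathbf{I}^5$ dies, $\mathbf{T}'_4\cong\mathbf{S}'_4$, and $H^3_{\Nis}(X,\K^M_4/12)=0$ because residue fields at closed points are divisible (Proposition~\ref{prop:torsionmilnorKtheoryvanishing}). But for the $\K^{Sp}_3$-piece one needs to control $H^3_{\Nis}(X,\mathbf{GW}^2_3(\L))$, and the paper does this by showing (Proposition~\ref{prop:gw23bound}, refined in Theorem~\ref{thm:cohomologyofksp3}) that this group is a quotient of $Ch^3(X)$ by the image of a twisted Steenrod square from $Ch^2(X)$; the vanishing then follows because $CH^3(X)$ is divisible for a smooth affine threefold over an algebraically closed field. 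Your proposal gives no route to this key identification. Similarly, the comparison $H^2_{\Nis}(X,\K^{MW}_2(\L))\cong H^2_{\Nis}(X,\K^M_2)\cong CH^2(X)$ rests on a new vanishing theorem for $\mathbf{I}^j$-cohomology on smooth affine schemes (Proposition~\ref{prop:vanishingII}), which is genuinely more than ``apply the Rost--Schmid complex.'' Until these specific identifications and vanishings are in place, the obstruction-theoretic skeleton you describe does not close.

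Your derivation of the cancellation corollary from the main bijection via the Whitney sum formula is correct and matches the paper.
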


For context, recall that  N. Mohan Kumar and M.P. Murthy proved \cite{KumarMurthy} the existence of a rank $2$ vector bundle on a smooth affine threefold over an algebraically closed field having given Chern classes, i.e., the map in the statement was known to be surjective.  Mohan Kumar and Murthy also proved that the map $(c_1,c_2,c_3): \mathcal{V}_3(X) \to \prod_{i=1}^3 CH^i(X)$, analogous to the one studied above, is bijective.

\begin{remintro}
Let $X$ be a smooth affine $3$-fold over an algebraically closed field.  Theorem \ref{thmintro:mainclassification} implies that cancellation holds for vector bundles of rank $2$ on $X$, i.e., if $\mathcal{E}_1,\mathcal{E}_2 \in \mathcal{V}_2(X)$ satisfy $\mathcal{E}_1 \oplus \mathcal{E} \cong \mathcal{E}_2 \oplus \mathcal{E}$ for some vector bundle $\mathcal{E}$, then $\mathcal{E}_1 \cong \mathcal{E}_2$.  A weaker version of this cancellation statement, with $\mathcal{E}_1$ free, was established by the second author in \cite{Fasel3fold}.  Note also that, for vector bundles of rank $\neq 2$, the cancellation statement above is classical: it follows from results of Suslin (for rank $3$ bundles) \cite[Theorem 1]{Suslin77} and Bass-Schanuel \cite[Theorem 2]{Bass62} (for bundles of rank $> 3$).
\end{remintro}

As mentioned before, we use techniques of obstruction theory to describe the set of $\aone$-homotopy classes of maps to an appropriate Grassmannian.  There is a version of the Postnikov tower in $\aone$-homotopy theory, and the main impediments to applying this method to describe homotopy classes arise from our limited knowledge of $\aone$-homotopy sheaves.  Classical homotopy groups are notoriously difficult to compute, and $\aone$-homotopy groups are no different in this respect.  For the result above, we will need information about $\aone$-homotopy groups of $BGL_2$.  By means of the exceptional isomorphism $SL_2 \cong Sp_2$, the computation can be reduced to that of describing the first ``non-stable" $\aone$-homotopy sheaf of the symplectic group.

\begin{thmintro}[See Theorem \ref{thm:firstnonstablehomotopysheafofsp2n} and Lemma \ref{lem:contractionsofsprimeneven}]
\label{thmintro:maincomputation}
If $k$ is a perfect field having characteristic unequal to $2$, there is a canonical short exact sequence of strictly $\aone$-invariant sheaves
\[
0 \longrightarrow {\mathbf T}_4' \longrightarrow \bpi_2^{\aone}(SL_2) \longrightarrow \K^{Sp}_3 \longrightarrow 0.
\]
Here, $\K^{Sp}_3$ is the sheafification of the third symplectic K-theory group for the Nisnevich topology.  The sheaf ${\mathbf T}_4^{\prime}$ sits in an exact sequence of the form
\[
\mathbf{I}^5 \longrightarrow \mathbf{T}'_4 \longrightarrow \mathbf{S}'_4 \longrightarrow 0
\]
where ${\mathbf I}^5$ is the unramified sheaf associated with the $5$th power of the fundamental ideal in the Witt ring, and there is an epimorphism $\K^M_4/12 \to {\mathbf S}_4^{\prime}$ that becomes an isomorphism after $2$-fold contraction.
\end{thmintro}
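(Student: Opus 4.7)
The plan is to use the $\aone$-fiber sequence
\[
Sp_{2n} \longrightarrow Sp_{2n+2} \longrightarrow \aone^{2n+2} \setminus 0
\]
as the main geometric input. This comes from projecting a symplectic matrix onto its first column: $Sp_{2n+2}$ acts transitively on $\aone^{2n+2}\setminus 0$, and the stabilizer of a basis vector is an extension of $Sp_{2n}$ by a Heisenberg-type vector group $\aone^{2n+1}$, hence $\aone$-equivalent to $Sp_{2n}$. Specializing to $n = 1$, invoking $SL_2 \cong Sp_2$, Morel's identification $\bpi_{m-1}^{\aone}(\aone^m\setminus 0) \cong \K^{MW}_m$ and the $(m-2)$-$\aone$-connectedness of $\aone^m \setminus 0$ (so that $\bpi_2^{\aone}(\aone^4 \setminus 0) = 0$), the long exact sequence of $\aone$-homotopy sheaves truncates to
\[
\K^{MW}_4 \stackrel{\partial}{\longrightarrow} \bpi_2^{\aone}(SL_2) \longrightarrow \bpi_2^{\aone}(Sp_4) \longrightarrow 0.
\]

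I would then identify $\bpi_2^{\aone}(Sp_4)$ with $\K^{Sp}_3$ by further stabilization. Iterating the fiber sequence with $n \geq 2$, the base $\aone^{2n+2}\setminus 0$ is at least $4$-$\aone$-connected, so each stabilization $\bpi_2^{\aone}(Sp_{2n}) \to \bpi_2^{\aone}(Sp_{2n+2})$ is an isomorphism; passing to the filtered colimit gives $\bpi_2^{\aone}(Sp_4) \cong \bpi_2^{\aone}(Sp_\infty)$, and the latter is identified with the Nisnevich sheafification of symplectic K-theory $\K^{Sp}_3$ via a group-completion/plus-construction comparison. Setting $\mathbf{T}_4' := \mathrm{im}(\partial)$ then furnishes the short exact sequence of the theorem.

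The remaining task is to describe $\mathbf{T}_4'$. Morel's fundamental sequence $0 \to \mathbf{I}^5 \to \K^{MW}_4 \to \K^M_4 \to 0$ exhibits the cokernel $\mathbf{S}_4' := \mathbf{T}_4'/\partial(\mathbf{I}^5)$ as a quotient of $\K^M_4$, producing the sequence $\mathbf{I}^5 \to \mathbf{T}_4' \to \mathbf{S}_4' \to 0$ tautologically. The hard part will be the sharper bound $\K^M_4/12 \twoheadrightarrow \mathbf{S}_4'$ and its upgrade to an isomorphism after two-fold contraction. My plan is to analyze $\partial$ very explicitly on symbols, factoring it through the motivic Hopf map $\eta$: the integer $12$ should then appear as an unstable/motivic refinement of the classical relation $24\nu = 0$ in $\pi_3^s$. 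For the contraction statement, I would use exactness of Morel's contraction functor on strictly $\aone$-invariant sheaves, together with $(\K^M_n)_{-j} = \K^M_{n-j}$ and $(\mathbf{I}^n)_{-j} = \mathbf{I}^{n-j}$, reducing matters to a direct verification in Milnor-Witt K-theory over residue fields. An alternative pathway, possibly cleaner, is to extract a presentation of $\mathbf{T}_4'$ by generators and relations from the cell structure of $Sp_4/Sp_2 \simeq \aone^4 \setminus 0$, and read off the relations cutting out $\mathbf{S}_4'$ from this data.
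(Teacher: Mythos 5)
Your overall scaffolding matches the paper's almost exactly: the fiber sequence $Sp_{2n} \to Sp_{2n+2} \to \aone^{2n+2}\setminus 0$ (equivalently, the $Sp_{2n}$-torsor $Sp_{2n+2} \to Sp_{2n+2}/Sp_{2n}$ together with the observation that $Sp_{2n+2}/Sp_{2n} \to \aone^{2n+2}\setminus 0$ has affine-space fibers), Morel's computation $\bpi_{2n+1}^{\aone}(\aone^{2n+2}\setminus 0)\cong\K^{MW}_{2n+2}$, the stabilization of $\bpi_2^{\aone}(Sp_4)$ to $\K^{Sp}_3$, the definition $\mathbf{T}'_4 := \mathrm{im}(\partial) = \coker(\K^{Sp}_4 \to \K^{MW}_4)$, and the resulting exact sequence $\mathbf{I}^5 \to \mathbf{T}'_4 \to \mathbf{S}'_4 \to 0$ obtained from Morel's $0 \to \mathbf{I}^5 \to \K^{MW}_4 \to \K^M_4 \to 0$. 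All of this is sound and is exactly what the paper does.

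The genuine gap is the bound $\K^M_4/12 \twoheadrightarrow \mathbf{S}'_4$ and the subsequent contraction statement: your proposal defers these to a future ``explicit analysis of $\partial$ on symbols, factoring through the motivic Hopf map,'' with the integer $12$ anticipated to come from an unstable refinement of $24\nu = 0$. That is a heuristic, not an argument, and it is unclear how it would be made precise since $\partial$ lands in $\bpi_2^{\aone}(Sp_2)$, which is exactly the unknown object. What the paper actually does is quite different and worth internalizing. It uses the standard inclusion $Sp_{4} \hookrightarrow SL_4$ to compare the symplectic stabilization map $\varphi_4 : \K^{Sp}_4 \to \K^{MW}_4$ with the linear one, obtaining a factorization $\varphi'_4 = \psi_4 \circ f_{4,2}$ where $f_{4,2}:\K^{Sp}_4 \to \K^Q_4$ is the forgetful map and $\psi_4 : \K^Q_4 \to \K^M_4$ is (the sheafification of) Suslin's homomorphism. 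Precomposing with the hyperbolic map $H_{4,2}:\K^Q_4 \to \K^{Sp}_4$ and the natural map $\mu_4:\K^M_4 \to \K^Q_4$, and using two computations — Suslin's relation $\psi_4 \circ \mu_4 = 3! \cdot \mathrm{Id}$ and the identity $f_{4,2}\circ H_{4,2} = 2\cdot\mathrm{Id}$ on Milnor symbols (which is the content of the Grothendieck--Witt Lemma \ref{lem:computationfH}) — one finds that the composite $\K^M_4 \to \K^M_4$ is multiplication by $2\cdot 3! = 12$, so $12\K^M_4 \subseteq \mathrm{im}(\varphi'_4)$ and hence $\K^M_4/12 \twoheadrightarrow \mathbf{S}'_4$. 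Your proposed route bypasses this entire hyperbolic/forgetful mechanism and would need to rediscover both Suslin's factor $(2n+1)!$ and the extra factor $2$ by hand. Similarly, the contraction statement in the paper leans crucially on Proposition \ref{prop:contractionofgw} ($(\mathbf{GW}^j_i)_{-1} = \mathbf{GW}^{j-1}_{i-1}$), which controls how the $\K^{Sp}$-contribution deloops, not just on the formulas $(\K^M_n)_{-1}=\K^M_{n-1}$ and $(\mathbf{I}^n)_{-1}=\mathbf{I}^{n-1}$ that you cite; without this GW-theoretic input the final comparison of contracted sheaves does not go through.
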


\begin{remintro}
Wendt \cite{WendtChevalley} provides a rather general description of sections of $\aone$-homotopy sheaves of $SL_n$ (and, more generally, Chevalley groups) as ``unstable Karoubi-Villamayor K-theory."  Our approach complements this approach by a description that is amenable to cohomological computation.
\end{remintro}

The proof of Theorem \ref{thmintro:maincomputation} relies on the theory of $\aone$-fiber sequences attached to $Sp_{2n}$-bundles developed by Morel \cite{MField} and Wendt \cite{WendtTorsor}, which is reviewed in Section \ref{section:aonehomotopystablerange} and complemented in Section \ref{section:aonehomotopynonstable}.  We use some results from the theory of Grothendieck-Witt groups, which is reviewed in Section \ref{section:gwgroups}.

Given Theorem \ref{thmintro:maincomputation}, the general techniques of obstruction theory, as discussed in Section \ref{section:obstructionsandclassification}, reduce the proof of Theorem \ref{thmintro:mainclassification} to identifications of and vanishing statements for certain cohomology groups with coefficients in $\aone$-homotopy sheaves; these vanishing statements, together with some complements, occupy Sections \ref{section:gwgroups} and \ref{section:vanishing}. In particular, we establish a new vanishing theorem for certain $\mathbf{I}^j$-cohomology groups on smooth affine schemes and a precise description of $H^3_{\Nis}(X,\K^{Sp}_3)$ (and its twisted versions) for any smooth scheme $X$ using Chow groups mod $2$ and Steenrod operations (see Theorem \ref{thm:cohomologyofksp3}). Finally, Section \ref{section:complexrealization} discusses compatibility of our computations with complex realization.

\subsubsection*{Acknowledgements}
This project was initially begun when both authors were attending the ``Summer school on rigidity and the conjecture of Friedlander and Milnor" held in August 2011 at the University of Regensburg.  A first version of this paper was put on the ArXiv in May 2012, but the current version differs rather substantially from the initial version both in scope and content; this version owes a debt to many people.  First and foremost, the authors would like to thank Fabien Morel for useful discussion of material in and around that of \cite{MField}; the results contained therein played a formative role in the ideas of this paper.  We have had opportunity to discuss these ideas and results with many people including Matthias Wendt, Chuck Weibel, V. Srinivas, Ben Williams, and Christian Haesemeyer, each of whom contributed useful comments and questions and helped clarify our thoughts.  The first author would like to thank Sasha Merkurjev for explanations about the motivic spectral sequence, the product of which, however, no longer appears in this work.  The second author would like to thank Marco Schlichting for kindly providing the proof of Lemma \ref{lem:multiplicativeaction}.  Finally, the authors thank the referees of previous versions of this paper for a number of comments that helped to streamline and clarify the discussion, and for weeding out some silly mistakes.

\section{$\aone$-fiber sequences and the stable range for $Sp_{2n}$}
\label{section:aonehomotopystablerange}
In this section, we review some preliminaries from $\aone$-homotopy theory, especially some results from the theory of $\aone$-fiber sequences, due to Morel and Wendt, and Morel's classification theorem for vector bundles over smooth affine schemes.  We then recall some stabilization results for $\aone$-homotopy sheaves of linear and symplectic groups; these results are also due to Morel and Wendt.  The ultimate goal of this section is to define the stable range for $Sp_{2n}$, and understand the $\aone$-homotopy sheaves of $Sp_{2n}$ in this range.

\subsubsection*{Preliminaries from $\aone$-homotopy theory}
Assume $k$ is a field.  Write $\Sm_k$ for the category of schemes that are smooth, separated and have finite type over $\Spec k$.  Set $\Spc_k := \Delta^{\circ}\Shv_{\Nis}(\Sm_k)$ (resp. $\Spc_{k,\bullet}$) for the category of (pointed) simplicial sheaves on the site of smooth schemes equipped with the Nisnevich topology; objects of this category will be referred to as {\em (pointed) $k$-spaces}, or simply as {\em (pointed) spaces} if $k$ is clear from the context.  Write $\hsnis$ (resp $\hspnis$) for the (pointed) Nisnevich simplicial homotopy category: this category can be obtained as the homotopy category of, e.g., the injective local model structure on $\Spc_k$ (see, e.g., \cite{MV} for details).  Write $\ho{k}$ (resp. $\hop{k}$) for the associated $\aone$-homotopy category, which is constructed as a Bousfield localization of $\hsnis$ (resp. $\hspnis$).

Given two (pointed) spaces $\mathscr{X}$ and $\mathscr{Y}$, we set $[\mathscr{X},\mathscr{Y}]_{s} := \hom_{\hsnis}(\mathscr{X},\mathscr{Y})$ and $[\mathscr{X},\mathscr{Y}]_{\aone} := \hom_{\ho{k}}(\mathscr{X},\mathscr{Y})$; morphisms in pointed homotopy categories will be denoted similarly with base-points explicitly written if it is not clear from context.  We write $S^i_s$ for the constant sheaf on $\Sm_k$ associated with the simplicial $i$-sphere, and $\gm$ will always be pointed by $1$.  If $\mathscr{X}$ is any space, the sheaf of $\aone$-connected components $\bpi_0^{\aone}({\mathscr X})$ is the Nisnevich sheaf associated with the presheaf $U \mapsto [U,\mathscr{X}]_{\aone}$.  More generally, the $\aone$-homotopy sheaves of a pointed space $(\mathscr{X},x)$, denoted $\bpi_i^{\aone}(\mathscr{X},x)$ are defined as the Nisnevich sheaves associated with the presheaves $U \mapsto [S^i_s \wedge U_+,(\mathscr{X},x)]_{\aone}$.  We also write $\bpi_{i,j}^{\aone}({\mathscr X},x)$ for the Nisnevich sheafification of the presheaf $U \mapsto [S^i_s \wedge \gm^{\wedge j} \wedge U_+,({\mathscr X},x)]_{\aone}$.  For $i = j = 0$, the morphism of sheaves $\bpi_{0,0}^{\aone}(\mathscr{X},x) \to \bpi_0^{\aone}({\mathscr X})$ obtained by ``forgetting the base-point" is an isomorphism.  If $\mathscr{X}$ is an $\aone$-connected space or if the base-point is clear from context, to ease the notational burden, we will sometimes suppress the base-point.

\subsubsection*{Strictly $\aone$-invariant sheaves}
A presheaf of sets $\F$ on $\Sm_k$ is called {\em $\aone$-invariant} if for any smooth $k$-scheme $U$ the morphism $\F(U) \to \F(U \times \aone)$ induced by pullback along the projection $U \times \aone \to U$ is a bijection.  A Nisnevich sheaf of groups $\mathcal{G}$ is called {\em strongly $\aone$-invariant} if the cohomology presheaves $H^i_{\Nis}(\cdot,\mathcal{G})$ are $\aone$-invariant for $i = 0,1$.  A Nisnevich sheaf of abelian groups ${\mathbf A}$ is called {\em strictly $\aone$-invariant} if the cohomology presheaves $H^i_{\Nis}(\cdot,{\mathbf A})$ are $\aone$-invariant for every $i \geq 0$.  We will repeatedly use the fact that strictly $\aone$-invariant sheaves are unramified in the sense of \cite[Definition 2.1]{MField}.  One consequence of this is, e.g. by \cite[Theorem 2.12]{MField}, that if $f:\mathbf {A}\to \mathbf {A}^\prime$ is a morphism of strictly invariant sheaves then $f$ is an isomorphism (or epimorphism or monomorphism) if and only if the induced map on sections over (finitely generated) extensions of the base field are isomorphisms.

\subsubsection*{Contractions}
Suppose $\mathcal{G}$ is a strongly $\aone$-invariant sheaf of groups.  For any smooth $k$-scheme $U$, the unit of $\gm$ defines a morphism $U \to \gm \times U$.  Recall that $\mathcal{G}_{-1}$ is the sheaf
\[
\mathcal{G}_{-1}(U) = \ker(\mathcal{G}(\gm \times U) \to \mathcal{G}(U))
\]
Iterating this construction one defines $\mathcal{G}_{-i}$. If we restrict $()_{-1}$ to the category of strictly $\aone$-invariant sheaves of groups, it is an exact functor (see, e.g., \cite[Lemma 7.33]{MField} or, more precisely, its proof).

\subsubsection*{The Rost-Schmid complex, Gersten resolutions and line bundle twists}
Associated with any strictly $\aone$-invariant sheaf $\mathbf{A}$ is a Rost-Schmid complex \cite[\S 4]{MField}, which coincides with the usual Gersten complex \cite[Corollary 5.44]{MField} and has terms described using contractions of $\mathbf{A}$ \cite[Definition 5.7]{MField}.  This complex provides a flasque resolution of $\mathbf{A}$ by \cite[Theorem 5.41]{MField}, and its cohomology groups compute Nisnevich (or Zariski) cohomology groups of $\mathbf{A}$ \cite[Corollary 5.43]{MField}.

Write $\K^{MW}_n$ for the $n$-th unramified Milnor-Witt K-theory sheaf described in \cite[\S 3.2]{MField}.  As explained in \cite[pp.77-80]{MField}, any strictly $\aone$-invariant sheaf of the form ${\mathbf A}_{-1}$ admits a canonical action of $\gm$ that factors through an action of $\K^{MW}_0$ \cite[Lemma 3.49]{MField}.  Moreover, the $\gm$-action on $\K^{MW}_n$ that arises in this fashion is precisely the one coming from the product map $\K^{MW}_0 \times \K^{MW}_n \to \K^{MW}_n$.

If $X$ is a smooth scheme, $\L$ is a line bundle on $X$, and ${\mathbf A}_{-1}$ is a strictly $\aone$-invariant sheaf, the $\gm$-action just mentioned allows one to ``twist" the sheaf ${\mathbf A}_{-1}$ by $\L$ as follows.  Write $\L^{\circ}$ for the $\gm$-torsor corresponding to the complement of the zero section in $\L$ and define ${\mathbf A}_{-1}(\L)$ as the sheaf on the small Nisnevich site of $X$ associated with the presheaf assigning to an \'etale morphism $u: U \to X$ the abelian group ${\mathbf A}_{-1}(U) \otimes_{\Z[\O(U)^{\times}]} \Z[(u^*\L)^{\circ}(U)]$. In this case, the twisted Rost-Schmid complex $C_*^{RS}(X,\L;{\mathbf A}_{-1})$ defined in \cite[Remark 5.14.2]{MField} provides a flasque resolution of ${\mathbf A}_{-1}(\L)$.

In the special case where ${\mathbf A} = \mathbf{I}^{j}$ the unramified sheaf associated with the fundamental ideal in the Witt ring (see \cite[\S 2.1]{Fasel09d}), the description of the $\K^{MW}_0$-action on ${\mathbf A}_{-1}=\mathbf{I}^{j-1}$ from \cite[Lemma 3.49]{MField} shows that $\mathbf{I}^j(\L)$ and the associated twisted Rost-Schmid complex coincide with the twisted sheaf denoted $I^j_{\L}$ and the twisted Gersten-Witt resolution from \cite{Fasel09d}.  Likewise, the twisted Rost-Schmid complex of $\K^{MW}_n(\L)$ coincides with the fiber product of \cite[Definition 10.2.7]{FaselChowWitt}.  These ``twisted" constructions will only reappear in Sections \ref{section:gwgroups},  \ref{section:vanishing} and \ref{section:obstructionsandclassification}.

\subsubsection*{A review of the theory of $\aone$-fiber sequences}
There is a general definition of a fiber sequence in a model category \cite[\S 6.2]{Hovey}.  This definition generalizes the usual examples coming from Serre fibrations in topology.  We refer the reader to, e.g., \cite[\S 2.3, esp. Definition 2.9]{AsokPi1} for a concise development of these ideas in the context of the $\aone$-homotopy category.  Just like in classical topology, $\aone$-fiber sequences give rise to long exact sequences in $\aone$-homotopy sheaves (see, e.g., \cite[Lemma 2.10]{AsokPi1}).

The main result about fiber sequences we will use is summarized in the following statement, which is quoted from \cite[Proposition 5.1, Proposition 5.2, and Theorem 5.3]{WendtTorsor}; in any situation in this paper where a sequence of spaces is asserted to be an $\aone$-fiber sequence (and for which no auxiliary reference is given), the sequence has this property because of the following result.

\begin{thm}[Morel, Moser, Wendt]
\label{thm:torsor}
Assume $F$ is a field, and $(X,x)$ is a pointed smooth $F$-scheme.  If $P \to X$ is a $G$-torsor for $SL_n, GL_n$ or $Sp_{2n}$, then there is an $\aone$-fiber sequence of the form
\[
 G \longrightarrow P \longrightarrow X.
\]
If, moreover, $Y$ is a pointed smooth quasi-projective $F$-scheme equipped with a left action of $G$, then the associated fiber space, i.e. the quotient $P \times^G Y$, exists as a smooth scheme, and there is an $\aone$-fiber sequence of the form
\[
Y \longrightarrow P \times^G Y \longrightarrow X.
\]
\end{thm}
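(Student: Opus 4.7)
The plan is to prove the two assertions in sequence: first construct simplicial (Nisnevich) fiber sequences, then upgrade them to $\aone$-fiber sequences. For the first, I would use that $SL_n$, $GL_n$, and $Sp_{2n}$ are all ``special'' groups in the sense of Serre, so any $G$-torsor $P \to X$ is Zariski-locally (in particular Nisnevich-locally) trivial. A Zariski-locally trivial principal $G$-bundle gives rise, after simplicial fibrant replacement, to a Kan fibration of simplicial Nisnevich sheaves with fiber $G$, yielding a simplicial fiber sequence $G \to P \to X$ in $\hsnis$.

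For the associated bundle $P \times^G Y$: I would first argue that this quotient exists as a smooth $F$-scheme. Because $G$ acts on the quasi-projective scheme $Y$ and $P \to X$ admits a Zariski-local trivialization $\{U_i \to X\}$, the pullback of $P \times^G Y$ to $U_i$ is $U_i \times Y$, and gluing via the transition cocycle of $P$ (acting on $Y$) produces the scheme $P \times^G Y$ together with a smooth morphism to $X$ that is Zariski-locally trivial with fiber $Y$. Exactly the same argument as in the torsor case then yields a simplicial fiber sequence $Y \to P \times^G Y \to X$.

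The crucial and more delicate step is to upgrade these simplicial fiber sequences to $\aone$-fiber sequences, i.e.\ to check that applying the $\aone$-localization functor to the simplicial homotopy fiber recovers $G$ (respectively $Y$). The strategy here is to appeal to a recognition principle of Morel--Wendt: a simplicial fibration $E \to B$ that is locally trivial in the Nisnevich topology with fiber $F$ is an $\aone$-fiber sequence provided the local trivializations are compatible with $\aone$-localization in a suitable sense (e.g., the simplicial presheaf $B(-,G,F)$ built from the cocycle data has the property that its objectwise $\aone$-localization computes the $\aone$-localization of the total space). For the groups $SL_n$, $GL_n$, and $Sp_{2n}$, this compatibility is a consequence of Morel's $\aone$-representability results for torsors on smooth affine schemes: the \v{C}ech nerve of a Zariski trivialization gives a simplicial resolution of $X$ whose terms are products with affine opens, and the associated spectral sequence collapses after $\aone$-localization precisely because $H^1_{\Nis}(-,G)$ is $\aone$-invariant on smooth affines.

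The main obstacle is this last step, since Nisnevich-local simplicial fiber sequences are not in general $\aone$-fiber sequences; the argument genuinely uses the linearity of the groups involved. Once the upgrade is established, the long exact sequence of $\aone$-homotopy sheaves attached to an $\aone$-fiber sequence (cf.\ \cite[Lemma 2.10]{AsokPi1}) follows automatically, which is the form in which the theorem will be applied throughout the paper.
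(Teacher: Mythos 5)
The paper does not actually prove Theorem \ref{thm:torsor}: as the ``Comments on the proof'' make explicit, the statement is quoted verbatim from \cite[Propositions 5.1, 5.2, Theorem 5.3]{WendtTorsor} (with inputs from \cite{MField} and \cite{Moser}), and the commentary only addresses two points of attribution and hypothesis: (i) the case $G = SL_2$ requires Moser's thesis, since Morel's original argument for $SL_n$ covers only $n = 1$ or $n \geq 3$; and (ii) Wendt's published statement assumes $F$ infinite, but that hypothesis only enters via an extensibility result for $G$-torsors (Wendt's Theorem 3.1) which, for special groups, is known by Lindel without the infiniteness assumption. Your proposal instead attempts a reconstruction of the full proof, so it is taking a genuinely different (and far more ambitious) route than the paper, which treats the theorem as a black box.

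As a reconstruction, your outline has the right overall shape — a simplicial (Nisnevich) fiber sequence coming from Zariski-local triviality of torsors for special groups, followed by an $\aone$-localization upgrade — but the critical upgrade step is where the real content lies, and your treatment of it has a gap. The phrase ``provided the local trivializations are compatible with $\aone$-localization in a suitable sense'' is not an articulable criterion: Nisnevich-locally trivial simplicial fibrations are generically \emph{not} $\aone$-fiber sequences, and the actual argument in Morel--Wendt hinges on a precise theorem (essentially that $B_{\Nis} G$ is already $\aone$-local for these $G$, which in turn rests on homotopy invariance of $G$-torsors over smooth affine schemes), not on a collapsing spectral sequence attached to a \v{C}ech nerve. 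Your allusion to $\aone$-invariance of $H^1_{\Nis}(-,G)$ on smooth affines is pointing at the right input, but you neither state the recognition theorem precisely nor explain why it applies here, and you do not see the two subtleties the paper flags: the $SL_2$ case is \emph{not} covered by Morel's original argument (hence the need for Moser), and the finite-field case requires Lindel's extensibility theorem to replace Wendt's running hypothesis. A complete proof along your lines would need to supply the precise form of the $\aone$-fiber-sequence recognition criterion and verify its hypotheses for each of $SL_n$ (all $n$), $GL_n$, and $Sp_{2n}$ over an arbitrary field.
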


\begin{proof}[Comments on the proof.]
As regards attribution: Morel proved the above result for $SL_n$ or $GL_n$ ($n = 1$ or $n \geq 3$) in \cite{MField}, and Wendt extended his result to treat a rather general class of reductive groups; the case where $G = SL_2$ requires the results of Moser \cite{Moser}.  In \cite[Proposition 5.1]{WendtTorsor}, the existence of fiber sequences such as the above is stated under the apparently additional hypothesis that $F$ be infinite.  However, the assumption that $F$ is infinite is used by way of Theorem 3.1 of {\em ibid} to guarantee extensibility of $G$-torsors.  In the case of special groups, such as above, this kind of extensibility result is known (e.g., by results of Lindel for vector bundles) without the assumption that the base-field is infinite (see \cite[Remark 3.2]{WendtTorsor}).

Finally, assuming $Y$ is quasi-projective, the quotient $P \times^G Y$ exists as a smooth scheme.  Since $G$ is special (in the sense of Grothendieck-Serre), all $G$-torsors are Zariski (and hence Nisnevich) locally trivial.  Combining these two observations, the notation $P \times^G Y$ is unambiguous, i.e., the Nisnevich sheaf quotient coincides with the scheme quotient.
\end{proof}

\subsubsection*{The stable range}
We now apply the results on fiber sequences above to the $\aone$-fiber sequences
\[
\begin{split}
SL_{n} &\hooklongrightarrow SL_{n+1} \longrightarrow SL_{n+1}/SL_{n} \text{ and }\\
Sp_{2n} &\hooklongrightarrow Sp_{2n+2} \longrightarrow Sp_{2n+2}/Sp_{2n}.
\end{split}
\]
Each of these fiber sequences gives rise to a long exact sequence in $\aone$-homotopy sheaves.  The homogeneous spaces that appear in these fiber sequences are ``highly $\aone$-connected."  More precisely, we have the following result, whose proof we leave to the reader.

\begin{prop}
\label{prop:equivalence}
The ``projection onto the first column" morphism $SL_{n+1} \to {\mathbb A}^{n+1} \setminus 0$ (resp. $Sp_{2n+2} \to {\mathbb A}^{2n+2} \setminus 0$) factors through a morphism $SL_{n+1}/SL_{n} \to {\mathbb A}^{n+1} \setminus 0$ (resp. $Sp_{2n+2}/Sp_{2n} \to {\mathbb A}^{2n+2} \setminus 0$) that is Zariski locally trivial with affine space fibers; in particular, this morphism is an $\aone$-weak equivalence.
\end{prop}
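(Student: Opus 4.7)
The plan is to present each morphism as a Zariski-locally trivial map with affine space fibers over $\mathbb{A}^{n+1}\setminus 0$ (resp.\ $\mathbb{A}^{2n+2}\setminus 0$); the ``affine bundle implies $\aone$-weak equivalence'' conclusion is then immediate, since $\aone$-equivalence can be checked on a Zariski cover and each trivialization is a projection $U \times \mathbb{A}^m \to U$.

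First, I would establish the factorization. Under the block-diagonal embedding $SL_n \hookrightarrow SL_{n+1}$ sending $A$ to $\operatorname{diag}(1,A)$, right multiplication preserves the first column of any matrix (the first column of $\operatorname{diag}(1,A)$ being $e_1$), so $g \mapsto g\cdot e_1$ is constant on right $SL_n$-cosets. Analogously, with $Sp_{2n} \hookrightarrow Sp_{2n+2}$ embedded as the stabilizer of the symplectic plane spanned by $e_1$ and $f_1$, the first-column map factors through $Sp_{2n+2}/Sp_{2n}$.

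Next, I would identify the target as a homogeneous space. Since $G = SL_{n+1}$ (resp.\ $Sp_{2n+2}$) acts transitively on the nonzero vectors of its standard representation, there is an identification $\mathbb{A}^{n+1}\setminus 0 \cong G/P$ (resp.\ $\mathbb{A}^{2n+2}\setminus 0 \cong G/P$), where $P$ denotes the stabilizer of $e_1$. In the linear case $P$ is the maximal parabolic $SL_n \ltimes \ga^n$, with abelian unipotent radical $\ga^n$; in the symplectic case $P$ is a Siegel-type parabolic with Levi $Sp_{2n}$ and unipotent radical a Heisenberg group $U$, which as a $k$-scheme is isomorphic to $\ga^{2n+1}$. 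In both cases the factored morphism in the proposition is the natural quotient $G/H \to G/P$ with $H = SL_n$ or $Sp_{2n}$, whose fibers are $P/H \cong U$.

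Finally, over the principal opens $D(x_i)$ of $\mathbb{A}^N \setminus 0$ I would construct explicit sections of the first-column map $G \to \mathbb{A}^N \setminus 0$. For $SL_{n+1}$ one completes $v \in D(x_i)$ to a matrix $\sigma_i(v)$ with first column $v$, standard basis vectors in the remaining columns, and one entry rescaled by $x_i^{-1}$ to normalize the determinant. Since $P = U \cdot H$ with $U \cap H = \{e\}$, the map $(v,u) \mapsto \sigma_i(v)\cdot u \cdot H$ then trivializes $G/H$ over $D(x_i)$. The symplectic case is analogous, with $v$ completed to a symplectic basis. I expect the main nuisance to be the symplectic case: explicitly identifying the Siegel parabolic, verifying that its Heisenberg unipotent radical is isomorphic to affine space as a scheme, and bookkeeping the explicit Zariski-local sections of $Sp_{2n+2} \to \mathbb{A}^{2n+2}\setminus 0$; the linear-case matrix manipulations are routine.
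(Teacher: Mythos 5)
Your proposal is correct and is essentially the argument the authors leave to the reader (the paper states the result ``whose proof we leave to the reader'' and gives none). Factoring the first-column map through $G/H$ because right multiplication by $\operatorname{diag}(1,A)$ (resp.\ an element of the $Sp_{2n}$ stabilizing the first hyperbolic plane) fixes the first column, identifying $\mathbb{A}^N \setminus 0$ as $G/P$ with $P = \operatorname{Stab}(e_1)$, observing $P/H \cong U \cong \ga^m$, and trivializing over the principal opens $D(x_i)$ by explicit matrix completions is exactly the expected route.

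One small terminological slip that does not affect the argument: the stabilizer $P$ of the \emph{vector} $e_1$ is not in fact a parabolic subgroup (it does not contain a Borel, as one sees from the fact that $G/P \cong \mathbb{A}^N\setminus 0$ is not projective). Rather, $P$ is the kernel of the $\gm$-quotient of the genuine maximal parabolic $Q = \operatorname{Stab}(\langle e_1\rangle)$, i.e.\ $P = H \ltimes U$ where $U$ is the unipotent radical of $Q$ and $H$ the semisimple part of its Levi. Since the only structural fact you actually invoke is the semidirect decomposition $P = U \rtimes H$ with $U \cong \ga^n$ (resp.\ $U$ the rank-$n$ Heisenberg group, isomorphic as a scheme to $\ga^{2n+1}$), this has no bearing on the correctness of the proof.
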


\begin{thm}[{\cite[Theorem 6.40]{MField}}]
\label{thm:morelanminus0}
For any integer $n \geq 1$, the space ${\mathbb A}^{n+1} \setminus 0$ is $(n-1)$-$\aone$-connected, and there is a canonical isomorphism $\bpi_{n}^{\aone}({\mathbb A}^{n+1} \setminus 0) \cong \K^{\MW}_{n+1}$.
\end{thm}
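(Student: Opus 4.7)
The plan is to reduce the statement to a homotopy-theoretic computation for an explicit smash product of simplicial and motivic circles, and then to invoke Morel's $\aone$-Hurewicz theorem together with his identification of unramified Milnor-Witt K-theory with the $\aone$-homology of motivic spheres. The starting point is the standard pointed $\aone$-weak equivalence
\[
\mathbb{A}^{n+1} \setminus 0 \;\simeq\; S^n_s \wedge \gm^{\wedge (n+1)},
\]
proved via the \v{C}ech nerve of the affine cover $\{U_i = D(x_i)\}_{1 \leq i \leq n+1}$: each open $U_i$ is $\aone$-equivalent to $\gm$, the multi-intersections assemble into smash powers of $\gm$, and the homotopy colimit of the nerve is the $n$-fold simplicial reduced suspension of $\gm^{\wedge (n+1)}$ (cf.\ \cite[\S 3, Example 2.20]{MV}).

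Given this equivalence, the $(n-1)$-$\aone$-connectedness is essentially automatic: an $n$-fold simplicial suspension of any pointed simplicial sheaf is simplicially $(n-1)$-connected, so Morel's unstable $\aone$-connectivity theorem promotes this bound to the $\aone$-local category. For the identification of $\bpi_n^{\aone}$, I would split the argument by case. When $n \geq 2$, the space is simply $\aone$-connected, so the $\aone$-Hurewicz theorem gives a canonical isomorphism $\bpi_n^{\aone}(\mathbb{A}^{n+1}\setminus 0) \isomto H_n^{\aone}(\mathbb{A}^{n+1}\setminus 0)$. The suspension isomorphism in $\aone$-homology reduces the target to $H_0^{\aone}(\gm^{\wedge (n+1)})$, which one identifies with $\K^{\MW}_{n+1}$ using Morel's description of the free strictly $\aone$-invariant sheaf generated by $\gm^{\wedge m}$ (pointed at $1$). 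The case $n = 1$ is qualitatively different: $\mathbb{A}^2 \setminus 0$ is only $0$-$\aone$-connected and $\bpi_1^{\aone}$ is a priori non-abelian. Here one exploits the $\aone$-weak equivalence $SL_2 \simeq \mathbb{A}^2 \setminus 0$ coming from ``first column projection'' (Proposition \ref{prop:equivalence}), and computes $\bpi_1^{\aone}(SL_2)$ directly via a Matsumoto-style presentation, extracting $\K^{\MW}_2$ from the symbols arising in Steinberg-type relations over arbitrary finitely generated field extensions.

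The main obstacle is the identification of the $\aone$-homology sheaf (or, for $n=1$, the first non-stable $\aone$-homotopy sheaf of $SL_2$) with $\K^{\MW}_{n+1}$. This requires the full apparatus of Morel's theory of strictly $\aone$-invariant sheaves: unramifiedness, the Rost-Schmid / Gersten complex recalled above, and (for $n = 1$) a careful symbolic analysis of commutators in $SL_2$ to match them against the generators of Milnor-Witt K-theory. By contrast, the connectivity half of the theorem is an almost formal consequence of the smash decomposition and the unstable $\aone$-connectivity theorem, and the suspension isomorphism reduces the problem uniformly (for $n \geq 2$) to the base computation at $n = 0$.
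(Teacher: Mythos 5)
The paper does not actually prove this statement: it is quoted verbatim from Morel (\cite[Theorem~6.40]{MField}) and treated as a black box, so the only thing to compare against is Morel's own proof. Your reconstruction of the $n\geq 2$ case matches Morel: the smash decomposition $\mathbb{A}^{n+1}\setminus 0\simeq S^n_s\wedge\gm^{\wedge(n+1)}$, the unstable connectivity theorem, the $\aone$-Hurewicz theorem, and the identification of $H_0^{\aone}(\gm^{\wedge(n+1)})$ with the free strictly $\aone$-invariant sheaf on $\gm^{\wedge(n+1)}$, namely $\K^{MW}_{n+1}$.

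For $n=1$, however, your proposed route would not work as written. You suggest passing to $SL_2\simeq\mathbb{A}^2\setminus 0$ and computing $\bpi_1^{\aone}(SL_2)$ via a ``Matsumoto-style presentation.'' But in Morel's development (and in the present paper) the logic runs the other way: the computation $\bpi_1^{\aone}(SL_2)\cong\K^{MW}_2$ is a \emph{consequence} of Theorem~6.40 combined with the $\aone$-weak equivalence $SL_2\simeq\mathbb{A}^2\setminus 0$; there is no independent Matsumoto-type presentation of $\bpi_1^{\aone}(SL_2)$ valued in $\K^{MW}_2$ available as an input. Using it would be circular. What Morel actually does for this case is a $\pi_1$-Hurewicz argument: for a single simplicial suspension such as $S^1_s\wedge\gm^{\wedge 2}$, the sheaf $\bpi_1^{\aone}$ is the free \emph{strongly} $\aone$-invariant sheaf of groups generated by the pointed sheaf $\gm^{\wedge 2}$, and Morel proves separately (by an explicit analysis of the defining relations of Milnor-Witt K-theory) that this free object is abelian and coincides with $\K^{MW}_2$. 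That abelianity is a genuine theorem, not a formality; a single simplicial suspension need not have abelian $\pi_1$. So the $n=1$ case is where the real content lies, and your sketch substitutes for it a step that cannot be carried out independently.
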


Combining Theorem \ref{thm:morelanminus0} and Proposition \ref{prop:equivalence}, one immediately deduces the following result.

\begin{cor}[Morel, Wendt]
\label{cor:stabilization}
The morphisms $\bpi_i^{\aone}(SL_{n}) \to \bpi_i^{\aone}(SL_{n+1})$ are epimorphisms for $i \leq n-1$ and isomorphisms for $i \leq n-2$.  The morphisms $\bpi_i^{\aone}(Sp_{2n}) \to \bpi_i^{\aone}(Sp_{2n+2})$ are epimorphisms for $i \leq 2n$ and isomorphisms for $i \leq 2n-1$.
\end{cor}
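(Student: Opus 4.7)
The plan is to apply Theorem \ref{thm:torsor} to the standard closed embeddings $SL_{n} \hookrightarrow SL_{n+1}$ and $Sp_{2n} \hookrightarrow Sp_{2n+2}$, which realize $SL_{n+1}$ as an $SL_n$-torsor over $SL_{n+1}/SL_n$ and $Sp_{2n+2}$ as an $Sp_{2n}$-torsor over $Sp_{2n+2}/Sp_{2n}$. Theorem \ref{thm:torsor} then produces $\aone$-fiber sequences
\[
SL_n \longrightarrow SL_{n+1} \longrightarrow SL_{n+1}/SL_n \quad \text{and} \quad Sp_{2n} \longrightarrow Sp_{2n+2} \longrightarrow Sp_{2n+2}/Sp_{2n}.
\]

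Next, Proposition \ref{prop:equivalence} identifies the base spaces of these fiber sequences with ${\mathbb A}^{n+1}\setminus 0$ and ${\mathbb A}^{2n+2}\setminus 0$, respectively, up to $\aone$-weak equivalence. Theorem \ref{thm:morelanminus0} then says that $\bpi_i^{\aone}(SL_{n+1}/SL_n)$ vanishes for $i \leq n-1$, while $\bpi_i^{\aone}(Sp_{2n+2}/Sp_{2n})$ vanishes for $i \leq 2n$.

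To conclude, I would simply pass to the long exact sequences of $\aone$-homotopy sheaves attached to the two fiber sequences above. For the linear case this reads
\[
\bpi_{i+1}^{\aone}(SL_{n+1}/SL_n) \longrightarrow \bpi_i^{\aone}(SL_n) \longrightarrow \bpi_i^{\aone}(SL_{n+1}) \longrightarrow \bpi_i^{\aone}(SL_{n+1}/SL_n),
\]
and analogously for $Sp$. Exactness, combined with the connectivity range above, shows that the middle map is an epimorphism as soon as the right-hand term vanishes (i.e. $i \leq n-1$ in the $SL$-case and $i \leq 2n$ in the $Sp$-case) and an isomorphism as soon as both outer terms vanish (i.e. $i \leq n-2$, respectively $i \leq 2n-1$), which is precisely the stabilization range asserted.

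The substantive inputs — existence of the $\aone$-fiber sequences, identification of the quotients with punctured affine spaces, and the connectivity estimate for the latter — have all been established in the preceding statements, so no genuine obstacle remains. The only point that demands care is the off-by-one accounting of the stable range in the long exact sequence, which is entirely routine.
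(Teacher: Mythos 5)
Your proof is correct and takes exactly the same route the paper has in mind: the paper explicitly states that Corollary \ref{cor:stabilization} is deduced immediately by combining Proposition \ref{prop:equivalence} with Theorem \ref{thm:morelanminus0}, applied to the long exact sequences of $\aone$-homotopy sheaves coming from the fiber sequences furnished by Theorem \ref{thm:torsor}. Your accounting of the connectivity bounds and the off-by-one shift in the long exact sequence is accurate, so the argument goes through as you describe.
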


In particular, observe that the sheaves $\bpi_i^{\aone}(SL_n)$ coincide with the stable groups $\bpi_i^{\aone}(SL_{\infty})$ for $i \leq n-2$, and the sheaves $\bpi_i^{\aone}(Sp_{2n})$ coincide with $\bpi_i^{\aone}(Sp_{\infty})$ for $i \leq 2n-1$.  These observations allow us to define the stable range.

\begin{defn}
\label{defn:stablerange}
The sheaves $\bpi_i^{\aone}(SL_n)$ for $i \leq n-2$ and the sheaves $\bpi_i^{\aone}(Sp_{2n})$ for $i \leq 2n-1$ will be said to be {\em in the stable range}.
\end{defn}

\subsubsection*{$\aone$-homotopy sheaves of $Sp_{2n}$ in the stable range}
Using the stabilization results above, it is possible to deduce a stable range for $\aone$-homotopy sheaves of the symplectic group; the corresponding statements for the special linear group were recalled in \cite[Theorem 3.2]{AsokFaselSpheres}.  Given a sheaf of groups $G$, we write $BG$ for its simplicial classifying space in the sense of \cite[\S 4.1]{MV}.

\begin{thm}
\label{thm:stabilizationspn}
For any integers $i,n\in \mathbb{N}$ there are canonical isomorphisms
\[
\bpi_i^{\aone}(Sp_{2n}) \cong \bpi_{i+1}^{\aone}(BSp_{2n}).
\]
If $0 \leq i \leq 2n-1$ and, furthermore, the base field $k$ is assumed to have characteristic unequal to $2$, there are canonical isomorphisms of the form
\[
\bpi_{i+1}^{\aone}(BSp_{2n}) \cong \bpi_{i+1}^{\aone}(BSp_{\infty}) \cong \K^{Sp}_{i+1}.
\]
\end{thm}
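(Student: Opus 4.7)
The plan is to establish the three isomorphisms in turn, with each one relying on the previous step together with a distinct input.

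For the first isomorphism $\bpi_i^{\aone}(Sp_{2n}) \cong \bpi_{i+1}^{\aone}(BSp_{2n})$, I would apply Theorem \ref{thm:torsor} to the universal $Sp_{2n}$-torsor $ESp_{2n} \to BSp_{2n}$ arising from the standard simplicial bar construction. Since $Sp_{2n}$ is among the groups covered by Theorem \ref{thm:torsor}, this produces an $\aone$-fiber sequence
\[
Sp_{2n} \longrightarrow ESp_{2n} \longrightarrow BSp_{2n},
\]
and the total space $ESp_{2n}$ is simplicially (hence $\aone$-) contractible. The associated long exact sequence of $\aone$-homotopy sheaves then gives the claimed isomorphism for every $i \geq 0$; notice that no restriction on $i$ or on the characteristic is needed here.

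For the stabilization $\bpi_{i+1}^{\aone}(BSp_{2n}) \cong \bpi_{i+1}^{\aone}(BSp_\infty)$ in the range $0 \leq i \leq 2n-1$, I would simply combine the first isomorphism applied to $BSp_{2n}$ (and, after passing to the filtered colimit, to $BSp_\infty$) with the stable range statement in Corollary \ref{cor:stabilization}. The latter guarantees that $\bpi_i^{\aone}(Sp_{2n}) \to \bpi_i^{\aone}(Sp_{2n+2})$ is an isomorphism in the stated range, so passing to the colimit yields $\bpi_i^{\aone}(Sp_{2n}) \cong \bpi_i^{\aone}(Sp_\infty)$, and the first isomorphism transports this to the level of classifying spaces.

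Finally, for the identification $\bpi_{i+1}^{\aone}(BSp_\infty) \cong \K^{Sp}_{i+1}$, I would invoke the representability of Hermitian K-theory in $\aone$-homotopy theory: over a base in which $2$ is invertible, the infinite loop space $\Z \times BSp_\infty$ (after a suitable group completion / motivic plus construction) represents symplectic K-theory, by work of Hornbostel, Schlichting, and Schlichting--Tripathi. In particular, its $\aone$-homotopy sheaves in positive degrees are the Nisnevich sheafifications of the symplectic K-groups, and since $BSp_\infty$ is $\aone$-connected, this yields the required identification $\bpi_{i+1}^{\aone}(BSp_\infty) \cong \K^{Sp}_{i+1}$.

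The main obstacle is the third step: the representability of symplectic K-theory in the unstable $\aone$-homotopy category is the substantive input, and this is precisely where the hypothesis $\mathrm{char}(k) \neq 2$ enters (symmetric bilinear forms being well-behaved only when $2$ is invertible). The first two steps are essentially formal consequences of the fiber sequence technology of Theorem \ref{thm:torsor} and the stable range established in Corollary \ref{cor:stabilization}.
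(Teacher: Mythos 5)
Your overall route matches the paper's: first identify $\bpi_i^{\aone}(Sp_{2n})$ with $\bpi_{i+1}^{\aone}(BSp_{2n})$, then use the stabilization in Corollary~\ref{cor:stabilization} to pass to $BSp_\infty$, then invoke representability of symplectic $K$-theory.

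However, there is a genuine issue in your first step. You invoke Theorem~\ref{thm:torsor} to conclude that $Sp_{2n} \to ESp_{2n} \to BSp_{2n}$ is an $\aone$-fiber sequence, but Theorem~\ref{thm:torsor} requires the base $X$ to be a \emph{pointed smooth $F$-scheme}, and $BSp_{2n}$ is a simplicial sheaf, not a scheme. This is not a cosmetic mismatch: the assertion that the simplicial fiber sequence $G \to EG \to BG$ remains a fiber sequence after $\aone$-localization is a nontrivial claim and fails for general sheaves of groups $G$. The paper instead appeals to \cite[Proposition~8.11]{MField}, which treats the classifying-space case directly and whose hypothesis is met precisely because $Sp_{2n}$ is $\aone$-connected (so $\bpi_0^{\aone}(Sp_{2n})$ is trivially strongly $\aone$-invariant). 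Your argument nowhere records the $\aone$-connectedness of $Sp_{2n}$, which is the actual input that makes the first isomorphism work; as written, the justification is a misapplication of the cited theorem. Steps two and three are fine: the stabilization argument is the same as the paper's, and the paper backs the representability input with \cite[Theorem~8.2]{PaninWalter1} rather than your references, but either suffices, and you correctly identify this as the point where $\mathrm{char}(k)\neq 2$ enters.
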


\begin{proof}
Since $Sp_{2n}$ is $\aone$-connected for any $n\in\mathbb{N}$, we deduce from \cite[Proposition 8.11]{MField} that $\bpi_i^{\aone}(Sp_{2n})=\bpi_{i+1}^{\aone}(BSp_{2n})$ for any integer $i$.

By Corollary \ref{cor:stabilization}, there are isomorphisms $\bpi_i^{\aone}(Sp_{2n})=\bpi_i^{\aone}(Sp_\infty)$ if $0 \leq i \leq 2n-1$.  Similarly, there are isomorphisms $\bpi_i^{\aone}(Sp_\infty)=\bpi_{i+1}^{\aone}(BSp_\infty)$ and thus $\bpi_{i+1}^{\aone}(BSp_{2n})\cong \bpi_{i+1}^{\aone}(BSp_\infty)$ for $0 \leq i \leq 2n-1$. If $\mathrm{char}(k)\neq 2$, the space $\Z\times BSp_{\infty}$ represents symplectic $K$-theory in $\ho k$ (see for instance \cite[Theorem 8.2]{PaninWalter1}) and it follows that $\bpi_{i+1}^{\aone}(BSp_\infty)=\K^{Sp}_{i+1}$.
\end{proof}

\section{The first non-stable $\aone$-homotopy sheaf of $Sp_{2n}$}
\label{section:aonehomotopynonstable}
The main goal of this section is to describe the first non-stable $\aone$-homotopy sheaf for $Sp_{2n}$ and, as a consequence the ``next" non-stable $\aone$-homotopy sheaf of $GL_2$; this is achieved in Theorem \ref{thm:firstnonstablehomotopysheafofsp2n}.  By the results of Section \ref{section:aonehomotopystablerange}, the first non-stable sheaf is $\bpi_{2n}^{\aone}(Sp_{2n})$, and this sheaf is an extension of a symplectic K-theory sheaf (the stable part) by something ``non-stable".  Our goal here is to describe the non-stable part; we will see that it is an extension of a torsion sheaf by a quotient of a sheafification of a power of the fundamental ideal in the Witt ring.

To begin, observe that the long exact sequence in $\aone$-homotopy sheaves associated with the $\aone$-fiber sequence arising from the $Sp_{2n}$-torsor $Sp_{2n+2} \to Sp_{2n+2}/Sp_{2n}$ yields an exact sequence of the form
\[
\bpi_{2n+1}^{\aone}(Sp_{2n+2}) \stackrel{\varphi_{2n+2}}{\longrightarrow} \bpi_{2n+1}^{\aone}(Sp_{2n+2}/Sp_{2n}) \longrightarrow \bpi_{2n}^{\aone}(Sp_{2n}) \longrightarrow \bpi_{2n}^{\aone}(Sp_{2n+2}) \longrightarrow 0.
\]
The sheaves involving $Sp_{2n+2}$ are already in the stable range by Theorem \ref{thm:stabilizationspn}, and since there is an $\aone$-weak equivalence $Sp_{2n+2}/Sp_{2n} \longrightarrow {\mathbb A}^{2n+2} \setminus 0$, we obtain an exact sequence of the form
\[
\K^{Sp}_{2n+2} \stackrel{\varphi_{2n+2}}{\longrightarrow} \K^{MW}_{2n+2} \longrightarrow \bpi_{2n}^{\aone}(Sp_{2n}) \longrightarrow \K^{Sp}_{2n+1} \longrightarrow 0.
\]
If we set
\[
{\mathbf T}'_{2n+2} := \coker(\K^{Sp}_{2n+2} \stackrel{\varphi_{2n+2}}{\longrightarrow} \K^{MW}_{2n+2}),
\]
then our goal is to describe ${\mathbf T}'_{2n+2}$ explicitly.  To this end, recall from \cite[Proposition 2.6]{AsokFaselSpheres} that there is an exact sequence of sheaves of the form
\[
\xymatrix{0\ar[r] & \mathbf{I}^{2n+3}\ar[r] & \K_{2n+2}^{MW}\ar[r] & \K_{2n+2}^M\ar[r] & 0}
\]
where $\mathbf{I}^{2n+3}$ is the unramified sheaf associated with the $(2n+3)$th power of the fundamental ideal in the Witt ring (again, see \cite[\S 2.1]{Fasel09d}).

Composing $\varphi_{2n+2}$ with the epimorphism $\K^{MW}_{2n+2}\to \K^M_{2n+2}$, we get a morphism $\varphi^\prime_{2n+2}:\K^{Sp}_{2n+2}\longrightarrow \K^M_{2n+2}$ and we set
\[
{\mathbf S}'_{2n+2} := \coker(\K^{Sp}_{2n+2} \stackrel{\varphi^\prime_{2n+2}}{\longrightarrow} \K^{M}_{2n+2}).
\]
The proof of the following lemma is straightforward.

\begin{lem}
\label{lem:comparisonss}
There is an exact sequence of sheaves of the form
\[
\mathbf{I}^{2n+3}\longrightarrow {\mathbf T}'_{2n+2}\longrightarrow {\mathbf S}'_{2n+2}\longrightarrow 0.
\]
\end{lem}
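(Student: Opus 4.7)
The plan is to obtain the claimed exact sequence as the tail of the snake lemma applied to a diagram that compares $\varphi_{2n+2}$ with its reduction $\varphi'_{2n+2}$ through the canonical extension $0 \to \mathbf{I}^{2n+3} \to \K^{MW}_{2n+2} \to \K^M_{2n+2} \to 0$. The key observation is that the very definition of $\varphi'_{2n+2}$ as the composite $\K^{Sp}_{2n+2} \xrightarrow{\varphi_{2n+2}} \K^{MW}_{2n+2} \twoheadrightarrow \K^M_{2n+2}$ makes the relevant square commute automatically; the snake lemma then produces both cokernels $\mathbf{T}'_{2n+2}$ and $\mathbf{S}'_{2n+2}$ simultaneously and bridges them by a connecting map out of $\mathbf{I}^{2n+3}$.

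Concretely, I would consider the commutative diagram of Nisnevich sheaves of abelian groups
\[
\begin{array}{ccccccccc}
0 & \longrightarrow & 0 & \longrightarrow & \K^{Sp}_{2n+2} & \stackrel{=}{\longrightarrow} & \K^{Sp}_{2n+2} & \longrightarrow & 0 \\
 & & \downarrow & & {\scriptstyle \varphi_{2n+2}}\downarrow & & \downarrow{\scriptstyle \varphi'_{2n+2}} & & \\
0 & \longrightarrow & \mathbf{I}^{2n+3} & \longrightarrow & \K^{MW}_{2n+2} & \longrightarrow & \K^M_{2n+2} & \longrightarrow & 0,
\end{array}
\]
whose bottom row is the short exact sequence quoted from \cite[Proposition 2.6]{AsokFaselSpheres}. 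The rows are exact by inspection and the right square commutes by the very definition of $\varphi'_{2n+2}$; the left square commutes trivially. Applying the snake lemma yields a six-term exact sequence
\[
0 \longrightarrow \ker(\varphi_{2n+2}) \longrightarrow \ker(\varphi'_{2n+2}) \longrightarrow \mathbf{I}^{2n+3} \longrightarrow \mathbf{T}'_{2n+2} \longrightarrow \mathbf{S}'_{2n+2} \longrightarrow 0,
\]
of which the asserted sequence is simply the last three nontrivial terms.

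There is no real obstacle here: the argument is formal once one writes down the diagram, and the category of Nisnevich sheaves of abelian groups on $\Sm_k$ is abelian so the snake lemma applies verbatim. If anything needs to be double-checked it is only that $\varphi'_{2n+2}$ has been defined exactly so as to make the right square commute, which is immediate from its construction as the composite of $\varphi_{2n+2}$ with the surjection $\K^{MW}_{2n+2} \twoheadrightarrow \K^M_{2n+2}$.
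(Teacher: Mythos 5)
Your proof is correct and is surely the argument the authors have in mind when they call the lemma ``straightforward'': the definitions of $\mathbf{T}'_{2n+2}$ and $\mathbf{S}'_{2n+2}$ as cokernels of $\varphi_{2n+2}$ and of its composite $\varphi'_{2n+2}$ with $\K^{MW}_{2n+2}\twoheadrightarrow\K^M_{2n+2}$ set up exactly the commutative ladder you write, and the snake lemma in the abelian category of Nisnevich sheaves of abelian groups delivers the three-term tail $\mathbf{I}^{2n+3}\to\mathbf{T}'_{2n+2}\to\mathbf{S}'_{2n+2}\to 0$.
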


We will see in Section \ref{section:complexrealization} that the morphism $\mathbf{I}^{2n+3}\longrightarrow {\mathbf T}'_{2n+2}$ is, in general, non-trivial. For now, we turn to the task of describing ${\mathbf S}'_{2n+2}$.

The standard inclusion $Sp_{2n+2}\to SL_{2n+2}$ induces a commutative diagram
\[
\xymatrix{Sp_{2n+2}\ar[r]\ar[d] & Sp_{2n+2}/Sp_{2n}\ar[d] \\
SL_{2n+2}\ar[r] & SL_{2n+2}/SL_{2n+1}}
\]
and it is straightforward to check that the right-hand vertical map is an isomorphism.  Moreover, both $Sp_{2n+2}/Sp_{2n}$ and $SL_{2n+2}/SL_{2n+1}$ are $\aone$-weakly equivalent to $\mathbb{A}^{2n+2}\setminus 0$ by Proposition \ref{prop:equivalence}, and the commutative diagram of sheaves obtained by applying $\bpi_{2n+1}^{\aone}(\_)$ to the above diagram takes the form:
\[
\xymatrix{\K_{2n+2}^{Sp}\ar[r]^-{\varphi_{2n+2}}\ar[d] & \K_{2n+2}^{MW}\ar@{=}[d] \\
\bpi_{2n+1}^{\aone}(SL_{2n+2})\ar[r] & \K_{2n+2}^{MW}.}
\]
By \cite[Diagram 3.2]{AsokFaselSpheres}, the map $\bpi_{2n+1}^{\aone}(SL_{2n+2})\to \K_{2n+2}^{MW}$ fits in a commutative diagram
\[
\xymatrix{\bpi_{2n+1}^{\aone}(SL_{2n+2})\ar[r]\ar[d]& \K_{2n+2}^{MW}\ar[d] \\
\K_{2n+2}^Q\ar[r]_-{\psi_{2n+2}} & \K_{2n+2}^M}
\]
where the left vertical arrow is the stabilization map (i.e., the morphism on $\aone$-homotopy sheaves induced by $SL_{2n+2}\to SL_{\infty}$), the right vertical arrow is the canonical epimorphism and $\psi_{2n+2}$ is the sheafified version of a homomorphism originally defined by Suslin in \cite[\S 4]{Suslin82b}.

Combining the two diagrams above, we obtain a diagram of the form
\[
\xymatrix{\K_{2n+2}^{Sp}\ar[r]^-{\varphi_{2n+2}}\ar[d]_-{f_{2n+2,2}} \ar[rd]^-{\varphi^\prime_{2n+2}} & \K_{2n+2}^{MW}\ar[d] \\
\K_{2n+2}^Q\ar[r]_-{\psi_{2n+2}} & \K_{2n+2}^M},
\]
where $f_{2n+2,2}$ is the forgetful homomorphism (which is itself induced by the natural map $BSp_{\infty}\to BSL_{\infty}$; see Section \ref{section:gwgroups} for more information). The image of $\psi_{2n+2}$ is hard to control in general; however, the natural homomorphism from Milnor $K$-theory to Quillen $K$-theory can be sheafified to obtain a morphism of sheaves $\mu_{2n+2}:\K_{2n+2}^M\to \K_{2n+2}^Q$ and the composite
\[
\xymatrix{\K_{2n+2}^M\ar[r]^-{\mu_{2n+2}} & \K_{2n+2}^Q\ar[r]^-{\psi_{2n+2}} & \K_{2n+2}^M}
\]
is multiplication by $(2n+1)!$ (\cite[Lemma 3.10]{AsokFaselSpheres}). Thus $(2n+1)!\K_{2n+2}^M\subset\mathrm{Im}(\psi_{2n+2})$.  As in \cite[Definition 3.6]{AsokFaselSpheres}, we write $\mathbf{S}_{2n+2}$ for the cokernel of $\psi_{2n+2}$.

\begin{prop}\label{prop:commutativeepimorphisms}
There is a commutative diagram of epimorphisms of sheaves of the form
\[
\xymatrix{\K^M_{2n+2}/2(2n+1)!\ar[r]\ar[d] & \mathbf{S}^\prime_{2n+2}\ar[d] \\
\K^M_{2n+2}/(2n+1)!\ar[r] & \mathbf{S}_{2n+2}.}
\]
\end{prop}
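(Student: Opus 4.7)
The plan is to construct each arrow in the square from maps appearing in the diagram immediately preceding the proposition, and then to observe commutativity.

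First, I would establish the right vertical arrow. Since $\varphi'_{2n+2}=\psi_{2n+2}\circ f_{2n+2,2}$, the image of $\varphi'_{2n+2}$ is contained in that of $\psi_{2n+2}$, and passage to cokernels produces a canonical epimorphism $\mathbf{S}'_{2n+2}\twoheadrightarrow \mathbf{S}_{2n+2}$. The bottom horizontal epimorphism $\K^M_{2n+2}/(2n+1)!\twoheadrightarrow \mathbf{S}_{2n+2}$ is already provided by the inclusion $(2n+1)!\K^M_{2n+2}\subseteq \mathrm{Im}(\psi_{2n+2})$ noted just before the proposition.

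To obtain the top horizontal epimorphism it suffices to show $2(2n+1)!\cdot \K^M_{2n+2}\subseteq \mathrm{Im}(\varphi'_{2n+2})$. For this I would use a sheaf-level hyperbolic map $H_{2n+2}\colon \K^Q_{2n+2}\to \K^{Sp}_{2n+2}$, obtained by sheafifying the classical hyperbolic construction in hermitian K-theory (in the spirit of the Schlichting-style arguments already underlying Lemma \ref{lem:multiplicativeaction}). The crucial compatibility is $f_{2n+2,2}\circ H_{2n+2}=1+T$, where $T$ denotes the involution on $\K^Q$ induced by duality. Since $T$ acts as $(-1)^n$ on the image of $\mu_n$, in the even degree $2n+2$ it acts trivially, so $f_{2n+2,2}\circ H_{2n+2}\circ \mu_{2n+2}=2\mu_{2n+2}$. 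Combining this with \cite[Lemma 3.10]{AsokFaselSpheres} I obtain
\[
\varphi'_{2n+2}\circ H_{2n+2}\circ \mu_{2n+2}=\psi_{2n+2}\circ(2\mu_{2n+2})=2(2n+1)!\cdot \mathrm{id}_{\K^M_{2n+2}},
\]
establishing the desired inclusion and hence an epimorphism $\K^M_{2n+2}/2(2n+1)!\twoheadrightarrow \mathbf{S}'_{2n+2}$.

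The left vertical arrow is the canonical quotient $\K^M_{2n+2}/2(2n+1)!\twoheadrightarrow \K^M_{2n+2}/(2n+1)!$, and commutativity of the square is automatic since all four maps between the various quotients of $\K^M_{2n+2}$ are induced by the identity on $\K^M_{2n+2}$. The step I expect to require the most care is the identification of the hyperbolic-forgetful composition $f_{2n+2,2}\circ H_{2n+2}$ at the level of Nisnevich sheaves and the verification that it restricts to multiplication by $2$ on $\mu_{2n+2}(\K^M_{2n+2})$; both assertions are classical in hermitian K-theory but need to be recorded in the sheaf-theoretic setting used here.
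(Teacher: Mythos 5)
Your proposal is correct and takes essentially the same route as the paper: the paper likewise constructs the sheaf-level hyperbolic map $H_{2n+2,2}:\K^Q_{2n+2}\to\K^{Sp}_{2n+2}$ from the block embedding $GL_n\hookrightarrow Sp_{2n}$, applies Lemma \ref{lem:computationfH} to identify $f_{2n+2,2}\circ H_{2n+2,2}\circ\mu_{2n+2}$ with multiplication by $2$, and composes with Suslin's factor $(2n+1)!$ to produce the top epimorphism, with commutativity of the square being automatic. The only minor imprecision in your writeup is that the general formula reads $f_{i,j}\circ H_{i,j}\circ\mu_i=(1+(-1)^{i+j})\mu_i$ (the involution picks up a sign from the duality shift $j$ as well as from the degree), but since $j=2$ is even this agrees with your $1+T$ with $T$ acting by $(-1)^i$, so the argument goes through unchanged.
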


\begin{proof}
The maps $GL_n\to Sp_{2n}$ defined by
\[
M\mapsto \begin{pmatrix} M & 0 \\ 0 & (M^{-1})^t\end{pmatrix}
\]
induce a map $BGL_{\infty}\to BSp_{\infty}$ and thus, by taking $\aone$-homotopy sheaves, a morphism of sheaves $H_{2n+2,2}:\K^Q_{2n+2}\to \K_{2n+2}^{Sp}$.

We claim the following diagram commutes:
\[
\xymatrix@C=3em{\K^M_{2n+2}\ar[r]^-{\mu_{2n+2}}\ar@{=}[d] & \K^Q_{2n+2}\ar[r]^-{H_{2n+2,2}}\ar@{=}[d] & \K_{2n+2}^{Sp}\ar[r]^-{\varphi^\prime_{2n+2}}\ar[d]^-{f_{2n+2,2}} & \K^M_{2n+2}\ar@{=}[d] \\
\K^M_{2n+2}\ar[r]_-{\mu_{2n+2}} & \K^Q_{2n+2}\ar[r]_-2 & \K_{2n+2}^{Q}\ar[r]_-{\psi_{2n+2}} & \K^M_{2n+2}.}
\]
Indeed, the middle square commutes by Lemma \ref{lem:computationfH}, while the right-hand square commutes by the discussion preceding the statement of the proposition.   The composite of the bottom maps is equal to multiplication by $2(2n+1)!$, and we obtain the required epimorphism of sheaves $\K^M_{2n+2}/2(2n+1)!\to \mathbf{S}^\prime_{2n+2}$.  The diagram
\[
\xymatrix{\K^M_{2n+2}/2(2n+1)!\ar[r]\ar[d] & \mathbf{S}^\prime_{2n+2}\ar[d] \\
\K^M_{2n+2}/(2n+1)!\ar[r] & \mathbf{S}_{2n+2}}
\]
commutes by construction, and the result follows from Lemma \ref{lem:comparisonss}.
\end{proof}

For convenient reference, we summarize the above results in the following statement.

\begin{thm}
\label{thm:firstnonstablehomotopysheafofsp2n}
There are exact sequences of the form
\[
\begin{split}
0 \longrightarrow {\mathbf T}'_{2n+2} \longrightarrow & \bpi_{2n}^{\aone}(Sp_{2n}) \longrightarrow \K^{Sp}_{2n+1} \longrightarrow 0, \text{ and } \\
0 \longrightarrow \mathbf{D}_{2n+3} \longrightarrow & \mathbf{T}'_{2n+2} \longrightarrow \mathbf{S}'_{2n+2} \longrightarrow 0,
\end{split}
\]
where ${\mathbf T}'_{2n+2}$ and ${\mathbf S}'_{2n+2}$ are defined above, and ${\mathbf S}'_{2n+2}$ is a quotient of $\K^M_{2n+2}/(2(2n+1)!)$, and $\mathbf{D}_{2n+3}$ is a quotient of $\mathbf{I}^{2n+3}$.
\end{thm}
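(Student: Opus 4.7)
The statement is a consolidation of several facts already derived in this section, so the plan is primarily one of assembly: I need to identify each piece of the theorem with a previously established step and then glue them together.

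For the first short exact sequence, my starting point is the four-term sequence
\[
\K^{Sp}_{2n+2} \stackrel{\varphi_{2n+2}}{\longrightarrow} \K^{MW}_{2n+2} \longrightarrow \bpi_{2n}^{\aone}(Sp_{2n}) \longrightarrow \K^{Sp}_{2n+1} \longrightarrow 0
\]
that was derived at the top of the section. Recall that this came from applying the long exact sequence in $\aone$-homotopy sheaves to the $\aone$-fiber sequence $Sp_{2n} \to Sp_{2n+2} \to Sp_{2n+2}/Sp_{2n}$ provided by Theorem \ref{thm:torsor}, then combining the stabilization range from Corollary \ref{cor:stabilization} and the symplectic K-theory identification of Theorem \ref{thm:stabilizationspn} (for the outer $Sp_{2n+2}$-terms) with the $\aone$-equivalence $Sp_{2n+2}/Sp_{2n} \simeq \mathbb{A}^{2n+2}\setminus 0$ of Proposition \ref{prop:equivalence} and Morel's computation $\bpi_{2n+1}^{\aone}(\mathbb{A}^{2n+2}\setminus 0) \cong \K^{MW}_{2n+2}$ from Theorem \ref{thm:morelanminus0}. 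Since $\mathbf{T}'_{2n+2}$ is by definition the cokernel of $\varphi_{2n+2}$, the first displayed exact sequence of the theorem is immediate.

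For the second short exact sequence, I would take $\mathbf{D}_{2n+3}$ to be the image of the canonical morphism $\mathbf{I}^{2n+3} \to \mathbf{T}'_{2n+2}$ constructed in Lemma \ref{lem:comparisonss}. By the exactness of that lemma, the sequence
\[
0 \longrightarrow \mathbf{D}_{2n+3} \longrightarrow \mathbf{T}'_{2n+2} \longrightarrow \mathbf{S}'_{2n+2} \longrightarrow 0
\]
is exact, and by its very definition $\mathbf{D}_{2n+3}$ is a quotient of $\mathbf{I}^{2n+3}$. The remaining assertion that $\mathbf{S}'_{2n+2}$ is a quotient of $\K^M_{2n+2}/(2(2n+1)!)$ is precisely the top horizontal arrow of the commutative square constructed in Proposition \ref{prop:commutativeepimorphisms}.

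I do not anticipate any real obstacle: every ingredient has already been proved, and the only conceptual choice is to replace $\mathbf{I}^{2n+3}$ with its image $\mathbf{D}_{2n+3}$ in order to upgrade the four-term sequence of Lemma \ref{lem:comparisonss} to a short exact sequence. The natural follow-up question, deferred to Section \ref{section:complexrealization}, is whether the morphism $\mathbf{I}^{2n+3} \to \mathbf{T}'_{2n+2}$ is injective, i.e.\ whether $\mathbf{D}_{2n+3} = \mathbf{I}^{2n+3}$; but that is not needed for the statement being proved here.
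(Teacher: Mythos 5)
Your proposal is correct and takes essentially the same route as the paper: the theorem is explicitly a summary of the preceding work, assembled exactly as you describe---the four-term sequence from the $\aone$-fiber sequence combined with the definition of $\mathbf{T}'_{2n+2}$ as $\coker(\varphi_{2n+2})$ gives the first short exact sequence, and replacing $\mathbf{I}^{2n+3}$ by its image $\mathbf{D}_{2n+3}$ in the four-term sequence of Lemma \ref{lem:comparisonss}, together with the epimorphism of Proposition \ref{prop:commutativeepimorphisms}, gives the second.
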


\begin{rem}
In Section \ref{section:complexrealization}, we will complement the result above by establishing Lemmas \ref{lem:contractionsofsprimeneven} and \ref{lem:contractionsofsprimenodd}.  The second result shows that, for $n$ odd, after repeated contraction the epimorphisms $\K^M_{2n+2}/(2(2n+1)!) \to \mathbf{S}'_{2n+2}$ are isomorphisms, while the first result establishes a corresponding result for $n$ even (the statement is a just a bit more complicated). We will also show in Lemma \ref{lem:d2nplus3isnontrivial} that $\mathbf{D}_{2n+3}$ cannot be the zero sheaf for $n$ odd.
\end{rem}

\section{Grothendieck-Witt groups}
\label{section:gwgroups}
In this section, we begin by recalling some basic facts about Grothendieck-Witt groups, which are a Waldhausen-style version of hermitian $K$-theory.  The general reference for the material of this section is the work of M. Schlichting \cite{Schlichting09,Schlichting10}.  The main results are Lemma \ref{lem:computationfH}, which was used in the proof of Proposition \ref{prop:commutativeepimorphisms}, and Theorem \ref{thm:cohomologyofksp3}.  We also spend some time discussing the natural action of $\gm$ on contractions of Grothendieck-Witt sheaves, which provides a technical ingredient in the proof of Theorem \ref{thmintro:mainclassification}.

\subsection*{Definitions}
Let $X$ be a smooth scheme with $2\in \O_X(X)^\times$ (we keep these assumptions throughout the section, though it is not necessary for some of the arguments).  For every such $X$, and any line bundle $\L$ on $X$, one has an {\em exact category with weak equivalences and (strong) duality} $(\mathcal{C}^b(X),qis,\sharp_{\L},\varpi_{\L})$ in the sense of \cite[\S 2.3]{Schlichting10}.  Here, $\mathcal{C}^b(X)$ is the category of bounded complexes of locally free coherent sheaves on $X$, weak equivalences are given by quasi-isomorphisms of complexes, $\sharp_{\L}$ is the duality functor induced by the functor $\hom_{\O_X}(\_,\L)$, and the natural transformation $\varpi_{\L}:1\to \sharp_{\L}\sharp_{\L}$ is induced by the canonical identification of a locally free sheaf with its double dual.  The (left) translation (or shift) functor $T:\mathcal{C}^b(R)\to \mathcal{C}^b(R)$ yields new dualities $\sharp^n_{\L}:=T^n\circ \sharp_{\L}$ and canonical isomorphisms $\varpi^n_{\L}:=(-1)^{n(n+1)/2}\varpi_{\L}$.

Associated with an exact category with weak equivalences and (strong) duality is a Grothendieck-Witt space and higher Grothendieck-Witt groups, obtained as homotopy groups of the Grothendieck-Witt space \cite[\S 2.11]{Schlichting10}.  We write $\mathcal{GW}(\mathcal{C}^b(X),qis,\sharp^j_{\L},\varpi^j_{\L})$ for the Grothendieck-Witt space of the example described in the previous paragraph.

\begin{defn}
\label{defn:gwgroups}
For $i\geq 0$, we denote by $GW^j_i(X,{\L})$ the group $\pi_i\mathcal{GW}(\mathcal{C}^b(X),qis,\sharp^j_{\L},\varpi^j_{\L})$. If ${\L}=\O_X$, we write $GW^j_i(X)$ for $GW^j_i(X,\O_X)$.
\end{defn}

Later, we will also use ``negative Grothendieck-Witt groups:" Schlichting constructs a spectrum $\mathbb{G}W(\mathcal{C}^b(X),qis,\sharp^j_{\L},\varpi^j_{\L})$ \cite[\S 10]{Schlichting10}, and the negative Grothendieck-Witt groups are defined as $GW_{-i}^j(X,{\L}):=\pi_{-i}\mathbb{G}W(\mathcal{C}^b(X),qis,\sharp^j_{\L},\varpi^j_{\L})$ for $i>0$.

For any $j\in\Z$, the group $GW_0^j(X,{\L})$ coincides with the Grothendieck-Witt group defined by Balmer-Walter of the triangulated category $D^b(X)$ of bounded complexes of coherent locally free $\O_X$-modules endowed with the corresponding duality (\cite[Lemma 8.2]{Schlichting08pre}, \cite[Theorem 5.1]{Walter03}), and negative Grothendieck-Witt groups coincide with triangular Witt groups as defined by P. Balmer (see, e.g., \cite{Balmer05b}) under the formula $GW_{-i}^j(X,{\L})=W^{i+j}(X,{\L})$; these identifications will be used when we study ``contractions" in Proposition \ref{prop:contractionofgw}.

The Grothendieck-Witt groups defined above coincide with hermitian $K$-theory as defined by M. Karoubi \cite{Karoubi,Karoubi80} in the case of affine schemes, at least when $2$ is invertible (see \cite[Remark 4.16]{Schlichting09}, see also \cite{Horn}).  In particular, given a smooth $k$-algebra $R$ we have by \cite[Corollary A.2]{Schlichting08pre} the identifications
\[
\begin{split}
GW_i^0(R) &= K_iO(R) \\
GW_i^2(R) &= K_iSp(R).
\end{split}
\]
There are identifications $GW_i^1(R)={}_{-1}U_i(R)$ and $GW_i^3(R)=U_i(R)$, where the groups $U_i(R)$ and ${}_{-1}U_i(R)$ are Karoubi's $U$-groups, and $GW_i^n$ is $4$-periodic in $n$.

One can compare Quillen $K$-theory with higher Grothendieck-Witt groups using the hyperbolic morphisms $H_{i,j}:K_i(X)\to GW_i^j(X,{\L})$ and forgetful morphisms $f_{i,j}:GW_i^j(X,{\L})\to K_i(X)$ defined for any $i,j\in \mathbb{N}$ and any line bundle ${\L}$ over $X$ (note the indexing!). The hyperbolic and forgetful morphisms are connected by means of the \emph{Karoubi periodicity} exact sequences (\cite[Theorem 6.1]{Schlichting08pre})
\[
\xymatrix@C=1.71em{\ldots\ar[r] & K_i(X)\ar[r]^-{H_{i,j}} & GW_i^j(X,{\L})\ar[r]^-{\eta} & GW_{i-1}^{j-1}(X,{\L})\ar[r]^-{f_{i-1,j-1}} & K_{i-1}(X)\ar[r]^-{H_{i-1,j}} & GW_{i-1}^j(X,{\L})\ar[r] & \ldots},
\]
where $\eta$ is multiplication by a distinguished element in $GW_{-1}^{-1}(k)$.

The composition $f_{i,j}\circ H_{i,j}$ is, in general, difficult to understand, but the situation is slightly better when we take $X$ to be (the spectrum of) a field.  For any field $F$, the identification $\mu_1:K^M_1(F)\to K^Q_1(F)$, induces via cup-product a (functorial in $F$) homomorphism $\mu_i:K^M_i(F)\to K^Q_i(F)$.

\begin{lem}\label{lem:computationfH}
For any field $F$ having characteristic unequal $2$, and for any integers $i,j \geq 0$, the following diagram commutes:
\[
\xymatrix@C=6em{K^Q_i(F)\ar[r]^-{f_{i,j}\circ H_{i,j}} & K^Q_i(F) \\
K^M_i(F)\ar[r]_-{(1+(-1)^{i+j})Id}\ar[u]^-{\mu_i} & K^M_i(F).\ar[u]_-{\mu_i}}
\]
\end{lem}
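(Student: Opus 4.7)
The plan is to rewrite the composition $f_{i,j}\circ H_{i,j}$ on $K^Q_i(F)$ as $\mathrm{id}+(-1)^j\sigma^*$ for a universal involution $\sigma^*$ on Quillen $K$-theory, and then to evaluate $\sigma^*$ on the image of $\mu_i$ using the multiplicative structure. The commutativity of the diagram will then reduce to a sign count.

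Step 1. I would first establish that, for every $i,j\geq 0$,
\[
f_{i,j}\circ H_{i,j}=\mathrm{id}+(-1)^j\sigma^* \quad\text{on }K^Q_i(F),
\]
where $\sigma^*$ is the involution on $K^Q_*(F)$ induced by the duality functor $P\mapsto P^\vee=\hom_F(P,F)$. This is derived by unwinding Schlichting's description of the hyperbolic functor for the shifted duality $\sharp^j=T^j\sharp$: an automorphism $\phi$ of $P$ is sent by $H$ to the hyperbolic automorphism $\phi\oplus(\phi^{\sharp^j})^{-1}$ of $P\oplus \sharp^j P$, so that applying $f$ yields $[\phi]+[(\phi^{\sharp^j})^{-1}]=[\phi]-[\phi^{\sharp^j}]$ in $K^Q_i(F)$. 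Since the translation functor $T$ acts as $-1$ on Quillen $K$-theory, one has $[\phi^{\sharp^j}]=(-1)^j[\phi^\vee]$, and since $\sigma^*[\phi]=[(\phi^\vee)^{-1}]=-[\phi^\vee]$, the expression rearranges to $[\phi]+(-1)^j\sigma^*[\phi]$. The case $i=0$ is checked directly from $[P\oplus \sharp^j P]=[P]+(-1)^j[P^\vee]$ in $K_0(F)$.

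Step 2. I would then show that $\sigma^*$ acts as multiplication by $(-1)^i$ on $\mu_i(K^M_i(F))$. Because $\sigma^*$ is a graded ring endomorphism of $K^Q_*(F)$ and $\mu_i$ is the $i$-fold cup product of $\mu_1$, this reduces to the case $i=1$. For $a\in F^*$, the dual of multiplication-by-$a$ on $F$ is again multiplication-by-$a$, so $\sigma^*\mu_1(a)=[a^{-1}]=-\mu_1(a)$ in $K^Q_1(F)=F^*$, and multiplicativity gives $\sigma^*\circ\mu_i=(-1)^i\mu_i$ in general. Combining the two steps, for any $x\in K^M_i(F)$,
\[
(f_{i,j}\circ H_{i,j})(\mu_i(x))=\mu_i(x)+(-1)^j\sigma^*(\mu_i(x))=(1+(-1)^{i+j})\mu_i(x),
\]
which is precisely the commutativity of the displayed diagram.

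The main obstacle will be Step 1: the formula $f\circ H=\mathrm{id}+(-1)^j\sigma^*$ is classical in Karoubi's hermitian $K$-theory at $i=0,1$, but verifying it in Schlichting's Waldhausen-style higher Grothendieck--Witt theory with shifted duality $\sharp^j=T^j\sharp$ requires careful bookkeeping of signs coming from the translation functor, particularly at the level of the higher automorphism groups appearing in the definition of $K_i$. Once this identity is secured, Steps 2 and 3 are formal and reduce the lemma to arithmetic with signs.
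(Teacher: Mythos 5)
Your proposal follows essentially the same route as the paper's proof: both reduce to showing $f_{i,j}\circ H_{i,j}=\mathrm{id}+(-1)^{j}\sigma^{*}$ on Quillen $K$-theory (the paper writes $\tau$ for your $\sigma^{*}$) and then evaluate the duality involution on $\mu_i(K^M_i(F))$ by reducing to $K_1$ via multiplicativity. The ``main obstacle'' you flag in Step 1, however, is not handled in the paper by sign bookkeeping at the level of higher automorphism groups; rather, after observing that the endofunctor $fH$ of $\mathcal{C}^b(F)$ is naturally isomorphic to $X\mapsto X\oplus X^{\sharp}[j]$ (a direct sum of the identity and a shifted dual), the paper invokes Waldhausen's additivity theorem to deduce at once that $fH$ induces $\mathrm{id}+(-1)^{j}\sigma^{*}$ on $K$-theory. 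You should appeal to additivity here rather than attempt the direct computation in higher $K$-groups, which would amount to reproving a special case of that theorem.
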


\begin{proof}
Let $(\mathcal E,\omega,\sharp,\eta)$ be an exact category with weak-equivalences and duality in the sense of \cite[\S 2.3]{Schlichting10}. With any exact category with weak-equivalences, one can associate the hyperbolic category $(\mathcal {HE},\omega)$ \cite[\S 2.15]{Schlichting10}. Its objects are pairs $(X,Y)$ of objects of $\mathcal C$, a morphism $(X,Y)\to (X^\prime,Y^\prime)$ is a pair $(a,b)$ of morphisms of $\mathcal C$ with $a:X\to X^\prime$ and $b:Y^\prime\to Y$. Such a morphism is a weak-equivalence if $a$ and $b$ are. The switch $(X,Y)\mapsto (Y,X)$ yields a duality $^\star$ on $\mathcal {HE}$ and there is an obvious identification $id:1\to ^{\star\star}$. Thus $(\mathcal {HE},\omega,^\star,id)$ is an exact category with weak-equivalences and duality. The Grothendieck-Witt space $\mathcal{GW}(\mathcal {HE},\omega,^\star,id)$ is naturally homotopic to the $K$-theory space $\mathcal K(\mathcal E,\omega)$ \cite[Proposition 2.17]{Schlichting10}. In this context, the forgetful functor $f$ reads as $f(X)=(X,X^\sharp)$ for any $X$ in $\mathcal E$. On the other hand the hyperbolic functor $H:\mathcal {HE}\to \mathcal E$ is defined by $H(X,Y)=X\oplus Y^\sharp$ \cite[\S 3.9]{Schlichting09}. The composition $fH:(\mathcal {HE},\omega,^\star,id)\to (\mathcal {HE},\omega,^\star,id)$ is then given by $fH(X,Y)=(X\oplus Y^\sharp, X^\sharp\oplus Y^{\sharp\sharp})$. In particular, this composition is the same for $(\mathcal E,\omega,\sharp,\eta)$ and $(\mathcal E,\omega,\sharp,-\eta)$.

We now specialize to the case of $(\mathcal{C}^b(F),qis,\sharp^j,\varpi^j)$.  By the discussion of the previous paragraph, the map $fH$ on $K$-theory sends an object $X$ of $\mathcal{C}^b(F)$ to $X \oplus X^{\sharp}[j]$.  By the additivity theorem (see, e.g., \cite[Chapter V.1.2]{Weibel}) the induced map $K(F) \to K(F)$ is thus $Id + (-1)^j \tau$, where $\tau$ is induced by the involution of $GL(F)$ defined by $G\mapsto (G^t)^{-1}$.  Now, $\tau$ acts as the identity on $K_0(F)$ and is easily seen to be multiplication by $-1$ on $K_1(F)$.
\end{proof}

\subsubsection*{Representability and Gersten complex}
J. Hornbostel \cite[\S 3]{Hornbostel05} showed that there are spaces $\mathscr{GW}^j$ in $\hop k$ for any $j\in\mathbb{Z}$ such that for any smooth scheme $X$ we have $GW_i^j(X)=[S^i_s\wedge X_+,\mathscr{GW}^j]_{\aone}$ (see also \cite[\S 6]{PaninWalter1}).

\begin{defn}
\label{defn:gwsheaves}
For $i\in\mathbb N$, we set
\[
\begin{split}
\mathbf{GW}^j_i &:= \bpi_i^{\aone}(\mathscr{GW}^j), \text{ and } \\
\mathbf{GW}^j_{-i} &:= \bpi^{\aone}_{0,i}(\mathscr{GW}^{i+j}).
\end{split}
\]
\end{defn}

Since $GW_i^2(X)=K_iSp(R)$ for any ring $R$, we have $\mathbf{GW}_i^2=\K_i^{Sp}$ for any $i\in\mathbb{N}$. Moreover, it follows from \cite[Lemma 5.2]{PaninWalter1} that the sheaves $\mathbf{GW}^j_{-i}$ are the Nisnevich sheaves associated with the presheaves $U\mapsto W^{i+j}(U)$. Witt and Grothendieck-Witt groups are also representable in the stable $\aone$-homotopy category of $\pone$-spectra (\cite[\S 5]{Hornbostel05}), and therefore the sheaves $\mathbf{GW}_i^j$ are strictly $\aone$-invariant for any $i,j\in\mathbb Z$.  Using \cite[Theorem 6.13]{MField} (as well as \cite[Lemma 5.2]{PaninWalter1}), we deduce the following result on contractions of Grothendieck-Witt sheaves.

\begin{prop}
\label{prop:contractionofgw}
For any $i,j\in\mathbb{Z}$, we have $(\mathbf{GW}^j_{i})_{-1}=\mathbf{GW}^{j-1}_{i-1}$.
\end{prop}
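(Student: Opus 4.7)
The plan is to combine Morel's general formula relating contraction with the $\gm$-loop functor with motivic Bott periodicity for hermitian $K$-theory. By \cite[Theorem 6.13]{MField}, for any pointed space $\mathscr{X}$ whose $\aone$-homotopy sheaves are strongly $\aone$-invariant, there is a canonical identification
\[
(\bpi_i^{\aone}(\mathscr{X}))_{-1} \;\cong\; \bpi_{i,1}^{\aone}(\mathscr{X});
\]
this may be checked directly by evaluating the defining kernel of $(\cdot)_{-1}$ on the Nisnevich sheafification of the presheaf $U \mapsto [S^i_s\wedge U_+,\mathscr{X}]_{\aone}$, using the splitting $(\gm\times U)_+\simeq U_+\vee (\gm\wedge U_+)$. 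The same argument applied to the presheaf $U\mapsto [\gm^{\wedge n}\wedge U_+,\mathscr{X}]_{\aone}$ yields the variant $(\bpi_{0,n}^{\aone}(\mathscr{X}))_{-1}\cong\bpi_{0,n+1}^{\aone}(\mathscr{X})$ that we will need in negative degrees.

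The second ingredient is motivic Bott periodicity for hermitian $K$-theory \cite[Lemma 5.2]{PaninWalter1}: because Grothendieck-Witt theory is representable in the stable $\aone$-homotopy category of $\pone$-spectra, the family $(\mathscr{GW}^j)_{j\in\Z}$ assembles into a $\pone$-spectrum whose bonding maps furnish $\aone$-weak equivalences $\Omega_{\pone}\mathscr{GW}^j\simeq\mathscr{GW}^{j-1}$. Since $\Omega_{\pone}=\Omega_s\circ\Omega_{\gm}$, passing to $\aone$-homotopy sheaves yields, for every $i\geq 1$,
\[
\mathbf{GW}^{j-1}_{i-1} = \bpi_{i-1}^{\aone}(\mathscr{GW}^{j-1}) \;\cong\; \bpi_i^{\aone}(\Omega_{\gm}\mathscr{GW}^j) = \bpi_{i,1}^{\aone}(\mathscr{GW}^j),
\]
which combined with the contraction identity of the first paragraph gives $(\mathbf{GW}^j_i)_{-1}\cong\mathbf{GW}^{j-1}_{i-1}$ in this range.

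For $i\leq 0$ the assertion is essentially definitional. Writing $i=-m$ with $m\geq 0$, Definition \ref{defn:gwsheaves} gives $\mathbf{GW}^j_{-m}=\bpi_{0,m}^{\aone}(\mathscr{GW}^{m+j})$, and the variant of the contraction formula noted above yields
\[
(\mathbf{GW}^j_{-m})_{-1} \;\cong\; \bpi_{0,m+1}^{\aone}(\mathscr{GW}^{m+j}) \;=\; \mathbf{GW}^{(m+j)-(m+1)}_{-m-1} \;=\; \mathbf{GW}^{j-1}_{-m-1},
\]
which matches the claim since $i-1=-m-1$. The boundary case $i=0$ can be handled by either formula with consistent output, since $\mathbf{GW}^j_0=\bpi_{0,0}^{\aone}(\mathscr{GW}^j)=\bpi_0^{\aone}(\mathscr{GW}^j)$.

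The main technical point to verify is that \cite[Theorem 6.13]{MField} is applicable to each $\mathscr{GW}^j$ in every relevant degree; this follows from the strict $\aone$-invariance of the sheaves $\mathbf{GW}^j_i$ already recorded in the discussion preceding the proposition, together with the fact that each $\mathscr{GW}^j$ arises as an infinite $\pone$-loop space, so its $\aone$-homotopy sheaves are automatically strongly $\aone$-invariant. With this in hand the proof is essentially formal, amounting to a double application of the loop-contraction adjunction followed by invocation of Bott periodicity.
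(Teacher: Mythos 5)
Your proof is correct and takes the same route as the paper: the paper's proof is exactly a citation of Morel's contraction-loop identity \cite[Theorem 6.13]{MField} together with the $\pone$-$\Omega$-spectrum structure on hermitian $K$-theory from \cite[Lemma 5.2]{PaninWalter1}. You have simply unpacked those two citations with the right details, including the consistency of Definition \ref{defn:gwsheaves} for negative $i$.
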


\begin{rem}
\label{rem:karoubiperiodicitycontractions}
The proof of \cite[Lemma 5.2]{PaninWalter1} also gives the contractions of the hyperbolic morphisms $H_{i,j}:\K^Q_i\to \mathbf{GW}_i^j$ and forgetful morphisms $f_{i,j}:\mathbf{GW}_i^j\to \K_i^Q$. We find $(H_{i,j})_{-1}=H_{i-1,j-1}$ and $(f_{i,j})_{-1}=f_{i-1,j-1}$.
\end{rem}

\subsubsection*{Some actions of $\gm$ on Grothendieck-Witt sheaves}
Proposition \ref{prop:contractionofgw} implies that, for any pair of integers $i,j$, the sheaf $\mathbf{GW}^j_i$ is a contraction.  As a consequence, the sheaf $\mathbf{GW}^j_i$ is equipped with a natural action of $\gm$ that factors through an action of $\K^{MW}_0 \cong \mathbf{GW}^0_0$; see the beginning of Section \ref{section:aonehomotopystablerange} for a more discussion of this action.  On the other hand, the multiplicative structure on Grothendieck-Witt groups \cite[\S 5.4]{Schlichting08pre} determines a morphism
\[
\mathbf{GW}^0_0 \times \mathbf{GW}^j_i \longrightarrow \mathbf{GW}^j_i.
\]
The next lemma shows that, under the canonical isomorphism between $\K^{MW}_0$ and $\mathbf{GW}^0_0$, these two actions coincide.

\begin{lem}
\label{lem:actionongwij}
The action of $\K^{MW}_0$ on $\mathbf{GW}^j_i$ coming from \textup{\cite[Lemma 3.49]{MField}} coincides with the multiplicative action of $\mathbf{GW}^0_0$ on $\mathbf{GW}^j_i$ under the canonical isomorphism $\K^{MW}_0 \isomt \mathbf{GW}^0_0$.
\end{lem}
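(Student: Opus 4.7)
The plan is to reduce the identity of the two actions first to sections over fields, then to a small set of generators, and finally to a direct unwinding of definitions. Both constructions make $\mathbf{GW}^j_i$ into a module over $\K^{MW}_0 \cong \mathbf{GW}^0_0$ in a functorial way, so the two structure maps $\K^{MW}_0 \to \underline{\mathrm{End}}(\mathbf{GW}^j_i)$ are morphisms of sheaves on $\Sm_k$. Since $\mathbf{GW}^j_i$ is strictly $\aone$-invariant, hence unramified in the sense of \cite[Definition 2.1]{MField}, it suffices by \cite[Theorem 2.12]{MField} to check agreement on sections over every finitely generated field extension $F/k$.

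Over such an $F$, the ring $\K^{MW}_0(F) \cong GW(F)$ is additively generated by the one-dimensional symmetric bilinear forms $\langle u \rangle$ with $u \in F^\times$, so it is enough to verify that for each $u \in F^\times$ Morel's action of $\langle u \rangle$ on $\mathbf{GW}^j_i(F)$ coincides with multiplication by $\langle u \rangle$ in the Grothendieck--Witt ring. Using the contraction identification $\mathbf{GW}^j_i \cong (\mathbf{GW}^{j+1}_{i+1})_{-1}$ from Proposition \ref{prop:contractionofgw}, Morel's action, as recalled in the proof of \cite[Lemma 3.49]{MField}, sends a section $x \in \mathbf{GW}^j_i(F) \subset \mathbf{GW}^{j+1}_{i+1}(\gm_F)$ to the pullback of $x$ along the morphism $(u,\mathrm{id}): \Spec F \to \gm_F$ determined by the unit $u$.

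On the other side, the multiplicative pairing on Grothendieck--Witt spectra of \cite[\S 5.4]{Schlichting08pre} is natural with respect to morphisms in the first factor, and the class of $u : \Spec F \to \gm$ viewed in $\mathbf{GW}^0_0(F) = GW(F)$ is exactly $\langle u \rangle$ (this is the canonical identification $\K^{MW}_0 \cong \mathbf{GW}^0_0$ evaluated on $u$). Unravelling these compatibilities in conjunction with the contraction splitting then expresses cup-product with $\langle u \rangle$ on sections of $(\mathbf{GW}^{j+1}_{i+1})_{-1}(F)$ by the same pullback formula along $(u,\mathrm{id})$, yielding the desired equality.

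The main obstacle is the spectrum-level compatibility invoked in the previous paragraph: identifying the geometric pullback along $(u,\mathrm{id})$ with cup-product by $\langle u \rangle$ requires careful bookkeeping for the splitting $\mathbf{GW}^{j+1}_{i+1}(\gm_F) \cong \mathbf{GW}^{j+1}_{i+1}(F) \oplus \mathbf{GW}^j_i(F)$, the base-point conventions of the smash product, and the sign conventions in the multiplicative structure. An alternate route that bypasses these low-level compatibilities would be to observe that Morel's construction produces the unique $\K^{MW}_0$-module structure on a contraction $\mathbf{A} = \mathbf{B}_{-1}$ satisfying the ``pullback-along-$u$'' property, and then verify directly that the multiplicative $\mathbf{GW}^0_0$-action on $\mathbf{GW}^j_i$ satisfies this same universal property via the naturality of the Grothendieck--Witt pairing.
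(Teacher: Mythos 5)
Your proposal contains a conceptual error in the description of Morel's action and leaves the essential comparison as an unfilled gap.

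The description of Morel's action as ``pullback along the morphism $(u,\mathrm{id}):\Spec F\to\gm_F$'' is not correct. In \cite[Lemma 3.49]{MField}, the action of $u\in\O(U)^\times$ on a contraction $\mathbf{A}_{-1}(U)\subset\mathbf{A}(\gm\times U)$ is induced by the multiplication map $m_u:\gm\times U\to\gm\times U$, $(t,x)\mapsto(u(x)t,x)$ --- a self-map of $\gm\times U$, not a section of the projection. Pullback along $(u,\mathrm{id})$ would land in $\mathbf{GW}^{j+1}_{i+1}(F)$, not in $\mathbf{GW}^j_i(F)$, so the formula you state does not even produce an endomorphism of the correct sheaf; worse, $x$ lies in the kernel of pullback along the unit section, not of pullback along $(u,\mathrm{id})$, so the two operations are genuinely different. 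With the correct description $m_u^*$, the computation still remains to be done: writing $x=p^*\alpha\cdot[t]$ under the splitting $\mathbf{GW}^{j}_{i}(X\times\gm)\cong\mathbf{GW}^{j}_{i}(X)\oplus\mathbf{GW}^{j-1}_{i-1}(X)$, one has $m_u^*(p^*\alpha\cdot[t])=p^*\alpha\cdot[ut]$, and one must then use $[ut]=[u]+\langle u\rangle[t]$ in Milnor--Witt $K$-theory together with the $\mathbf{GW}^0_0$-linearity of the projection $\pi$ to extract $\langle u\rangle\alpha$.

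That last computation is exactly the content of the lemma, and it is precisely the step you flag as ``the main obstacle'' and defer to ``careful bookkeeping'' or to an unstated universal property (whose defining property, as phrased, inherits the same ``pullback along $(u,\mathrm{id})$'' error). The paper's argument is shorter and avoids both the reduction to fields and the deferred spectrum-level comparison: it works directly with the exact sequence $0\to\mathbf{GW}^j_i(X)\to\mathbf{GW}^j_i(X\times\gm)\xrightarrow{\pi}\mathbf{GW}^{j-1}_{i-1}(X)\to 0$, produces the splitting from the class $[t]\in\mathbf{GW}^1_1(\gm)$ (identified with the Milnor--Witt symbol via \cite[Corollaire 4.5.1.5]{BargeLannes}), and computes $a\cdot\alpha=\pi(p^*\alpha\cdot[at])=\langle a\rangle\cdot\pi(p^*\alpha\cdot[t])$ using $[at]=[a]+\langle a\rangle[t]$ and the $\mathbf{GW}^0_0$-linearity of $\pi$. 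Your reduction to fields and to generators is sound but unnecessary once the splitting computation is carried out directly.
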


\begin{proof}
We unwind the definition of the action of $\K^{MW}_0$ on $\mathbf{GW}^0_0$.  For any smooth scheme $X$, we have an exact sequence of the form
\[
0 \longrightarrow \mathbf{GW}^j_i(X) \longrightarrow \mathbf{GW}^j_{i}(X\times\gm) \stackrel{\pi}{\longrightarrow} \mathbf{GW}^{j-1}_{i-1}(X) \longrightarrow 0.
\]
One can construct a splitting of $\pi$ as follows.  The projection map $p: X \times \gm \to X$ induces a pullback map $\mathbf{GW}^{j-1}_{i-1}(X) \to \mathbf{GW}^{j-1}_{i-1}(X \times \gm)$.  On the other hand, the canonical homomorphism $\K^{MW}_1(\gm) \to \mathbf{GW}^1_1(\gm)$ is an isomorphism (this follows from \cite[Corollaire 4.5.1.5]{BargeLannes} after unwinding the definition of $\mathbf{GW}^1_1$ and using the description of $\K^{MW}_1$ as a fiber product).  Picking a coordinate $t$ on $\gm$, there is, under this identification, a canonical class $[t] \in \mathbf{GW}^1_1$.  The element $[t]$ then determines a class in $\mathbf{GW}^{1}_{1}(X \times \gm)$ by pullback, and multiplication by this element determines a homomorphism $\mathbf{GW}^{j-1}_{i-1}(X \times \gm) \to \mathbf{GW}^j_i(X \times \gm)$ that provides the required splitting.

Now, given any $a \in \O_X(X)^{\times}$, the action on $\mathbf{GW}^{j-1}_{i-1}(X)$ is given by the formula $a \cdot \alpha = \pi(p^*\alpha\cdot [at])$.  Since $\K_1^{MW}(X\times \gm)=\mathbf{GW}_1^1(X\times\gm)$, we have $[at]=[a]+\langle a\rangle [t]$ where $\langle a\rangle$ is the class defined by $a$ in $\mathbf{GW}_0^0(X)$. Since $\pi$ is $\mathbf{GW}_0^0$-linear, we get
\[
\pi(p^*\alpha\cdot [at])=\pi(p^*\alpha\cdot [a])+\pi(p^*\alpha\cdot \langle a\rangle [t])=\langle a\rangle\cdot \pi(p^*\alpha\cdot [t]),
\]
which provides the required identification.
\end{proof}

Using Proposition \ref{prop:contractionofgw}, it follows from \cite[Definition 5.7]{MField} that the degree $n$ piece of the Rost-Schmid complex for $\mathbf{A} = \mathbf{GW}^j_i$ has terms of the form
\[
\bigoplus_{x\in X^{(n)}} GW_{i-n}^{j-n}(k(x),\omega_x^{\L}),
\]
where $\omega_x^{\L}=\mathrm{Ext}^n_{\mathcal O_{X,x}}(k(x),\L\otimes\mathcal O_{X,x})$.

A priori, there is another action of $\gm$ on $\mathbf{GW}_i^2$ for any $i\in\mathbb{N}$ defined as follows.
If $R$ is a smooth $k$-algebra and $t\in R^\times$ is an invertible element, then let $\sigma_{2,t}$ be the invertible matrix $\begin{pmatrix} t & 0 \\ 0 & 1\end{pmatrix}$.  Define $2n \times 2n$-matrices inductively by means of the formula $\sigma_{2n,t}:=\sigma_{2n-2,t}\perp \sigma_{2,t}$.  For any natural number $n$, the function assigning to $t \in R^{\times}$ the operation of conjugating a matrix $X \in Sp_{2n}(R)$ by $\sigma_{2n,t}$ defines an action of $R^{\times}$ on $Sp_{2n}(R)$.  This construction defines an action of $\gm$ on $Sp_{2n}$ and consequently an action of $\gm$ on $BSp_{2n}$.  The stabilization morphisms $BSp_{2n} \to BSp_{2n+2}$ are, by construction, equivariant for the $\gm$-actions so defined.  As a consequence, there is an induced action
\begin{equation}
\label{eqn:action}
\gm\times \mathbf{GW}_i^{2} \longrightarrow \mathbf{GW}_i^{2}.
\end{equation}
We can compare this action of $\gm$ to the one discussed above.

\begin{lem}\label{lem:multiplicativeaction}
For any smooth $k$-algebra $R$, and any $t \in R^{\times}$, the action of $t$ on $\mathbf{GW}^2_i(R)$ described in \textup{\ref{eqn:action}} is given by multiplication by $\langle t \rangle$.
\end{lem}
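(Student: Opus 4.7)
The strategy is to realize both actions as arising from a single endofunctor of the category of symplectic spaces. Since $\mathbf{GW}^2_i$ is strictly $\aone$-invariant and hence unramified, it suffices to verify the equality on sections over a finitely generated field extension $F/k$. Over such a field, $\mathbf{GW}^2_i(F) \cong K_iSp(F)$ is computed by Karoubi's hermitian $K$-theory of the symmetric monoidal category $\mathrm{Sp}(F)$ of symplectic spaces over $F$.

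Let $\Phi_t$ be the endofunctor of $\mathrm{Sp}(F)$ defined by $(M,\phi) \mapsto (M, t\phi)$ and the identity on morphism sets (well-defined since any symplectic isomorphism $f: (M,\phi) \to (M',\phi')$ remains one from $(M, t\phi)$ to $(M', t\phi')$). Under the canonical identification $F \otimes_F M \cong M$, the functor $\Phi_t$ is naturally isomorphic to tensor product with the rank-one symmetric form $\langle t \rangle \in \mathbf{GW}^0_0(F)$; hence $\Phi_t$ acts on $K_iSp(F) = \mathbf{GW}^2_i(F)$ as multiplication by $\langle t\rangle$ via the product structure on Grothendieck--Witt theory \cite[\S 5.4]{Schlichting08pre}. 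On the other hand, for the standard symplectic space $(F^{2n}, J_{2n})$, a direct computation gives $\sigma_{2n,t}^{T} J_{2n}\,\sigma_{2n,t} = t J_{2n}$, so $\sigma_{2n,t}$ is a symplectic isomorphism $(F^{2n}, tJ_{2n}) \isomto (F^{2n}, J_{2n})$. Composing the tautological identification $Sp_{2n}(F) = \mathrm{Aut}(F^{2n}, J_{2n}) = \mathrm{Aut}(F^{2n}, tJ_{2n})$ furnished by $\Phi_t$ with the isomorphism $\mathrm{Aut}(F^{2n}, tJ_{2n}) \cong Sp_{2n}(F)$ induced by $\sigma_{2n,t}$ is precisely the conjugation $X \mapsto \sigma_{2n,t} X \sigma_{2n,t}^{-1}$. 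The block-diagonal construction $\sigma_{2n+2,t} = \sigma_{2n,t}\perp \sigma_{2,t}$ makes this compatible with stabilization, and since any two similitudes with multiplier $t$ differ by an element of $Sp_{2n}$ while inner automorphisms act trivially on $\bpi_{*}^{\aone}(BSp_{2n})$, the resulting self-map of $\mathbf{GW}^2_i(F)$ is independent of the choice of similitude and coincides with the one induced by $\Phi_t$. Combining the two descriptions of $\Phi_t$ yields the claim.

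The principal technical delicacy is making the identifications above precise at the level of the $\aone$-homotopy type of the symplectic $K$-theory space, rather than only on $K_0$: one must verify that $\Phi_t$ is realized in $\hop{k}$ by the conjugation self-map of $BSp_{2n}$, functorially in $t$ and compatibly with the stabilization $Sp_{2n} \hookrightarrow Sp_{2n+2}$. This amounts to a bookkeeping argument within the framework of symmetric monoidal exact categories with duality developed in \cite{Schlichting10}.
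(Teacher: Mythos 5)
Your core idea is the same as the paper's: identify the form-rescaling endofunctor $(M,\phi)\mapsto(M,t\phi)$ simultaneously as multiplication by $\langle t\rangle$ and, via the similitude $\sigma_{2n,t}$ (the computation $\sigma_{2n,t}^{T}J_{2n}\sigma_{2n,t}=tJ_{2n}$ is exactly right), as the conjugation action. That is essentially the skeleton of the proof in the paper.

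However, the thing you flag at the end as a ``bookkeeping argument'' is in fact the entire technical content of the lemma, and the paper does not delegate it to the general machinery of \cite{Schlichting10}; it works it out explicitly in Quillen's $\mathcal{S}^{-1}\mathcal{S}$ model. Concretely: the paper identifies $GW^2_i(R)$ with $\pi_i$ of $B\mathcal{S}^{-1}\mathcal{S}$ (where $\mathcal{S}$ is the symmetric monoidal groupoid of symplectic $R$-modules and isometries), writes down the map $c:\mathrm{hocolim}_n\,\mathrm{Aut}(h_{2n})\to(\mathcal{S}^{-1}\mathcal{S})_0$, and then compares the two composites $\mathrm{hocolim}\,\mathrm{Aut}(h_{2n})\to(\mathcal{S}^{-1}\mathcal{S})_0$ (conjugation by $\sigma_{2n,t}$ then $c$, versus $c$ then the $\otimes\langle t\rangle$-action). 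These two functors are not equal, but the paper produces an explicit natural isomorphism between them, given level-wise by $(\sigma_{2n,t},\sigma_{2n,t})$ in $\mathcal{S}^{-1}\mathcal{S}$; this is what upgrades the pointwise agreement you observe into a homotopy of self-maps of the $K$-theory space, compatibly with stabilization. Your proposal gestures at this but does not exhibit the natural transformation, which is the one step that actually requires an argument. Two further small remarks: the reduction to sections over fields is unnecessary (the paper argues directly over the ring $R$), and because the diagram in the paper has the $c$-map sending $f\mapsto(1,f)$, the conjugation that appears is $f\mapsto\sigma_{2n,t}^{-1}f\sigma_{2n,t}$ rather than $\sigma_{2n,t}f\sigma_{2n,t}^{-1}$; this is only a matter of convention (it replaces $t$ by $t^{-1}$) but is worth tracking when you write the natural transformation down.
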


\begin{proof}
Let $R$ be a smooth algebra as above, and let $t\in R^\times$. Let $\mathcal S$ be the symmetric monoidal category with  objects given by pairs $(P,\varphi)$ where $P$ is a (finitely generated) projective $R$-module equipped with a non-degenerate skew-symmetric form $\varphi$ and with morphisms given by isometries.  An element $t \in R^{\times}$ determines a (strict) symmetric monoidal functor $\cdot \tensor \langle t \rangle: \mathcal{S} \to \mathcal{S}$ as follows:  on objects $(P,\varphi) \tensor \langle t \rangle := (P,t\varphi)$, and act by the identity on morphisms; this specifies a categorical action of $R^{\times}$ on $\mathcal{S}$.

The groups $GW^2_i(R)$ can be identified \cite[Theorem A.1]{Schlichting08pre} as homotopy groups of $B \mathcal{S}^{-1}\mathcal{S}$ (here $\mathcal{S}^{-1}\mathcal{S}$ is Quillen's construction; see, e.g., \cite[Chapter IV Definitions 4.2-3]{Weibel}).  The categorical action of $R^{\times}$ described above defines an action of $R^{\times}$ on the space $B \mathcal{S}^{-1}\mathcal{S}$.  The action of $R^{\times}$ on the connected component of the base-point $(\mathcal{S}^{-1}\mathcal{S})_0$ of $B \mathcal{S}^{-1}\mathcal{S}$ is given as follows: if $h_{2n}$ is the usual hyperbolic space of rank $2n$, then $t \cdot (h_{2n},h_{2n}) = (h_{2n} \tensor \langle t \rangle,h_{2n} \tensor \langle t \rangle)$ and $(f,g) \mapsto (f,g)$.

Now, the map
\[
c:BSp\to (\mathcal S^{-1}\mathcal S)_0
\]
is the map $\mathrm{hocolim}_{n\in\mathbb N}\mathrm{Aut}(h_{2n})\to (\mathcal S^{-1}\mathcal S)_0$ defined on objects by $n\mapsto (h_{2n},h_{2n})$ and on morphisms by $f\mapsto (1,f)$ (where $f\in\mathrm{Aut}(h_{2n}$)). Now, consider the diagram
\begin{equation}\label{eqn:homotopycommutative}
\xymatrix{\mathrm{hocolim}_{n\in\mathbb N}\mathrm{Aut}(h_{2n})\ar[r]\ar[d]_-c & \mathrm{hocolim}_{n\in\mathbb N}\mathrm{Aut}(h_{2n})\ar[d]^-c \\
(\mathcal S^{-1}\mathcal S)_0\ar[r] & (\mathcal S^{-1}\mathcal S)_0}
\end{equation}
where the top map is induced by the conjugation by $\sigma_{2n,t}$ and the bottom map is the action of $t$ on $(\mathcal S^{-1}\mathcal S)_0$ described above. This diagram is not strictly commutative. Indeed, one composite maps an object $n$ to $(h_{2n} \tensor \langle t \rangle,h_{2n} \tensor \langle t \rangle)$ and a morphism $f$ to $(1,f)$, while the other composite maps $n$ to $(h_{2n},h_{2n})$ and $f$ to $(1,\sigma_{2n,t}^{-1}f\sigma_{2n,t})$. However, there is a natural isomorphism given by $(\sigma_{2n,t},\sigma_{2n,t})$ as shown by the following diagram
\[
\xymatrix@C=5.5em{(h_{2n},h_{2n})\ar[r]^-{(1,\sigma_{2n,t}^{-1}f\sigma_{2n,t})}\ar[d]_-{(\sigma_{2n,t},\sigma_{2n,t})} & (h_{2n},h_{2n})\ar[d]^-{(\sigma_{2n,t},\sigma_{2n,t})} \\
(t\cdot h_{2n},t\cdot h_{2n})\ar[r]_-{(1,f)} & (t\cdot h_{2n},t\cdot h_{2n}).}
\]
It follows that the diagram (\ref{eqn:homotopycommutative}) commutes up to homotopy and the result stands proved.
\end{proof}

As explained before the lemma, the action of $\gm$ on $Sp_{2n}$ stabilizes. As a consequence, the homogeneous spaces $Sp_{2n+2}/Sp_{2n}$ inherit an action of $\gm$ such that the quotient map $Sp_{2n+2}\to Sp_{2n+2}/Sp_{2n}$ is equivariant.   The projection onto the first row map $Sp_{2n+2} \to {\mathbb A}^{2n+2}\setminus 0$ factors through a projection map $Sp_{2n+2}/Sp_{2n}\to \mathbb{A}^{2n+2}\setminus 0$ that is an $\aone$-weak equivalence.  If we equip ${\mathbb A}^{2n+2}\setminus 0$ with the $\gm$-action given (functorially) by
\[
t \cdot (a_1,\ldots,a_{2n+2})) = (a_1,t^{-1}a_2,a_3,t^{-1}a_4,\ldots,a_{2n+1},t^{-1}a_{2n+2})
\]
the map $Sp_{2n+2}/Sp_{2n} \to \mathbb{A}^{2n+2}\setminus 0$ becomes $\gm$-equivariant.

\begin{lem}\label{lem:trivialaction}
If $n \in {\mathbb N}$ is even, then the $\gm$-action on ${\mathbb A}^{2n}\setminus 0$ described above is $\aone$-homotopically trivial.
\end{lem}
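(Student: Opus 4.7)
The plan is to apply Morel's motivic Brouwer degree theorem to show that the action $\mu$ and the projection $p_2$ agree in $[\gm \times X, X]_{\aone}$, where $X := {\mathbb A}^{2n}\setminus 0$. Pointing $X$ at $e_1 = (1, 0, \ldots, 0)$ and $\gm$ at $1$, the action is $(t, v) \mapsto D_t v$ with
\[
D_t := \mathrm{diag}(1, t^{-1}, 1, t^{-1}, \ldots, 1, t^{-1}) \in GL_{2n}(\O(\gm))
\]
fixing $e_1$. Both $\mu$ and $p_2$ are then pointed morphisms restricting to $\mathrm{id}_X$ on $\{1\}\times X$ and to the constant $e_1$ on $\gm \times \{e_1\}$, so by the adjunction between the Cartesian product and the internal pointed mapping space, it suffices to show that the classifying morphism $\bar\mu : \gm \to \bpi_0^{\aone}(\text{pointed self-maps of } X)$ is $\aone$-nullhomotopic (based at $\mathrm{id}_X$).

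By Morel's theorem on self-maps of motivic spheres (see \cite[Chapter 7]{MField}), $\bpi_0^{\aone}$ of the pointed mapping sheaf of ${\mathbb A}^m\setminus 0$ into itself is $\K^{MW}_0$ (for $m \geq 2$), via the motivic Brouwer degree; for a pointed $k$-linear self-map given by $M \in GL_m$, this degree is $\langle \det M\rangle$. Applied to $D_t$, the composite induced by $\bar\mu$ is the section $t \mapsto \langle t^{-n}\rangle \in \K^{MW}_0(\gm)$. When $n = 2m$ is even, $t^{-n} = (t^{-m})^2$ is a square in $\O(\gm)^\times$, and the standard relation $\langle a b^2\rangle = \langle a\rangle$ in $\K^{MW}_0$ gives $\langle t^{-n}\rangle = \langle 1\rangle$; hence the Brouwer degrees of $\bar\mu$ and of the constant map at $\mathrm{id}_X$ already coincide.

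To promote this $\bpi_0$-level agreement to a genuine $\aone$-nullhomotopy, I would use the multiplicative decomposition $D_t = (t^{-1} I_{2n}) \cdot \sigma_{2n, t}$ with $\sigma_{2n, t} = \mathrm{diag}(t, 1, t, 1, \ldots, t, 1)$; this expresses $\mu$ as the composition of two simpler $\gm$-actions. The scalar factor has Brouwer degree $\langle t^{-2n}\rangle = \langle 1\rangle$ unconditionally (since $2n$ is even), and the $\sigma_{2n, t}$-factor has degree $\langle t^n\rangle = \langle 1\rangle$ precisely when $n$ is even. For each factor I would construct an explicit $\aone$-nullhomotopy of the associated classifying morphism, exploiting the $\aone$-connectedness of $SL_{2n}$ together with the coherent square-root identity $t^n = (t^m)^2$ (which crucially requires $n$ even).

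The hardest step is the last one: promoting the triviality of the Brouwer degree in $\K^{MW}_0(\gm)$ to a genuine $\aone$-nullhomotopy of $\bar\mu$, since a priori the classifying morphism could have nontrivial image in higher $\aone$-homotopy of the endomorphism sheaf. The decomposition above is the natural mechanism to resolve this obstruction, but implementing it rigorously requires care; the essential use of $n$ being even is that only then does the Brouwer degree of $\sigma_{2n, t}$ vanish, enabling a coherent family of nullhomotopies that assemble into the required $\aone$-triviality of $\mu$.
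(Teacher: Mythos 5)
Your core computation is exactly the paper's: by \cite[Corollary 6.43]{MField} pointed self-maps of ${\mathbb A}^{2n}\setminus 0$ over a field $F$ are classified up to $\aone$-homotopy by the motivic Brouwer degree valued in $\mathbf{GW}(F)$, the degree of the linear map $D_t$ is $\langle \det D_t\rangle = \langle t^{-n}\rangle$, and when $n$ is even this equals $\langle 1\rangle$ because $t^{-n}$ is a square. The paper cites \cite[Remark 2.6]{Fasel11c} plus the $\aone$-triviality of $\mathrm{diag}(t^{-1},t)$ for the degree identity, but the content is the same as your $\langle\det\rangle$ computation.

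Where you diverge is in what you set out to prove, and there you open a gap you never close. You reformulate the lemma as a statement in $[\gm\times X,X]_{\aone}$ and aim for a genuine $\aone$-nullhomotopy of the classifying map $\bar\mu:\gm\to \bpi_0^{\aone}$ of the endomorphism sheaf, acknowledging yourself that ``promoting the triviality of the Brouwer degree in $\K^{MW}_0(\gm)$ to a genuine $\aone$-nullhomotopy of $\bar\mu$'' is the hardest step and that you have only a plan (the decomposition $D_t = t^{-1}I_{2n}\cdot\sigma_{2n,t}$) rather than an argument. As written the proof is therefore incomplete: the concern you raise about higher $\aone$-homotopy of the endomorphism sheaf is real for the parametrized statement you are after, and the decomposition you propose does not by itself resolve it.

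But this stronger parametrized statement is more than the lemma is used for, and the paper does not attempt it. What Corollary \ref{cor:gmactionont2nplus2} actually needs is that the induced $\gm$-action on the $\aone$-homotopy \emph{sheaves} of ${\mathbb A}^{2n}\setminus 0$ is trivial. Those sheaves are strictly $\aone$-invariant, the $\gm$-action on a contraction factors through $\K^{MW}_0$, and a morphism of strictly $\aone$-invariant sheaves is determined by its values on sections over finitely generated field extensions (see \cite[Theorem 2.12]{MField}, as recalled in Section \ref{section:aonehomotopystablerange}). So it suffices to know, for each $F/k$ and each $t\in F^{\times}$, that the self-map $D_t$ of ${\mathbb A}^{2n}_F\setminus 0$ is $\aone$-homotopic to the identity — which is precisely what your degree computation gives. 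In short: your Brouwer-degree step is correct and is the whole proof; your promotion step is both unfinished and unnecessary once one observes that the statement being used downstream is a statement about homotopy sheaves, which are detected over fields.
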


\begin{proof}
For any (finitely generated) field extension $F/k$, we know from \cite[Corollary 6.43]{MField} that there is an isomorphism $[\mathbb A^{2n}_F\setminus 0,\mathbb A^{2n}_F\setminus 0]_{\aone}\cong\mathbf{GW}(F)$ called the motivic Brouwer degree.  By \cite[Remark 2.6]{Fasel11c} (and the fact that the matrix $\mathrm{diag}(t^{-1},t)$ is $\aone$-homotopic to the identity), the motivic Brouwer degree of the map
\[
(a_1,\ldots,a_{2n}) \longmapsto (a_1,t^{-1}a_2,a_3,t^{-1}a_4,\ldots,a_{2n-1},t^{-1}a_{2n})
\]
is precisely $\langle 1\rangle$.
\end{proof}

Theorem \ref{thm:firstnonstablehomotopysheafofsp2n} shows that, for any $n\in\mathbb N$, there is an exact sequence of sheaves of the form
\[
0 \longrightarrow {\mathbf T}'_{2n+2} \longrightarrow \bpi_{2n}^{\aone}(Sp_{2n}) \longrightarrow \mathbf{GW}_{2n+1}^2 \longrightarrow 0.
\]
If we equip $Sp_{2n}$, $Sp_{2n+2}$ and $\mathbb A^{2n}\setminus 0$ with the $\gm$-actions specified above, there is an induced action of $\gm$ on $\bpi_{2n}^{\aone}(Sp_{2n})$ such that the above sequence is an exact sequence of sheaves with $\gm$-action.  The next result is then a consequence of Lemmas \ref{lem:multiplicativeaction} and \ref{lem:trivialaction}.

\begin{cor}
\label{cor:gmactionont2nplus2}
The induced $\gm$-action on ${\mathbf T}'_{2n+2}$ is trivial if $n$ is odd, and restricts to the action given by multiplication on $\mathbf{GW}^2_{2n+1}$ under the map $\bpi_{2n}^{\aone}(Sp_{2n}) \to \mathbf{GW}_{2n+1}^2$.
\end{cor}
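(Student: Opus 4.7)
The plan is to combine Lemmas \ref{lem:multiplicativeaction} and \ref{lem:trivialaction} with the observation that the four-term exact sequence
\[
\K^{Sp}_{2n+2} \longrightarrow \K^{MW}_{2n+2} \longrightarrow \bpi_{2n}^{\aone}(Sp_{2n}) \longrightarrow \K^{Sp}_{2n+1} \longrightarrow 0
\]
used to define $\mathbf{T}'_{2n+2}$ is itself $\gm$-equivariant. Indeed, by the discussion preceding the statement, the $\aone$-fiber sequence $Sp_{2n} \hooklongrightarrow Sp_{2n+2} \longrightarrow Sp_{2n+2}/Sp_{2n}$ is $\gm$-equivariant (since the stabilization embeddings are equivariant by construction of the $\sigma_{2n,t}$), and the $\aone$-weak equivalence $Sp_{2n+2}/Sp_{2n} \to \mathbb{A}^{2n+2}\setminus 0$ is $\gm$-equivariant for the specified action. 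Taking the associated long exact sequence yields the equivariance claim, and so $\mathbf{T}'_{2n+2}$, defined as the cokernel of $\varphi_{2n+2}$, inherits a canonical $\gm$-action.

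For the first assertion, I would note that $\mathbf{T}'_{2n+2}$ is a quotient of $\K^{MW}_{2n+2} \cong \bpi_{2n+1}^{\aone}(\mathbb{A}^{2n+2}\setminus 0)$ with the $\gm$-action induced from the action on $\mathbb{A}^{2n+2}\setminus 0$. When $n$ is odd, $n+1$ is even, so applying Lemma \ref{lem:trivialaction} with $n$ replaced by $n+1$ shows that this action on $\mathbb{A}^{2n+2}\setminus 0$ is $\aone$-homotopically trivial. Consequently the induced action on $\K^{MW}_{2n+2}$ is trivial, and passing to the quotient the induced action on $\mathbf{T}'_{2n+2}$ is trivial as well.

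For the second assertion, the equivariance of the stabilization map $Sp_{2n} \hooklongrightarrow Sp_\infty$ shows that the surjection $\bpi_{2n}^{\aone}(Sp_{2n}) \to \bpi_{2n}^{\aone}(Sp_\infty) \cong \mathbf{GW}^2_{2n+1}$ in the four-term sequence is a morphism of sheaves with $\gm$-action. Lemma \ref{lem:multiplicativeaction} identifies the $\gm$-action on $\mathbf{GW}^2_{2n+1}$ arising from conjugation by the $\sigma_{2n,t}$ with multiplication by $\langle t \rangle$, which by Lemma \ref{lem:actionongwij} is precisely the multiplication action coming from the $\K^{MW}_0 \cong \mathbf{GW}^0_0$-module structure. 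This is exactly the asserted identification.

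I do not expect any substantive obstacle: the argument is bookkeeping built on top of Lemmas \ref{lem:multiplicativeaction} and \ref{lem:trivialaction}, which contain the essential geometric input. The only point that merits care is verifying that the $\gm$-action on $Sp_{2n+2}$ descends to $Sp_{2n+2}/Sp_{2n}$ (which holds because conjugation by $\sigma_{2n+2,t}$ preserves the block-diagonal copy of $Sp_{2n}$) and that the resulting action on $\mathbb{A}^{2n+2}\setminus 0$ under the projection is the one described in the excerpt—both of which are direct computations with the explicit matrices $\sigma_{2n+2,t}$.
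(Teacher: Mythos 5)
Your proof is correct and follows essentially the same route the paper implicitly takes (the paper simply asserts that the corollary is "a consequence of Lemmas \ref{lem:multiplicativeaction} and \ref{lem:trivialaction}"). You correctly fill in the bookkeeping: the $\gm$-equivariance of the stabilization $\aone$-fiber sequence makes $\varphi_{2n+2}$ equivariant so that $\mathbf{T}'_{2n+2}$, as its cokernel, inherits the trivial action from $\K^{MW}_{2n+2} \cong \bpi_{2n+1}^{\aone}(\mathbb{A}^{2n+2}\setminus 0)$ when $n+1$ is even (Lemma \ref{lem:trivialaction}), while the quotient $\mathbf{GW}^2_{2n+1}$ carries the multiplication action by Lemma \ref{lem:multiplicativeaction} (your invocation of Lemma \ref{lem:actionongwij} is harmless but not strictly needed for the statement).
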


\subsubsection*{Bounding the size of $H^3_{\Nis}(X, \mathbf{GW}^2_3(\L))$}
If $X$ is a smooth $k$-scheme (not necessarily a threefold), and $\L$ is a line bundle on $X$, then the $\gm$-action on $\mathbf{GW}^2_3$ described in the previous section can be used to define the sheaf $\mathbf{GW}^2_3(\L)$. In this section, we analyze the Gersten (or Rost-Schmid) resolution of $\mathbf{GW}^2_3(\L)$ in more detail in order to compute $H^3_{\Nis}(X, \mathbf{GW}^2_3(\L))$.

\begin{rem}
A priori, there is another definition of the sheaf $\mathbf{GW}^2_3(\L)$ that appears in the literature: for any \'etale morphism $u: U \to X$, one can sheafify $U \mapsto GW^2_3(U,u^*\L)$ for the Nisnevich topology.  The resulting sheaf is an unramified sheaf and the description of the Gersten resolution shows that this definition coincides with the one we gave.
\end{rem}

Using Karoubi periodicity, we get an exact sequence of sheaves on $X$
\begin{equation}\label{eqn:Karoubi}
\xymatrix{\K_3^Q\ar[r]^-{H_{3,2}} & \mathbf{GW}_3^2(\L)\ar[r]^-{\eta} & \mathbf{GW}_2^1(\L)\ar[r]^-{f_{2,1}} & \K_2^Q};
\end{equation}
we denote by $\mathbf{A}(\L)$ the image of $H_{3,2}$ and by $\mathbf{B}(\L)$ the image of $\eta$. We thus get an exact sequence
\begin{equation}\label{eqn:exactI}
\xymatrix{0\ar[r] & \mathbf{A}(\L)\ar[r] & \mathbf{GW}_3^2(\L)\ar[r]^-{\eta} & \mathbf{B}(\L)\ar[r] & 0.}
\end{equation}

\begin{lem}\label{lem:contractionsAB}
The epimorphism $H_{3,2}:\K_3^Q\to \mathbf{A}(\L)$ induces an isomorphism $\K^Q_1/2\cong \mathbf{A}_{-2}(\L)$. Moreover, $\mathbf{B}_{-2}(\L)\cong \Z/2$ and the exact sequence
\[
\xymatrix{0\ar[r] & \mathbf{A}(\L)_{-2}\ar[r] & \mathbf{GW}_1^0(\L)\ar[r] & \mathbf{B}(\L)_{-2}\ar[r] & 0}
\]
splits if $\L$ is trivial.
\end{lem}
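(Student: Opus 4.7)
The plan is to apply the double contraction functor $(\cdot)_{-2}$ to the Karoubi periodicity sequence (\ref{eqn:Karoubi}) and to the short exact sequence (\ref{eqn:exactI}). Since $(\cdot)_{-1}$ is exact on strictly $\aone$-invariant sheaves, commutes with twisting by a line bundle, and (via Proposition \ref{prop:contractionofgw} and Remark \ref{rem:karoubiperiodicitycontractions}) sends the Karoubi maps between Grothendieck-Witt sheaves to the correspondingly-indexed Karoubi maps, this produces the four-term exact sequence
\[
\K_1^Q \xrightarrow{H_{1,0}} \mathbf{GW}_1^0(\L) \xrightarrow{\eta} \mathbf{GW}_0^{-1}(\L) \xrightarrow{f_{0,-1}} \K_0^Q,
\]
together with the displayed exact sequence of the lemma, in which $\mathbf{A}(\L)_{-2} = \mathrm{Im}(H_{1,0})$ and $\mathbf{B}(\L)_{-2} = \ker(f_{0,-1})$. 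All subsequent arguments reduce to sections on finitely generated field extensions $F/k$ by appealing to the fact that strictly $\aone$-invariant sheaves are unramified and that line bundle twists trivialize over fields.

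For the first isomorphism, I would extend the contracted sequence further to the left so that $\ker(H_{1,0})$ becomes $\mathrm{Im}(f_{1,-1}\colon \mathbf{GW}_1^{-1}(\L) \to \K_1^Q)$, and use the $4$-periodicity in $j$ to replace $\mathbf{GW}_1^{-1}$ by $\mathbf{GW}_1^3$. Over $F$, the vanishing $GW_0^1(F) = 0$ (a standard consequence of Karoubi periodicity using $W^j(F) = 0$ for $j \not\equiv 0 \pmod 4$ and surjectivity of the rank map $GW(F) \twoheadrightarrow K_0(F)$) combined with Lemma \ref{lem:computationfH} for $(i,j) = (1,2)$ (where $1+(-1)^{i+j} = 0$) forces $H_{1,2}$ to be surjective and $f_{1,2} = 0$, so $H_{1,3}\colon K_1(F) \isomto GW_1^3(F)$. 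A second application of Lemma \ref{lem:computationfH} with $(i,j) = (1,3)$ identifies $f_{1,3}$ with multiplication by $2$ on $K_1(F) = F^\times$; hence $\ker(H_{1,0})(F) = 2\K_1^Q(F)$ and the claimed isomorphism follows by the unramified characterization.

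For the second isomorphism, the same method computes $GW_0^3(F) = \Z/2$ from the Karoubi segment
\[
GW_0^2(F) \xrightarrow{f_{0,2}} K_0(F) \xrightarrow{H_{0,3}} GW_0^3(F) \xrightarrow{\eta} GW_{-1}^2(F) = W^3(F) = 0,
\]
using $GW_0^2(F) = K_0 Sp(F) = \Z$ (generated by the rank-$2$ hyperbolic symplectic) and $f_{0,2} = \times 2$. The only homomorphism $\Z/2 \to \Z = K_0(F)$ is zero, so $\ker(f_{0,-1})(F) = \Z/2$; any $\gm$-action on a $\Z/2$-valued strictly $\aone$-invariant sheaf is forced to be trivial, so the twist by $\L$ disappears and $\mathbf{B}(\L)_{-2}$ is identified with the constant sheaf $\Z/2$.

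For the splitting when $\L = \O_X$, I would exhibit a canonical order-two element of $\mathbf{GW}_1^0(k)$ that maps to the generator of the quotient $\mathbf{B}(\L)_{-2} = \Z/2$: a natural candidate is the class of a reflection on the hyperbolic plane such as $\mathrm{diag}(-1,1)$. By extension of scalars this yields a morphism of constant sheaves $\Z/2 \to \mathbf{GW}_1^0$ splitting the projection. Alternatively, one could construct a left splitting via the sheafified spinor norm $\mathbf{GW}_1^0 \to \K_1^Q/2$, relying on the classical identity $\mathrm{sn}(\mathrm{diag}(a,a^{-1})) \equiv a \pmod{(F^\times)^2}$. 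The main obstacle is verifying that the chosen element of order two genuinely hits the distinguished generator of $\mathbf{B}(\L)_{-2}$; this amounts to tracing the Karoubi boundary $\eta$ on reflections, which will likely require either a direct low-dimensional computation or an appeal to Karoubi's classical calculation of $K_1 O(F)$.
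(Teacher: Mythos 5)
Your overall strategy -- contract the Karoubi periodicity sequence twice and pass to sections over finitely generated field extensions -- matches the paper's, and most of the intermediate computations are correct. There are two genuine points of difference and one small gap worth flagging.

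For the identification $\K_1^Q/2 \cong \mathbf{A}(\L)_{-2}$, you derive the computation of $\ker(H_{1,0})$ entirely from Karoubi periodicity together with Lemma~\ref{lem:computationfH}: you use $GW_0^1(F)=0$ to get $H_{1,2}$ surjective, Lemma~\ref{lem:computationfH} at $(i,j)=(1,2)$ to force $f_{1,2}=0$, conclude $H_{1,3}$ is an isomorphism, and then read off from $(i,j)=(1,3)$ that $\operatorname{Im}(f_{1,3})=2\K_1^Q(F)$, hence $\ker(H_{1,0})(F)=2K_1(F)$. The paper instead cites \cite[Corollary 4.7.7]{Bass74} directly for injectivity of $F^\times/(F^\times)^2 \to GW_1^0(F,\L)$. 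Both routes are valid; yours is self-contained within the machinery of Section~\ref{section:gwgroups}, whereas the paper's is shorter because it imports the injectivity from the literature. Similarly for $\mathbf{B}(\L)_{-2}\cong\Z/2$: you compute $GW_0^3(F)$ from the segment $GW_0^2(F)\xrightarrow{\times 2} K_0(F)\to GW_0^3(F)\to W^3(F)=0$, whereas the paper quotes \cite[Lemma 4.1]{Fasel08c} and \cite[Proposition 5.2]{Balmer02}; the conclusions agree.

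The only real gap is the one you yourself identify at the end: you propose the reflection class in $\mathbf{GW}_1^0(k)$ as a section of the projection onto $\mathbf{B}_{-2}\cong\Z/2$, but do not verify that it hits the generator. This is resolvable in one line and is essentially what the paper means by ``$\eta$ coincides with the determinant'': the image of $H_{1,0}$ consists of classes of hyperbolic automorphisms $\mathrm{diag}(a,a^{-1})$, which all have determinant $1$, whereas the reflection has determinant $-1$; since $\ker(\eta)=\operatorname{Im}(H_{1,0})$, the reflection cannot lie in the kernel and hence maps to the nonzero element of $\Z/2$. With this observation spelled out, your proof is complete. One minor caution elsewhere: when you pass from the field-level equality $\ker(H_{1,0})(F)=2K_1(F)$ to the sheaf-level factorization through $\K_1^Q/2$, you should note explicitly that the composite $\K_1^Q\xrightarrow{\times 2}\K_1^Q\xrightarrow{H_{1,0}}\mathbf{A}(\L)_{-2}$ vanishes on all fields and hence is zero as a morphism of strictly $\aone$-invariant sheaves; this is the step that makes the induced map $\K_1^Q/2\to\mathbf{A}(\L)_{-2}$ well-defined before one checks it is an isomorphism over fields.
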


\begin{proof}
Contracting (\ref{eqn:Karoubi}) two times, we get an exact sequence of sheaves
\[
\xymatrix{\K_1^Q\ar[r]^-{H_{1,0}} & \mathbf{GW}_1^0(\L)\ar[r]^-{\eta} & \mathbf{GW}_0^3(\L)\ar[r]^-{f_{0,3}} & \Z.}
\]
By \cite[Lemma 4.1]{Fasel08c}, the hyperbolic functor $H_{0,3}:\Z\to \mathbf{GW}_0^3(\L)$ induces an isomorphism $\Z/2\cong GW_0^3(k(X),\L)$. On the other hand, $(\mathbf{GW}_0^3(\L))_{-1}=\mathbf{W}_0^3(\L)$ which vanishes on any field by \cite[Proposition 5.2]{Balmer02}. It follows that $\mathbf{GW}_0^3(\L)=\Z/2$ and that $f_{0,3}=0$. Thus $\mathbf{B}(\L)_{-2}=\mathbf{GW}_0^3(\L)\cong\Z/2$. If $\L$ is trivial, it is easy to check that the map $\eta:\mathbf{GW}_1^0\to \Z/2$ coincides with the determinant morphism, which is split.

Using \cite[Corollary 4.7.7]{Bass74}, we see that the hyperbolic map $H_{1,0}:\K_1^Q\to \mathbf{GW}_1^0(\L)$ induces an injective map $k(X)^\times/(k(X)^\times)^2\to GW_1^0(k(X),\L)$. It follows that $H_{1,0}$ induces an injective morphism $\K_1^Q/2\to \mathbf{GW}_1^0(\L)$ with, by definition, image equal to $\mathbf{A}(\L)_{-2}$.
\end{proof}

The exact sequence
\[
\xymatrix{0\ar[r] & \mathbf{A}(\L)\ar[r] & \mathbf{GW}_3^2(\L)\ar[r]^-\eta & \mathbf{B}(\L)\ar[r] & 0}
\]
yields an exact sequence of cohomology groups
\[
\xymatrix{H^2_{\Nis}(X,\mathbf{B}(\L))\ar[r] & H^3_{\Nis}(X,\mathbf{A}(\L))\ar[r] & H^3_{\Nis}(X,\mathbf{GW}_3^2(\L))\ar[r] & H_{\Nis}^3(X,\mathbf{B}(\L))}
\]
and we now turn to the task of understanding the cohomology groups of $\mathbf{A}$ and $\mathbf{B}$. We first introduce some notation.

\begin{notation}
\label{notation:Ch}
We will denote by $Ch^n(X)$ the group $CH^n(X)/2$, where $CH^n(X)$ is the Chow groups of codimension $n$ cycles in $X$.
\end{notation}

\begin{lem}\label{lem:H3XA}
We have $H^3_{\Nis}(X,\mathbf{B}(\L))=0$. Moreover, the hyperbolic morphism $H_{3,2}:\K_3^Q\to \mathbf{A}(\L)$ induces an isomorphism $Ch^3(X)\cong H^3_{\Nis}(X,\K_3^Q/2)\cong H^3_{\Nis}(X,\mathbf{A}(\L))$.
\end{lem}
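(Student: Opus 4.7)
The proof proceeds by computing each cohomology group via the Rost--Schmid complex of the relevant strictly $\aone$-invariant sheaf.

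For the vanishing $H^3_{\Nis}(X,\mathbf{B}(\L)) = 0$, Lemma \ref{lem:contractionsAB} gives $\mathbf{B}_{-2}(\L) \cong \Z/2$, and a further contraction yields $\mathbf{B}_{-3}(\L) = (\Z/2)_{-1} = 0$ because the contraction functor annihilates constant sheaves. The degree-$3$ term of the twisted Rost--Schmid complex of $\mathbf{B}(\L)$ is a direct sum of sections of $\mathbf{B}_{-3}(\L)$ at codimension-$3$ points of $X$, so it vanishes and the claim follows.

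The identification $Ch^3(X) \cong H^3_{\Nis}(X,\K^Q_3/2)$ is deduced from the long exact sequence attached to $0\to \K^Q_3 \stackrel{\cdot 2}{\longrightarrow} \K^Q_3 \to \K^Q_3/2 \to 0$, the Bloch formula $H^3_{\Nis}(X,\K^Q_3) \cong CH^3(X)$, and the vanishing $H^4_{\Nis}(X,\K^Q_3) = 0$; the last holds because the degree-$4$ Rost--Schmid term involves sections of $(\K^Q_3)_{-4} = \K^Q_{-1} = 0$.

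For the remaining isomorphism $H^3_{\Nis}(X,\K^Q_3/2) \cong H^3_{\Nis}(X,\mathbf{A}(\L))$, the strategy is to compare the two Rost--Schmid complexes. By Lemma \ref{lem:contractionsAB}, $\mathbf{A}_{-2}(\L) \cong \K^Q_1/2$ via the contracted hyperbolic map; contracting once more gives $\mathbf{A}_{-3}(\L) \cong (\K^Q_1/2)_{-1} = \Z/2$, and a further contraction gives $\mathbf{A}_{-4}(\L) = 0$. Since the canonical $\K^{MW}_0$-action on $\K^Q_n/2$ and on $\Z/2$ factors through the rank homomorphism $\K^{MW}_0 \to \Z$ and is therefore trivial on these sheaves, the $\L$-twist in the Rost--Schmid complex of $\mathbf{A}(\L)$ collapses to the usual twist at each point, and the Rost--Schmid complexes of $\K^Q_3/2$ and $\mathbf{A}(\L)$ coincide in degrees $\geq 2$, with matching differentials by functoriality under $H_{3,2}$. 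The induced map on $H^3$ is then the desired isomorphism.

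The main subtlety is checking that the $\L$-twist indeed disappears in the Rost--Schmid complex of $\mathbf{A}(\L)$ in the relevant degrees; once the contractions $\mathbf{A}_{-n}(\L)$ for $n \geq 2$ are identified as $\gm$-equivariantly trivial $2$-torsion sheaves, the rest of the argument is a routine manipulation of Rost--Schmid complexes.
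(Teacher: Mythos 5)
Your proof is correct and follows essentially the same route as the paper: identify the relevant contractions of $\mathbf{A}(\L)$ and $\mathbf{B}(\L)$ using Lemma \ref{lem:contractionsAB}, then read off $H^3_{\Nis}$ from the degree-$3$ (and neighbouring) terms of the Rost--Schmid complex. The one place you go further than the paper's (very terse) argument is in making explicit why the $\L$-twist is invisible after contracting twice: you correctly observe that the $\K^{MW}_0$-action on the image of the hyperbolic map is trivial (essentially because $f(\langle a\rangle)=1$, so $H(\alpha)\cdot\langle a\rangle = H(\alpha\cdot f\langle a\rangle) = H(\alpha)$ by the projection formula), so $\mathbf{A}_{-n}(\L)$ is canonically identified with the untwisted sheaf for $n\geq 2$. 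The paper simply asserts the resulting isomorphism of $H^3$'s without spelling this out, and your handling of the twist is the right way to justify it. Your deduction $Ch^3(X)\cong H^3_{\Nis}(X,\K^Q_3/2)$ via the Bloch formula and the long exact sequence for multiplication by $2$ is also fine, although one can equally read it directly from the degree-$2$ and degree-$3$ terms of the Rost--Schmid complex of $\K^Q_3/2$, which are $\bigoplus_{x\in X^{(2)}}k(x)^\times/2$ and $\bigoplus_{x\in X^{(3)}}\Z/2$ with the mod-$2$ divisor differential.
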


\begin{proof}
Since $\mathbf{B}(\L)_{-2}\cong \Z/2$, it follows that $\mathbf{B}(\L)_{-3}=0$ and therefore $H^3_{\Nis}(X,\mathbf{B}(\L))=0$. Now the hyperbolic morphism induces an isomorphism $\K_1^Q/2\cong \mathbf{A}(\L)_{-2}$ and therefore an isomorphism $\Z/2\cong \mathbf{A}(\L)_{-3}$. It follows that $Ch^3(X)\cong H^3_{\Nis}(X,\K_3^Q/2)\cong H^3_{\Nis}(X,\mathbf{A}(\L))$.
\end{proof}

We now compute the group $H^2_{\Nis}(X,\mathbf{B}(\L))$.

\begin{lem}
The hyperbolic morphism $H_{2,1}:\K_2^Q\to \mathbf{GW}_2^1(\L)$ induces a morphism $H^\prime_{2,1}:\K_2^Q\to \mathbf{B}(\L)$.
\end{lem}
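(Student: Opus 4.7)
The plan is to establish that $H_{2,1}$ lands in the subsheaf $\mathbf{B}(\L) \subset \mathbf{GW}_2^1(\L)$. Since $\mathbf{B}(\L)$ is defined as the image of $\eta$ in the Karoubi periodicity sequence
\[
\xymatrix{\K_3^Q\ar[r]^-{H_{3,2}} & \mathbf{GW}_3^2(\L)\ar[r]^-{\eta} & \mathbf{GW}_2^1(\L)\ar[r]^-{f_{2,1}} & \K_2^Q},
\]
exactness gives $\mathbf{B}(\L) = \ker(f_{2,1})$. Thus the statement is equivalent to showing that the composite $f_{2,1} \circ H_{2,1}: \K_2^Q \to \K_2^Q$ is the zero morphism of sheaves.

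To prove this vanishing, I would apply Lemma \ref{lem:computationfH} with $i = 2$ and $j = 1$: on sections over any field $F$ of characteristic unequal to $2$, the composite $f_{2,1} \circ H_{2,1} \circ \mu_2 : K^M_2(F) \to K^Q_2(F)$ is multiplication by $1 + (-1)^{2+1} = 0$. By Matsumoto's theorem, the canonical map $\mu_2 : K^M_2(F) \to K^Q_2(F)$ is an isomorphism for every field $F$, so $f_{2,1} \circ H_{2,1}$ vanishes on sections over all (finitely generated) field extensions of the base field.

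Finally, the composite $f_{2,1} \circ H_{2,1}$ is a morphism between strictly $\aone$-invariant sheaves, which are in particular unramified in the sense recalled in Section \ref{section:aonehomotopystablerange} (following \cite[Definition 2.1]{MField}). By the unramified property quoted there, a morphism of strictly $\aone$-invariant sheaves is zero if and only if its restrictions to sections over field extensions vanish; hence $f_{2,1} \circ H_{2,1} = 0$ as a morphism of sheaves, and the factorization $H'_{2,1}: \K_2^Q \to \mathbf{B}(\L)$ exists. The only nontrivial ingredient is the vanishing of $f_{2,1} \circ H_{2,1}$ on Milnor K-theory, which is precisely Lemma \ref{lem:computationfH}; everything else is a formal consequence of the Karoubi exact sequence, Matsumoto's theorem, and the unramifiedness of strictly $\aone$-invariant sheaves.
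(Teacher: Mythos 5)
Your proposal is correct and follows exactly the same route as the paper: reduce to showing $f_{2,1}\circ H_{2,1}=0$ via the exactness of the Karoubi periodicity sequence, then invoke Lemma \ref{lem:computationfH} (with $i=2$, $j=1$) together with Matsumoto's theorem to get vanishing on fields, and conclude by unramifiedness of strictly $\aone$-invariant sheaves. The paper's version is simply more terse; you have filled in the same steps.
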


\begin{proof}
By definition of $\mathbf{B}(\L)$, it suffices to show that the composite
\[
\xymatrix{\K_2^Q\ar[r]^-{H_{2,1}} & \mathbf{GW}_2^1(\L)\ar[r]^-{f_{2,1}} & \K_2^Q}
\]
is trivial. By Matsumoto's theorem, the morphism of sheaves $\mu_2:\K_2^M\to\K_2^Q$ is an isomorphism, and it follows therefore from Lemma \ref{lem:computationfH} that $f_{2,1}H_{2,1}=0$ after evaluating at $k(X)$. Thus $f_{2,1}H_{2,1}=0$.
\end{proof}

\begin{lem}\label{lem:H2XB}
The morphism $H^\prime_{2,1}:\K_2^Q\to \mathbf{B}(\L)$ yields an isomorphism
\[
Ch^2(X)\cong H^2_{\Nis}(X,\K_2^Q/2)\to H^2_{\Nis}(X,\mathbf{B}(\L)).
\]
\end{lem}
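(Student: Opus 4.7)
The strategy is to identify both sides as cohomology of explicit Rost--Schmid complexes in low codimensions, and then compare them via the morphism induced by $H^\prime_{2,1}$. First, I would identify $Ch^2(X)$ with $H^2_{\Nis}(X,\K_2^Q/2)$ using Matsumoto's theorem, which provides $\K_2^M\cong \K_2^Q$, combined with Bloch's formula $CH^2(X)\cong H^2_{\Nis}(X,\K_2^M)$. Reducing modulo $2$ via the short exact sequence $0\to\K_2^M\xrightarrow{\cdot 2}\K_2^M\to\K_2^M/2\to 0$ and observing that $H^3_{\Nis}(X,\K_2^M)=0$ (which follows from $(\K_2^M)_{-3}=0$ via the Rost--Schmid complex), one obtains $Ch^2(X)\cong H^2_{\Nis}(X,\K_2^M/2)\cong H^2_{\Nis}(X,\K_2^Q/2)$.

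Next, I would analyze the Rost--Schmid complex of $\mathbf{B}(\L)$ in low codimensions. By Lemma \ref{lem:contractionsAB}, $\mathbf{B}(\L)_{-2}\cong \Z/2$; since the constant sheaf $\Z/2$ has trivial $(-1)$-contraction, it follows that $\mathbf{B}(\L)_{-3}=0$. Thus the codim-$3$ term of the Rost--Schmid complex of $\mathbf{B}(\L)$ vanishes, and $H^2_{\Nis}(X,\mathbf{B}(\L))$ equals the cokernel of the codim-$1$ differential $\bigoplus_{y\in X^{(1)}}\mathbf{B}(\L)_{-1}(k(y))\to\bigoplus_{x\in X^{(2)}}\Z/2$. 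Analogously, $H^2_{\Nis}(X,\K_2^Q/2)$ equals the cokernel of $\bigoplus_y \K_1^M/2(k(y))\to\bigoplus_{x\in X^{(2)}}\Z/2$. The morphism $H^\prime_{2,1}$ then induces a morphism of Rost--Schmid complexes, and by the proof of Lemma \ref{lem:contractionsAB}, the induced map on codim-$2$ coefficients $(\K_2^Q)_{-2}=\Z\to\mathbf{B}(\L)_{-2}=\Z/2$ is reduction mod $2$. In particular, it factors through the mod-$2$ quotient and induces an isomorphism on codim-$2$ terms, giving a well-defined surjection $Ch^2(X)\twoheadrightarrow H^2_{\Nis}(X,\mathbf{B}(\L))$.

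The main obstacle is injectivity of this surjection, equivalently, showing that the images of the codim-$1$ Rost--Schmid differentials for $\K_2^Q/2$ and $\mathbf{B}(\L)$ in $\bigoplus_{x\in X^{(2)}}\Z/2$ coincide. One containment is automatic from commutativity of the morphism of Rost--Schmid complexes. For the reverse, I would use Proposition \ref{prop:contractionofgw} together with Remark \ref{rem:karoubiperiodicitycontractions} to describe $\mathbf{B}(\L)_{-1}$ explicitly via Karoubi periodicity (as a quotient of $\mathbf{GW}_2^1(\L)$ by the image of $H_{2,1}$), and then show that the residue of any local section of $\mathbf{B}(\L)_{-1}$ at a codim-$2$ point is always realized by the residue of a suitable mod-$2$ Milnor symbol. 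This reduces the verification to the standard description of Rost--Schmid residues for Milnor K-theory, together with the compatibility of the hyperbolic and forgetful maps with residue homomorphisms.
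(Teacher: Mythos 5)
Your first two steps match the paper: the identification $Ch^2(X)\cong H^2_{\Nis}(X,\K_2^Q/2)$ and the observation that $(H'_{2,1})_{-2}:\Z/2\to\mathbf{B}(\L)_{-2}$ is an isomorphism are exactly right and give the surjectivity of the induced map $Ch^2(X)\twoheadrightarrow H^2_{\Nis}(X,\mathbf{B}(\L))$ on cokernels of the codimension-$1$ differentials. The gap is in your treatment of injectivity. The clean way to finish is to show that the map on the codimension-$1$ terms of the Rost--Schmid complex, namely $(H'_{2,1})_{-1}:\K_1^Q/2\to\mathbf{B}(\L)_{-1}$, is already an isomorphism; once you have isomorphisms in both codimension $1$ and codimension $2$, the induced map on $H^2$ is an isomorphism by functoriality of cokernels. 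This is what the paper does: from the contracted Karoubi sequence $\K_1^Q\xrightarrow{H_{1,0}}\mathbf{GW}_1^0(\L)\xrightarrow{\eta}\mathbf{GW}_0^3(\L)\xrightarrow{f_{0,3}}\Z$ together with $\K_2^Q\xrightarrow{H_{2,1}}\mathbf{GW}_2^1(\L)\xrightarrow{\eta}\mathbf{GW}_1^0(\L)\xrightarrow{f_{1,0}}\K_1^Q$, one has $\mathbf{B}(\L)_{-1}=\ker f_{1,0}$ and $\mathbf{A}(\L)_{-2}=\operatorname{im} H_{1,0}\cong\K_1^Q/2\subset\ker f_{1,0}$ (using $f_{1,0}H_{1,0}=0$ from Lemma~\ref{lem:computationfH}); since $\operatorname{coker} H_{1,0}\cong\Z/2$ is identified with $\operatorname{im} f_{1,0}$, the inclusion $\mathbf{A}(\L)_{-2}\subset\ker f_{1,0}$ is an equality, which is precisely the claim.

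Your proposed substitute --- describing $\mathbf{B}(\L)_{-1}$ as $\mathbf{GW}_2^1(\L)/\operatorname{im} H_{2,1}$ and then matching residues pointwise at codimension-$2$ points --- does not, as phrased, give what you need. To show that the images of the two codimension-$1$ differentials coincide inside $\bigoplus_{x\in X^{(2)}}\Z/2$, it is not enough to realize, for each $b\in\mathbf{B}(\L)_{-1}(k(y))$ and each $x$, \emph{some} symbol whose residue at $x$ agrees with $\partial_{y,x}(b)$: a priori the symbol may depend on $x$, and you need a single element of $\K_1^M/2(k(y))$ mapping to $b$ so that \emph{all} residues out of $y$ agree simultaneously. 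But that is exactly surjectivity of $(H'_{2,1})_{-1}$ over $k(y)$, which is the statement you should be proving directly. The quotient description of $\mathbf{B}(\L)_{-1}$ is also less convenient than the kernel description $\ker f_{1,0}\subset\mathbf{GW}_1^0(\L)$, since the latter sits naturally alongside $\mathbf{A}(\L)_{-2}=\operatorname{im} H_{1,0}$ and the comparison is a one-line exactness argument. So the missing ingredient in your proposal is the identification $\mathbf{A}(\L)_{-2}=\mathbf{B}(\L)_{-1}$ inside $\mathbf{GW}_1^0(\L)$, which is what makes $(H'_{2,1})_{-1}$ an isomorphism and closes the argument.
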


\begin{proof}
By Lemma \ref{lem:contractionsAB} (or rather its proof), we know that $(H^\prime_{2,1})_{-2}$ induces an isomorphism $\Z/2\cong \mathbf{B}(\L)_{-2}$. The same lemma shows that $H_{1,0}$ yields a monomorphism $\K_1^Q/2\to \mathbf{GW}_{1}^0(\L)$ whose cokernel is isomorphic to $\Z/2$. This cokernel is identified with the image of $f_{1,0}:\mathbf{GW}_1^0(\L)\to \K_1^Q$. It follows that $(H^\prime_{2,1})_{-1}$ yields an isomorphism $\K_1^Q/2\cong \mathbf{B}(\L)_{-1}$.
\end{proof}

Combining Lemmas \ref{lem:H3XA} and \ref{lem:H2XB}, we obtain the following result, which is a key tool in the proof of Theorem \ref{thm:classificationrank2algclosed}.

\begin{prop}
\label{prop:gw23bound}
If $k$ is a field having characteristic unequal to $2$, $X$ is a smooth $k$-scheme and $\L$ is a line bundle on $X$, then there is an exact sequence of the form
\begin{equation}\label{eqn:bounding}
\xymatrix{Ch^2(X)\ar[r]^-\partial & Ch^3(X)\ar[r] &  H^3_{\Nis}(X,\mathbf{GW}_3^2(\L))\ar[r] &  0.}
\end{equation}
\end{prop}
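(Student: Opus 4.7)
The plan is to read off the exact sequence from the long exact sequence in Nisnevich cohomology associated with the short exact sequence of sheaves
\[
0 \longrightarrow \mathbf{A}(\L) \longrightarrow \mathbf{GW}_3^2(\L) \longrightarrow \mathbf{B}(\L) \longrightarrow 0
\]
established in \eqref{eqn:exactI}, and then invoke the identifications of the relevant cohomology groups already proved in Lemmas \ref{lem:H3XA} and \ref{lem:H2XB}.

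Concretely, I would first write down the relevant segment of the long exact sequence, namely
\[
H^2_{\Nis}(X,\mathbf{B}(\L)) \longrightarrow H^3_{\Nis}(X,\mathbf{A}(\L)) \longrightarrow H^3_{\Nis}(X,\mathbf{GW}_3^2(\L)) \longrightarrow H^3_{\Nis}(X,\mathbf{B}(\L)).
\]
Then by Lemma \ref{lem:H3XA} the rightmost group vanishes, and the same lemma identifies $H^3_{\Nis}(X,\mathbf{A}(\L))$ with $Ch^3(X)$ via the hyperbolic morphism. Similarly, Lemma \ref{lem:H2XB} identifies $H^2_{\Nis}(X,\mathbf{B}(\L))$ with $Ch^2(X)$. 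Substituting these identifications into the long exact sequence yields the desired exact sequence
\[
Ch^2(X) \stackrel{\partial}{\longrightarrow} Ch^3(X) \longrightarrow H^3_{\Nis}(X,\mathbf{GW}_3^2(\L)) \longrightarrow 0,
\]
where the connecting map $\partial$ is the one induced from the boundary of the short exact sequence of sheaves under the above identifications.

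Given that all the technical content has already been absorbed into the preceding lemmas (the contraction computations for $\mathbf{A}(\L)$ and $\mathbf{B}(\L)$, and the Rost--Schmid description of cohomology of strictly $\aone$-invariant sheaves in terms of contractions), there is no real obstacle: the proof is a one-line assembly of the pieces. The only minor point to verify is that one is allowed to take Nisnevich cohomology of the short exact sequence, which is standard since we are working with sheaves of abelian groups. If anything, the content that might warrant further comment is the precise description of $\partial$, but the statement only asserts its existence and exactness of the resulting sequence, so nothing further is needed.
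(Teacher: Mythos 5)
Your proof is exactly the argument the paper gives: the paper states Proposition \ref{prop:gw23bound} as an immediate consequence of Lemmas \ref{lem:H3XA} and \ref{lem:H2XB}, applied to the long exact cohomology sequence arising from the short exact sequence \eqref{eqn:exactI}. Your write-up simply makes that one-line assembly explicit, which is fine.
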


\subsubsection*{Computing $H^3_{\Nis}(X,\mathbf{GW}_3^2(\L))$}
It is possible to compute the map $\partial:Ch^2(X)\to Ch^3(X)$ of Proposition \ref{prop:gw23bound} explicitly. The discussion of the remainder of this section is devoted to providing an explicit description of this map, which is not strictly necessary in the sequel.

To state our result, recall first from \cite[\S 8]{Brosnan03} that one can define Steenrod squaring operations $Sq^2:Ch^n(X)\to Ch^{n+1}(X)$ for any $n\in\mathbb{N}$ satisfying a number of useful properties. In particular, if $f:Y\to X$ is a proper morphism of smooth connected schemes, the formula \cite[8.10, 8.11, 9.4]{Brosnan03}:
\begin{equation}\label{eqn:steenrod}
Sq^2(f_*[Y])=c_1(\omega_{X/k})f_*([Y])-f_*(c_1(\omega_{Y/k}))
\end{equation}
holds, where $\omega_ {X/k}$ (resp. $\omega_ {Y/k}$) is the canonical sheaf of $X$ over $\Spec k$ (resp. $Y$ over $\Spec k$).  In fact, as Totaro explains \cite[Proof of Theorem 1.1]{Totaro}, this property characterizes $Sq^2$. Given a line bundle $\L$ on $X$, we denote by $c_1(\L)$ its class in $Ch^1(X)$. We can then twist the Steenrod operations to obtain a new operation $Sq^2_{\L}:Ch^n(X)\to Ch^{n+1}(X)$ defined by $Sq^2_{\L}(\alpha)=Sq^2(\alpha)+ c_1(\L) \cdot \alpha$.

\begin{thm}
\label{thm:cohomologyofksp3}
If $X$ is a smooth scheme over a field $k$ having characteristic different from $2$, then there is an exact sequence of the form
\[
\xymatrix{Ch^{2}(X)\ar[r]^-{Sq^2_{\L}} & Ch^3(X)\ar[r] & H^3_{\Nis}(X,\mathbf{GW}_3^2(\L))\ar[r] & 0.}
\]
\end{thm}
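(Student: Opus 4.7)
The proof reduces, via Proposition \ref{prop:gw23bound}, to identifying the connecting homomorphism $\partial: Ch^2(X) \to Ch^3(X)$ appearing in (\ref{eqn:bounding}) with the twisted Steenrod operation $Sq^2_{\L}$. My plan is to invoke Totaro's characterization: identity (\ref{eqn:steenrod}) uniquely pins down $Sq^2$ among natural transformations of Chow groups mod $2$ commuting with proper pushforwards from smooth schemes. Since $Sq^2_{\L}$ differs from $Sq^2$ only by the addition of multiplication by $c_1(\L)$, it is enough to show that $\partial$ satisfies the analogous twisted pushforward formula
\[
\partial(f_*[Y]) = c_1(\omega_{X/k}\otimes \L) \cdot f_*[Y] - f_*\bigl(c_1(\omega_{Y/k})\bigr)
\]
for every proper morphism $f: Y\to X$ of smooth connected schemes.

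Naturality of $\partial$ under pullback along arbitrary morphisms of smooth schemes is automatic from the functoriality of the long exact Nisnevich cohomology sequence attached to (\ref{eqn:exactI}), combined with the identifications $H^2_{\Nis}(X,\mathbf{B}(\L)) \cong Ch^2(X)$ and $H^3_{\Nis}(X,\mathbf{A}(\L)) \cong Ch^3(X)$ supplied by Lemmas \ref{lem:H2XB} and \ref{lem:H3XA} (both induced by hyperbolic maps and hence functorial). To derive the displayed formula, I would work with the twisted Rost-Schmid resolutions of $\mathbf{A}(\L)$, $\mathbf{GW}_3^2(\L)$ and $\mathbf{B}(\L)$, whose degree-$n$ terms are sums over $x\in X^{(n)}$ of Grothendieck-Witt groups of residue fields with duality twisted by $\omega_x^{\L}=\mathrm{Ext}^n_{\O_{X,x}}(k(x),\L\otimes \O_{X,x})$. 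Chasing the Karoubi periodicity sequence (\ref{eqn:Karoubi}) termwise through its contractions, the boundary $\partial$ unpacks into an explicit Bockstein-type residue map: the twist $\omega_x^{\L}$ carries simultaneously the canonical sheaf (producing the $\omega_{X/k}$ factor expected from Totaro's formula) and the auxiliary line bundle $\L$ (producing the correction by $c_1(\L)$).

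The main obstacle will be the bookkeeping at this last step: tracking how the duality twist by $\omega_x^{\L}$ at each codimension-$n$ point, combined with the identifications $\mathbf{A}(\L)_{-2} \cong \K_1^Q/2$ and $\mathbf{B}(\L)_{-2} \cong \Z/2$ from Lemma \ref{lem:contractionsAB}, conspires to produce precisely the advertised coefficient $c_1(\omega_{X/k}\otimes \L)$ rather than, say, $c_1(\omega_{X/k})$ alone. Once the displayed formula is verified, Totaro's uniqueness applied to the natural transformation $\partial - c_1(\L)(\cdot)$ identifies it with $Sq^2$, whence $\partial = Sq^2_{\L}$, completing the proof.
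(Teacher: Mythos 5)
Your overall strategy matches the paper's: reduce via Proposition \ref{prop:gw23bound} to computing the connecting map $\partial$, then identify it with $Sq^2_{\L}$ via the Brosnan/Totaro characterization (\ref{eqn:steenrod}). But the step you flag as ``the main obstacle'' — the bookkeeping that actually produces the coefficient $c_1(\omega_{X/k}\otimes\L)$ — is precisely the entire content of the proof, and your proposed route for it (``chasing the Karoubi periodicity sequence termwise through its contractions'' in the Gersten complex) is not how the paper navigates that calculation and would, as stated, be quite hard to push through.

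The missing idea is a codimension-shift. Since $Ch^2(X)$ is generated by classes $[Y]$ of integral codimension-$2$ subvarieties, one replaces $Y$ by its normalization $C$ (smooth after throwing away a closed subset of $X$ of codimension $\geq 4$), and uses Morel's finite pushforward \cite[Corollary 5.30]{MField}
\[
i_* : H^n_{\Nis}\bigl(C, \mathbf{F}(\N\otimes\L)_{-2}\bigr) \longrightarrow H^{n+2}_{\Nis}(X,\mathbf{F}(\L)),
\qquad \N := \omega_{C/k}\otimes i^*\omega_{X/k}^{\vee},
\]
to reduce the whole computation to the boundary map
\[
\partial' : \Z/2 \cong H^0_{\Nis}\bigl(C,\mathbf{B}(\N\otimes\L)_{-2}\bigr) \longrightarrow H^1_{\Nis}\bigl(C,\mathbf{A}(\N\otimes\L)_{-2}\bigr) \cong Pic(C)/2
\]
on $C$ itself, since $i_*(1) = [Y]$. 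This is the crucial move: it collapses a codimension-$3$ boundary computation on $X$ into a codimension-$1$ computation on $C$, where one can get one's hands on things. The paper then shows $\partial'(1)= c_1(\N\otimes\L)$ by a two-case argument: if $c_1(\N\otimes\L)=0$ in $Pic(C)/2$ then $\mathbf{GW}^0_1(\N\otimes\L)\cong\mathbf{GW}^0_1$ and the sequence of Lemma \ref{lem:contractionsAB} splits, so $\partial'=0$; otherwise, exactness of (\ref{eqn:exactII}) reduces one to showing $c_1(\N\otimes\L)$ dies in $H^1_{\Nis}(C,\mathbf{GW}^0_1(\N\otimes\L))$, which is done by comparing the Bloch-Ogus filtrations on $\tilde K_0(C)$ and $F^1 GW_0^0(C,\N\otimes\L)$ via the hyperbolic functor and checking that the class of $[\N\otimes\L]-[\O_C]$ maps to zero. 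Finally the projection formula together with (\ref{eqn:steenrod}) converts $i_*\bigl(c_1(\N\otimes\L)\bigr)$ into $Sq^2_{\L}([Y])$.

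Without the reduction to $C$ and the explicit Bloch-Ogus/hyperbolic-functor computation, there is no proof — your proposal correctly identifies the target formula and the characterization of $Sq^2$, but leaves the entire verification as an unexamined ``bookkeeping'' step. You should also note that, strictly speaking, Totaro's uniqueness statement isn't needed here in full strength: once $\partial([Y])$ is computed explicitly on each generator $[Y]=i_*[C]$, additivity and the defining formula (\ref{eqn:steenrod}) already give the identification.
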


To prove the theorem, we first observe that $Ch^2(X)$ is generated by the classes of integral subvarieties of codimension $2$. Let then $Y$ be such a subvariety, and let $C$ be its normalization. The morphism $i:C\to Y\subset X$ is finite. The singular locus of $C$ is of codimension $\geq 2$ and its image $T$ under $i$ (which is closed) is of codimension $\geq 4$. Considering $X\setminus T$ and $C\setminus i^{-1}(T)$ instead of $X$ and $C$, we see that we can suppose that the normalization $C$ of $Y$ is smooth. Since $i$ is finite, \cite[Corollary 5.30]{MField} yields homomorphisms
\[
i_*:H^n_{\Nis}(C,\mathbf{F}(\N\otimes\L)_{-2})\to H^{n+2}_{\Nis}(X,\mathbf{F}(\L))
\]
for any strictly $\aone$-invariant sheaf $\mathbf{F}$ (that is already a contraction) and any $n\in\mathbb{N}$, where $\N=\omega_{C/k}\otimes i^*\omega_{X/k}^\vee$ and $\omega_{C/k}$ (resp. $\omega_{X/k}$) is the canonical sheaf of $C$ (resp. $X$). The exact sequence (\ref{eqn:exactI}) then yields a commutative diagram
\[
\xymatrix{H^0_{\Nis}(C,\mathbf{B}(\N\otimes\L)_{-2})\ar[r]^-{\partial^\prime}\ar[d] & H^1_{\Nis}(C,\mathbf{A}(\N\otimes\L)_{-2})\ar[r]\ar[d] & H^1_{\Nis}(C,\mathbf{GW}_1^0(\N\otimes\L))\ar[d] \\
H^2_{\Nis}(X,\mathbf{B}(\L))\ar[r]\ar@{=}[d] & H^3_{\Nis}(X,\mathbf{A}(\L))\ar[r]\ar@{=}[d] & H^3_{\Nis}(X,\mathbf{GW}_3^2(\L)) \\
Ch^2(X)\ar[r]_-{\partial} & Ch^3(X) & }
\]
Now, Lemma \ref{lem:contractionsAB} shows that $\mathbf{A}(\N\otimes\L)_{-2}\cong \K_1^Q/2$ and $\mathbf{B}(\N\otimes\L)_{-2}\cong \Z/2$. There are therefore canonical identifications $H^0_{\Nis}(C,\mathbf{B}(\N\otimes\L)_{-2})\cong \Z/2$ and $H^1_{\Nis}(C,\mathbf{A}(\N\otimes\L)_{-2})\cong H^1_{\Nis}(C,\K_1^Q/2)\cong Pic(C)/2$. The map $i_*:H^0_{\Nis}(C,\mathbf{B}(\N\otimes\L)_{-2})\to H^2_{\Nis}(X,\mathbf{B}(\L))$ satisfies then $i_*(1)=[Y]$. It follows that it is sufficient to compute $\partial^\prime$ in order to identify $\partial$.

\begin{prop}
Under the previous identifications, the homomorphism
\[
\partial^\prime:\Z/2\cong H^0_{\Nis}(C,\mathbf{B}_{-2}(\N\otimes\L))\to H^1_{\Nis}(C,\mathbf{A}_{-2}(\N\otimes\L))\cong Pic(C)/2
\]
satisfies $\partial^\prime(1)=c_1(\N \tensor \L)$.
\end{prop}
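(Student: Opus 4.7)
My plan is to compute $\partial'$ as the connecting homomorphism in the long exact cohomology sequence attached to the twice-contracted short exact sequence on $C$,
\[
0 \longrightarrow \mathbf{A}_{-2}(\N\otimes\L) \longrightarrow \mathbf{GW}_1^0(\N\otimes\L) \longrightarrow \mathbf{B}_{-2}(\N\otimes\L) \longrightarrow 0,
\]
which by Lemma \ref{lem:contractionsAB} is identified with
\[
0 \longrightarrow \K_1^Q/2 \longrightarrow \mathbf{GW}_1^0(\N\otimes\L) \longrightarrow \Z/2 \longrightarrow 0;
\]
the endpoint sheaves are canonically untwisted because the $\gm$-action on both $\K_1^Q/2$ and $\Z/2$ is trivial (the action factors through $\K_0^{MW}$, and $\eta$ annihilates Milnor $K$-theory and $\Z/2$). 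Consequently $\partial'(1)\in H^1_{\Nis}(C,\K_1^Q/2)\cong \Pic(C)/2$ is precisely the extension class of this short exact sequence of sheaves on $C$.

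To realize this class concretely, I would take a cover of $C$ by affine opens $\{U_i\}$ trivializing $\N\otimes\L$ via $\phi_i:(\N\otimes\L)|_{U_i}\isomto\O_{U_i}$, with transition functions $g_{ij}=\phi_i\phi_j^{-1}\in\O(U_{ij})^\times$ representing $c_1(\N\otimes\L)$. Each $\phi_i$ yields an isomorphism $\mathbf{GW}_1^0(\N\otimes\L)|_{U_i}\cong\mathbf{GW}_1^0|_{U_i}$, and the untwisted splitting of Lemma \ref{lem:contractionsAB} (by the determinant morphism) then produces local lifts $s_i\in\mathbf{GW}_1^0(\N\otimes\L)(U_i)$ of $1\in\Z/2$. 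The differences $(s_i-s_j)\in\K_1^Q/2(U_{ij})$ assemble into a \v{C}ech $1$-cocycle representing $\partial'(1)$.

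The heart of the argument is the identification $s_i-s_j=[g_{ij}]\in\O(U_{ij})^\times/(\O(U_{ij})^\times)^2=\K_1^Q/2(U_{ij})$. Stalkwise this reduces to the following statement: changing the trivialization of $\N\otimes\L$ by a unit $u$ scales the underlying symmetric bilinear form by $u$, and under the hyperbolic inclusion $H_{1,0}:\K_1^Q/2\hookrightarrow\mathbf{GW}_1^0$ the resulting discrepancy between the two local splittings of $1\in\Z/2$ is exactly $[u]\in F^\times/(F^\times)^2$. Granted this, the \v{C}ech cocycle $(g_{ij})$ represents $c_1(\N\otimes\L)\in\Pic(C)/2$, yielding $\partial'(1)=c_1(\N\otimes\L)$.

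The principal obstacle is the stalk-level computation above: making precise how the hyperbolic morphism and the (twisted) determinant map interact under a change of trivialization of the twisting line bundle. A more formal back-up route is to use naturality in the line bundle: the assignment $\mathscr{M}\mapsto\partial'_{\mathscr{M}}(1)$ is functorial in $\mathscr{M}=\N\otimes\L$, and compatibility of Karoubi periodicity with tensor products of line-bundle-valued forms should promote it to a group homomorphism $\Pic(C)\to\Pic(C)/2$. It vanishes on $\O_C$ by the splitting in Lemma \ref{lem:contractionsAB}, and matching on a single nontrivial example (e.g.\ $\mathscr{M}=\O(p)$ for a closed point $p$) then identifies it with reduction modulo $2$.
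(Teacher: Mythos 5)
Your strategy—identifying $\partial'(1)$ with the extension class of the twice-contracted short exact sequence and computing that class via a \v{C}ech cocycle of local splittings—is a legitimate approach in principle, and the setup is correct (the endpoint sheaves $\K_1^Q/2$ and $\Z/2$ are indeed untwisted). But the proof has a genuine gap: the identification $s_i - s_j = [g_{ij}]$, which you yourself call ``the heart of the argument,'' is left unverified and flagged as the ``principal obstacle.'' This is precisely where the content lies. One must track how the untwisted splitting from Lemma~\ref{lem:contractionsAB} transforms under the $\K^{MW}_0$-action that defines the twist; since $\langle u\rangle - 1 = \eta[u]$ in $\K_0^{MW}$, the discrepancy is governed by the interaction of $\eta$ with the chosen section of the determinant, and the claim that this discrepancy is $[u]\in F^\times/(F^\times)^2$ under $H_{1,0}$ is an assertion in need of proof, not something that follows automatically from ``scaling the bilinear form by $u$.''

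The proposed fallback does not repair this. First, you assume without argument that $\mathscr{M}\mapsto\partial'_{\mathscr{M}}(1)$ is additive in $\mathscr{M}$, attributing this to an unstated ``compatibility of Karoubi periodicity with tensor products.'' Second, and more seriously, even granting additivity, a homomorphism $\Pic(C)\to\Pic(C)/2$ is not determined by its vanishing at $\O_C$ (trivially zero) and one nontrivial example: $\Pic(C)/2$ can be large (for instance $(\Z/2)^{2g}$ for a complex curve of genus $g$), and many homomorphisms agree on a single element. For comparison, the paper avoids any explicit cocycle or universality argument through a clean dichotomy: if $c_1(\N\otimes\L)=0$ in $\Pic(C)/2$, then $\N\otimes\L$ is a square, Panin--Walter untwists $\mathbf{GW}_1^0(\N\otimes\L)$, the sequence splits, and $\partial'(1)=0$; if $c_1(\N\otimes\L)\neq 0$, the paper shows $c_1(\N\otimes\L)\in\ker\bigl(H^1(\K_1^Q/2)\to H^1(\mathbf{GW}_1^0(\N\otimes\L))\bigr)$ via a hyperbolic-functor computation through the Bloch--Ogus spectral sequence, and since exactness forces this kernel to equal $\operatorname{im}(\partial')=\{0,\partial'(1)\}$, the nonzero class $c_1(\N\otimes\L)$ must be $\partial'(1)$. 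That argument leans on the fact that the source of $\partial'$ is $\Z/2$, which makes the image tiny, and it substitutes a membership statement (is $c_1$ in the kernel?) for the explicit local computation you would need to finish your route.
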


\begin{proof}
We first suppose that $c_1(\N\tensor \L)= 0$ in $Pic(C)/2$. There exists then a line bundle $\mathcal M$ and an isomorphism $\N\otimes\L\cong \mathcal M^{\otimes 2}$. In this case, we have an isomorphism $\mathbf{GW}_1^0(\N\otimes\L)\cong \mathbf{GW}_1^0$ by, e.g., \cite[Theorem 5.1]{PaninWalter1}. The sequence
\[
\xymatrix{0\ar[r] & \mathbf{A}(\N\otimes\L)_{-2}\ar[r] & \mathbf{GW}_1^0\ar[r] & \mathbf{B}(\N\otimes\L)_{-2}\ar[r] & 0}
\]
is then split by Lemma \ref{lem:contractionsAB} and therefore $\partial^\prime(1)=0$.

We can therefore assume that $c_1(\N\tensor \L) \neq 0$ in $Pic(C)/2$.  Observe first that the sequence
\begin{equation}\label{eqn:exactII}
\xymatrix{\Z/2\cong H^0_{\Nis}(C,\mathbf{B}(\N\otimes\L)_{-2})\ar[r]^-{\partial^\prime} & H^1_{\Nis}(C,\mathbf{A}(\N\otimes\L)_{-2})\ar[r] & H^1_{\Nis}(C,\mathbf{GW}_1^0(\N\otimes\L))}
\end{equation}
is exact. It suffices thus to prove that $c_1(\L\tensor \N)$ belongs to the kernel of the map $H^1_{\Nis}(C,\K_1^Q/2)\cong H^1_{\Nis}(C,\mathbf{A}(\N\otimes\L)_{-2})\to H^1_{\Nis}(C,\mathbf{GW}_1^0(\N\otimes\L))$ in order to conclude. The Bloch-Ogus spectral sequence \cite[Proposition 3.9]{BlochOgus} (see also \cite[Theorem 25]{FaselSrinivas} in the context of Grothendieck-Witt groups and \cite[Theorem 5.4]{Quillen73} in the context of $K$-theory) yields homomorphisms
\[
F^1GW_0^0(C,\N\otimes\L)\to H^1_{\Nis}(C,\mathbf{GW}_1^0(\N\otimes\L))
\]
and
\[
\tilde K_0(C)\to Pic(C)
\]
where $F^1GW_0^0(C,\N\otimes\L)$ is the kernel of the localization map
\[
GW_0^0(C,\N\otimes\L)\to GW_0^0(k(C),\N\otimes\L)
\]
and $\tilde K_0(C)$ is the kernel of the rank homomorphism. Observe that $[\N\otimes\L]$ in $Pic(C)$ is the image of $[\N\otimes\L]-[\O_C]$ under the second map.

The hyperbolic functor gives a commutative diagram of the form
\[
\xymatrix{\tilde K_0(C)\ar[r]\ar[d]_{H_{0,0}}  & Pic(C)\ar[d] \\
F^1GW_0^0(C,\N\otimes\L)\ar[r] & H^1_{\Nis}(C,\mathbf{GW}_1^0(\N\otimes\L))},
\]
and the right-hand vertical map is the composite
\[
Pic(C)\to Pic(C)/2\cong H^1_{\Nis}(C,\K_1^Q/2)\cong H^1_{\Nis}(C,\mathbf{A}(\N\otimes\L)_{-2})\to H^1_{\Nis}(C,\mathbf{GW}_1^0(\N\otimes\L)).
\]
If $\mathcal M$ is a line bundle over $C$, then the image of $[\mathcal M]-[\O_C]$ under the left-hand vertical map is
\[
[\mathcal M\oplus (\mathcal M^\vee\otimes \N\otimes\L),h_{\mathcal M}]-[\O_C\oplus (\N\otimes\L),h_{\O_C}],
\]
where $h_{\mathcal M}$ and $h_{\mathcal O_C}$ are the usual hyperbolic forms.

It is straightforward to check that $[\N\otimes\L]-[\O_C]$ vanishes under this map. It follows that $c_1(\L\tensor \N)$ belongs to the kernel of $H^1_{\Nis}(C,\K_1^Q/2)\cong H^1_{\Nis}(C,\mathbf{A}(\L)_{-2})\to H^1_{\Nis}(C,\mathbf{GW}_1^0(\L))$, giving $\partial^\prime(1)= c_1(\L\tensor \N)$.
\end{proof}

To conclude the proof of Theorem \ref{thm:cohomologyofksp3}, it suffices now to observe that the diagram
\[
\xymatrix{\Z/2\ar[r]^-{\partial^\prime}\ar[d] & Pic(C)/2\ar[d] \\
Ch^2(X)\ar[r]_-{Sq^2_\L} & Ch^3(X)}
\]
commutes by the formula (\ref{eqn:steenrod}) defining Steenrod operations and the projection formula.

\section{Vanishing theorems}
\label{section:vanishing}
In this section, we prove a number of cohomological vanishing results that will be used in Section \ref{section:obstructionsandclassification} to provide explicit descriptions of sets of isomorphism classes of vector bundles.

\subsubsection*{Cohomology of $\K^{\MW}_j$ and line bundle twists}
Recall some notation from the beginning of Section \ref{section:aonehomotopystablerange}: $\mathbf{I}^n$ is the unramified sheaf (in the Nisnevich or Zariski topology) of the $n$-th power of the fundamental ideal, $\K^{MW}_n$ is the unramified Milnor-Witt K-theory sheaf.  If $\L$ is a line bundle over a smooth $k$-scheme $X$, $\mathbf{I}^n(\L)$ and $\K^{MW}_n(\L)$ are the corresponding $\L$-twisted sheaves (on the small Nisnevich site of $X$).  Furthermore, recall that if $\ell$ is a prime number, then $cd_{\ell}(k)$ is the smallest integer $n$ (or $\infty$) such that $H^i_{\et}(\Spec k,\F) = 0$ for $i > n$ and all $\ell$-torsion \'etale sheaves $\F$ over $\Spec k$.

\begin{prop}\label{prop:vanishingI}
Let $X$ be a smooth scheme of dimension $d$ over a field $k$ with $\mathrm{cd}_2(k)=r<\infty$ and let $\L$ be a line bundle on $X$. Then the Zariski sheaf $\mathbf{I}^{n}(\L)=0$ for any $n\geq r+d+1$.
\end{prop}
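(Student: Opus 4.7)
The sheaf $\mathbf{I}^n(\L)$ is a twisted form of the strictly $\aone$-invariant sheaf $\mathbf{I}^n$, hence it is unramified in the sense of \cite[Definition 2.1]{MField}. Consequently, for any open $U \subset X$, the restriction map $\mathbf{I}^n(\L)(U) \hookrightarrow \prod_{\eta} I^n(k(\eta), \L_\eta)$ is injective, where $\eta$ runs over the generic points of $U$ (alternatively, one may invoke the twisted Gersten--Witt complex, whose degree-zero term records exactly this product). It therefore suffices to prove that $I^n(k(\eta), \L_\eta) = 0$ for every generic point $\eta$ of (an open of) $X$. Since a line bundle over a field is trivial, $I^n(k(\eta), \L_\eta) \cong I^n(k(\eta))$, so the twist plays no role.

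The residue field $k(\eta)$ at a generic point of $X$ is a finitely generated extension of $k$ of transcendence degree at most $d = \dim X$. By the standard bound on cohomological dimension under finitely generated field extensions (Serre, \emph{Cohomologie Galoisienne}), one has
\[
\mathrm{cd}_2(k(\eta)) \leq \mathrm{cd}_2(k) + \mathrm{tr.deg}_k(k(\eta)) \leq r + d.
\]
For $n \geq r + d + 1$, this gives $\mathrm{cd}_2(k(\eta)) < n$, so in particular $H^n_{\et}(k(\eta), \Z/2) = 0$.

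Now by Voevodsky's resolution of the Milnor conjecture on quadratic forms, together with the norm residue isomorphism, there are identifications
\[
I^m(F)/I^{m+1}(F) \cong \K^M_m(F)/2 \cong H^m_{\et}(F, \Z/2)
\]
for every field $F$ of characteristic different from $2$ and every $m \geq 0$. Applying this with $F = k(\eta)$ and $m \geq n \geq r + d + 1$ yields $I^m(k(\eta)) = I^{m+1}(k(\eta))$ for all such $m$, i.e.\ the chain of ideals stabilizes from $I^n$ onward. The Arason--Pfister Hauptsatz states that $\bigcap_{m} I^m(F) = 0$ for every field $F$, and combining this with the previous stabilization forces $I^n(k(\eta)) = 0$.

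Putting everything together, $\mathbf{I}^n(\L)(U) = 0$ for all open $U \subset X$, so the Zariski sheaf $\mathbf{I}^n(\L)$ vanishes. The main substantive input is the vanishing $I^n(F) = 0$ for $n > \mathrm{cd}_2(F)$; the potentially delicate point to verify is the bound $\mathrm{cd}_2(k(\eta)) \leq r + d$, which in characteristic $\neq 2$ is a classical consequence of the Hochschild--Serre spectral sequence applied inductively to the tower of pure transcendental and finite extensions realizing $k(\eta)$ over $k$.
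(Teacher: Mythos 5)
Your proof is correct and follows essentially the same route as the paper's: reduce via unramifiedness to showing $I^n(k(X)) = 0$ at the generic point(s), strip the twist since a line bundle over a field is trivial, bound $\mathrm{cd}_2(k(X)) \leq r+d$ by Serre, apply the Milnor conjecture to conclude $\overline{I}^m = 0$ for $m \geq n$, and then finish with Arason--Pfister. The only cosmetic difference is the last step: you explicitly invoke the Hauptsatz $\bigcap_m I^m = 0$ together with the stabilization $I^n = I^{n+1} = \cdots$, whereas the paper cites Arason's \emph{Korollar 2} which packages this implication ($\overline{I}^n(F) = 0 \Rightarrow I^n(F) = 0$) directly; these are the same argument in substance.
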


\begin{proof}
By definition of $\mathbf{I}^{n}(\L)$ it is sufficient to prove that $I^n(k(X),\L\otimes k(X))=0$. Choosing a generator of $\L\otimes k(X)$ yields an isomorphism $I^n(k(X))\simeq I^n(k(X),\L\otimes k(X))$ and we can thus suppose that $\L$ is trivial.  Consider the quotient group $\overline I^n(k(X)):=I^n(k(X))/I^{n+1}(k(X))$. The affirmation of the Milnor conjecture yields an isomorphism $\overline I^n(k(X))\simeq H^n_{\mathrm{Gal}}(k(X),\mu_2^{\otimes n})$. The latter is trivial since $\mathrm{cd}_2(k(X))\leq r+d$ by \cite[\S 4.2, Proposition 11]{Serre94}. It follows then from \cite[Korollar 2]{Arason71} that $I^n(k(X))=0$.
\end{proof}

We now prove a yet stronger vanishing statement for ${\mathbf I}^{d+r}(\L)$.

\begin{prop}
\label{prop:vanishingII}
Let $X$ be a smooth affine scheme of dimension $d$ over a field $k$ with $\mathrm{cd}_2(k)=r<\infty$ and let $\L$ be a line bundle on $X$. If $d\geq 1$, then $H^d_{\Nis}(X,{\mathbf I}^{j}(\L))=0$ for any $j\geq d+r$. If $d\geq 2$ then we have $H^{d-1}_{\Nis}(X,{\mathbf I}^{j}(\L))=0$ for any $j\geq d+r$.
\end{prop}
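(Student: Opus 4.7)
The strategy is to reduce both vanishings to the case $j=d+r$ using Proposition~\ref{prop:vanishingI}, identify $\mathbf{I}^{d+r}(\L)$ with $\mathbf{K}^M_{d+r}/2$ via the Milnor conjecture, and then exploit étale cohomology through the Bloch--Ogus coniveau spectral sequence combined with Artin vanishing.

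\emph{Reduction step.} For $j\geq d+r+1$, Proposition~\ref{prop:vanishingI} already gives $\mathbf{I}^j(\L)=0$ as a Nisnevich sheaf, so the cohomology vanishes trivially. For $j=d+r$, the canonical short exact sequence
\[
0 \longrightarrow \mathbf{I}^{d+r+1}(\L) \longrightarrow \mathbf{I}^{d+r}(\L) \longrightarrow \mathbf{I}^{d+r}(\L)/\mathbf{I}^{d+r+1}(\L) \longrightarrow 0,
\]
together with the Milnor conjecture (Orlov--Vishik--Voevodsky) identifying $\mathbf{I}^{n}/\mathbf{I}^{n+1}\cong \mathbf{K}^M_n/2$, and the vanishing of the first term by Proposition~\ref{prop:vanishingI}, yields $\mathbf{I}^{d+r}(\L)\cong \mathbf{K}^M_{d+r}/2$. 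The twist by $\L$ disappears on the right because the $\gm$-action on $\mathbf{K}^M_j$ (induced from its description as a contraction) factors through multiplication by $\langle u\rangle=1+\eta[u]$, which is $1$ modulo $\eta$. It therefore suffices to show $H^d_{\Nis}(X,\mathbf{K}^M_{d+r}/2)=0$ (for $d\geq 1$) and $H^{d-1}_{\Nis}(X,\mathbf{K}^M_{d+r}/2)=0$ (for $d\geq 2$).

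\emph{Main computation.} The Bloch--Kato conjecture (proved for $\ell=2$ by Voevodsky) gives an isomorphism of Nisnevich sheaves $\mathbf{K}^M_n/2\cong \mathcal{H}^n_{\et}(\mu_2^{\otimes n})$, the sheafification of $U\mapsto H^n_{\et}(U,\mu_2^{\otimes n})$. The Bloch--Ogus coniveau spectral sequence then reads
\[
E_2^{p,q} = H^p_{\Nis}(X,\mathcal{H}^q_{\et}(\mu_2^{\otimes (d+r)})) \;\Longrightarrow\; H^{p+q}_{\et}(X,\mu_2^{\otimes (d+r)}).
\]
Two structural inputs drive the argument: (i) $E_2^{p,q}=0$ for $p>d=\dim X$ since the Bloch--Ogus Gersten complex has length $d$; and (ii) $\mathcal{H}^q_{\et}(\mu_2^{\otimes (d+r)})=0$ for $q\geq d+r+1$, because the generic stalk $H^q_{\et}(\Spec k(X),\mu_2^{\otimes (d+r)})$ vanishes once $q>\mathrm{cd}_2(k(X))\leq d+r$, and the higher terms of its Gersten resolution vanish by successive transcendence-degree drops. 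Together these imply that every $d_r$-differential entering or leaving $E_2^{d-i,d+r}$ with $i\in\{0,1\}$ has source or target zero, so $E_\infty^{d-i,d+r}=E_2^{d-i,d+r}$.

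\emph{Bounding the abutment and conclusion.} It remains to kill the abutment in the relevant total degrees. Hochschild--Serre $H^p(k,H^q_{\et}(X_{\bar k},\mu_2^{\otimes (d+r)}))\Rightarrow H^{p+q}_{\et}(X,\mu_2^{\otimes (d+r)})$, combined with Artin's vanishing theorem ($H^q_{\et}(X_{\bar k},\cdot)=0$ for $q>d$ on an affine variety of dimension $d$) and the hypothesis $\mathrm{cd}_2(k)=r$, forces $H^i_{\et}(X,\mu_2^{\otimes (d+r)})=0$ for $i>d+r$. When $d\geq 1$ (resp.\ $d\geq 2$), the total degree $2d+r$ (resp.\ $2d+r-1$) exceeds $d+r$, so the abutment vanishes and $E_\infty^{d-i,d+r}$ is a subquotient of zero. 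This yields $H^{d-i}_{\Nis}(X,\mathbf{K}^M_{d+r}/2)=0$, hence $H^{d-i}_{\Nis}(X,\mathbf{I}^{d+r}(\L))=0$. The main points requiring care are the triviality of the $\L$-twist on $\mathbf{K}^M_j/2$ (verified above via the action of $\langle u\rangle$) and the invocation of Bloch--Kato to pass from Milnor $K$-theory to étale cohomology; once these are accepted, the spectral sequence bookkeeping is routine.
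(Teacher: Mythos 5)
Your proposal is correct and follows essentially the same strategy as the paper's proof: reduce to $j=d+r$ via Proposition~\ref{prop:vanishingI}, identify $\mathbf{I}^{d+r}(\L)\cong\K^M_{d+r}/2\cong\mathcal{H}^{d+r}_{\et}(\mu_2^{\otimes(d+r)})$ using the Milnor conjecture, and extract the vanishing from the Bloch--Ogus coniveau spectral sequence together with the fact that $H^i_{\et}(X,\mu_2^{\otimes q})=0$ for $i>d+r$ on an affine $X$. The only cosmetic differences are that you make explicit the triviality of the $\L$-twist on $\K^M_*/2$ (the paper silently absorbs this by putting the untwisted $\overline{\mathbf{I}}^{d+r}$ in the exact sequence) and that you derive the étale vanishing via Hochschild--Serre plus Artin vanishing rather than citing the corresponding statement in Milne directly.
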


\begin{proof}
We prove the vanishing result for cohomology computed in the Zariski topology since the Gersten resolution implies that Zariski and Nisnevich cohomology coincide. By Proposition \ref{prop:vanishingI}, we are reduced to the case $j=d+r$.  The exact sequence of sheaves on the small Nisnevich site of $X$ \cite[\S 2.1]{Fasel09d}
\[
\xymatrix{0\ar[r] & {\mathbf I}^{d+r+1}(\L)\ar[r] & {\mathbf I}^{d+r}(\L)\ar[r] & \overline{\mathbf I}^{d+r}\ar[r] & 0}
\]
and Proposition \ref{prop:vanishingI} give an isomorphism of sheaves $\mathbf{I}^{d+r}(\L) \cong \overline{\mathbf{I}}^{d+r}$.  Using the affirmation of Milnor's conjecture on quadratic forms \cite{OVV} and Voevodsky's affirmation of the Milnor conjecture on the mod $2$ norm residue homomorphism \cite{VMod2}, there are isomorphisms $\overline{\mathbf{I}}^{d+r} \cong \K^{M}_{d+r}/2 \cong \mathcal{H}^{d+r}_{\et}(\mu_2^{d+r})$, where $\mathcal{H}^{i}_{\et}(\mu_2^{j})$ is the Nisnevich sheaf associated with the presheaf $U \mapsto H^i_{\et}(U,\mu_2^{\tensor j})$.  To establish the result in the statement, we will show that $H^{i}(X,\mathcal{H}^{d+r}_{\et}(\mu_2^{d+r})) = 0$ in the stated range.

To this end, consider the Bloch-Ogus spectral sequence \cite{BlochOgus} abutting to $H^*_{\et}(X,\mu_2^{\tensor d+r})$.  By the assumption on cohomological dimension, the lines resolving $\mathcal{H}^{j}_{\et}(\mu_2^{\tensor d+r})$ vanish for $j \geq d+r+1$ (by the assumption on cohomological dimension).  The analysis of the spectral sequence therefore yields an isomorphism
\[
H^{2d+r}_{\et}(X,\mu_2^{\tensor d+r}) \cong H^{d}_{\Nis}(X,\mathcal{H}^{d+r}_{\et}(\mu_2^{\tensor d+r}))
\]
and a surjection
\[
H^{2d+r-1}_{\et}(X,\mu_2^{\tensor d+r}) \longrightrightarrow H^{d-1}_{\Nis}(X,\mathcal{H}^{d+r}_{\et}(\mu_2^{\tensor d+r})).
\]
Now, since $X$ is affine, $H^i_{\et}(X,\mu_2^{\tensor q})=0$ for $i\geq d+r+1$ (\cite[Chapter VI, Theorem 7.2]{Milne} and (\cite[Chapter III, Theorem 2.20]{Milne}; see also \cite[Remark 2.21(b)]{Milne})).
\end{proof}

\begin{cor}
\label{cor:milnorwittmilnorcohomologycomparison}
If $k$ is a quadratically closed field, $X$ is a smooth affine $k$-scheme of dimension $d\geq 2$, and $\L$ is a line bundle on $X$, for any pair of integers $i,j\geq d-1$, there are isomorphisms
\[
H^{i}_{\Nis}(X,\K^{\MW}_j(\L)) \isomto H^i_{\Nis}(X,\K^M_j).
\]
\end{cor}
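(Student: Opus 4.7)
The natural approach is to compare $\K^{MW}_j(\L)$ and $\K^M_j$ via the canonical short exact sequence of strictly $\aone$-invariant sheaves
\[
0 \longrightarrow \mathbf{I}^{j+1}(\L) \longrightarrow \K^{MW}_j(\L) \longrightarrow \K^M_j \longrightarrow 0
\]
(the twisted analogue of the sequence recalled in Section \ref{section:aonehomotopynonstable}; the $\K^M_j$ quotient is unaffected by the $\L$-twist since the $\K^{MW}_0$-action on $\K^M_j$ factors through the rank homomorphism). Taking the long exact sequence in Nisnevich cohomology, I would reduce the claim to showing that
\[
H^i_{\Nis}(X,\mathbf{I}^{j+1}(\L)) = 0 \quad \text{and} \quad H^{i+1}_{\Nis}(X,\mathbf{I}^{j+1}(\L)) = 0
\]
whenever $i,j \geq d-1$.

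To verify these vanishings, I would use the assumption that $k$ is quadratically closed, which gives $r := \mathrm{cd}_2(k) = 0$, and then invoke Propositions \ref{prop:vanishingI} and \ref{prop:vanishingII}. If $j \geq d$, then $j+1 \geq d+1 = d+r+1$, so Proposition \ref{prop:vanishingI} forces the sheaf $\mathbf{I}^{j+1}(\L)$ itself to vanish and there is nothing left to check. The interesting case is the boundary $j = d-1$, where $j+1 = d = d+r$; here $\mathbf{I}^d(\L)$ need not be zero, but Proposition \ref{prop:vanishingII} (together with the trivial vanishing of Nisnevich cohomology in degrees exceeding $\dim X$) covers exactly the two remaining nontrivial degrees $i = d-1$ and $i = d$, using the hypothesis $d \geq 2$ to handle the degree $d-1$ vanishing.

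Combining these observations in the long exact sequence yields the desired isomorphism in the stated range. The only mildly subtle point—and the step I would be most careful about—is the boundary case $j = d-1$, $i = d-1$: this is precisely where the stronger Proposition \ref{prop:vanishingII} (rather than the coarser Proposition \ref{prop:vanishingI}) is needed, and also where the hypothesis $d \geq 2$ is essential. Once that case is accounted for, the remaining cases are immediate from dimension considerations or the vanishing of the sheaf itself, and no further input (e.g.\ from the structure of the twist) is required.
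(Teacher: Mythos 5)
Your proposal is correct and follows the paper's argument exactly: take the long exact sequence in Nisnevich cohomology associated with $0 \to \mathbf{I}^{j+1}(\L) \to \K^{MW}_j(\L) \to \K^M_j \to 0$ and invoke the vanishing results with $r = \mathrm{cd}_2(k) = 0$. The only cosmetic difference is that you split off the case $j \geq d$ via Proposition \ref{prop:vanishingI} (sheaf already zero), whereas the paper cites Proposition \ref{prop:vanishingII} alone, which already covers all degrees $i \geq d-1$ uniformly once one remembers that Nisnevich cohomological dimension is at most $d$.
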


\begin{proof}
Use the long exact sequence in cohomology associated with the short exact sequence of sheaves
\[
0 \longrightarrow {\mathbf I}^{j+1}(\L) \longrightarrow \K^{\MW}_j(\L) \longrightarrow \K^M_j \longrightarrow 0
\]
and Proposition \ref{prop:vanishingII}.
\end{proof}

\subsubsection*{A vanishing result for cohomology of $\K^M_n/m$}
\begin{prop}
\label{prop:torsionmilnorKtheoryvanishing}
Let $X$ be a smooth variety of dimension $d$ over a field $k$. If there exists an integer $m > 0$ such that for any closed point $x\in X$ the group $k(x)^\times$ is $m$-divisible, then $H^{d}_{\Nis}(X,\K^M_{d+1}/m) = 0$.
\end{prop}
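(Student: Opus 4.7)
The plan is to use the Rost--Schmid (equivalently, Gersten) resolution of $\K^M_{d+1}/m$ to reduce the statement to a pointwise condition on residue fields at closed points of $X$. Since $\K^M_{d+1}$ is strictly $\aone$-invariant, so is the quotient $\K^M_{d+1}/m$, and thus the Rost--Schmid complex provides a flasque resolution whose cohomology computes Nisnevich cohomology (this is recalled in Section~\ref{section:aonehomotopystablerange}).

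By the description of the terms of the Rost--Schmid complex via contractions (\cite[Definition 5.7]{MField}), together with the formula $(\K^M_{d+1})_{-n} \cong \K^M_{d+1-n}$, the top-degree term of the complex is
\[
\bigoplus_{x \in X^{(d)}} \K^M_{1}(k(x))/m \;=\; \bigoplus_{x \in X^{(d)}} k(x)^\times/m.
\]
Since by hypothesis $k(x)^\times$ is $m$-divisible for every closed point $x$, each summand vanishes, so this top-degree term is zero. Consequently, the top cohomology $H^d_{\Nis}(X,\K^M_{d+1}/m)$, which is the cokernel of the differential into this group, is automatically zero.

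The only small subtlety to check is that the Rost--Schmid complex is compatible with reduction mod $m$, i.e.\ that the complex computing $H^*_{\Nis}(X,\K^M_{d+1}/m)$ really has the form indicated above. This follows either directly from the construction of the Rost--Schmid complex with coefficients in the quotient sheaf $\K^M_{d+1}/m$, or by using the short exact sequence $0 \to \K^M_{d+1} \xrightarrow{m} \K^M_{d+1} \to \K^M_{d+1}/m \to 0$ and the fact that, by the same contraction argument, the term $\bigoplus_{x \in X^{(d)}} k(x)^\times$ in the Rost--Schmid complex for $\K^M_{d+1}$ is $m$-divisible under the hypothesis (so that multiplication by $m$ is surjective on $H^d_{\Nis}(X, \K^M_{d+1})$, giving $H^d_{\Nis}(X,\K^M_{d+1}/m)=0$ from the resulting long exact sequence). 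There is essentially no obstacle here: once the Rost--Schmid description is in hand, the conclusion is immediate, and the only ``content'' of the proof is the observation that $m$-divisibility at closed points kills the top term.
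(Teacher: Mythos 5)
Your main argument is exactly the paper's proof: use the Gersten/Rost--Schmid resolution of $\K^M_{d+1}/m$, identify the top term via $(\K^M_{d+1}/m)_{-d} \cong \K^M_1/m$, and note that $k(x)^\times/(k(x)^\times)^m = 0$ by hypothesis. Your side remark about the alternative route via the long exact sequence for $0 \to \K^M_{d+1} \xrightarrow{m} \K^M_{d+1} \to \K^M_{d+1}/m \to 0$ is also correct but is just a minor repackaging of the same observation.
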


\begin{proof}
The Gersten resolution for the sheaf $\K^M_{d+1}/m$ gives an exact sequence of the form
\[
\bigoplus_{x \in X^{(d)}}(\K^M_{d+1}/m)_{-d}(k(x)) \longrightarrow H^d_{\Nis}(X,\K^M_{d+1}/m) \longrightarrow 0.
\]
Furthermore $(\K^M_{d+1}/m)_{-d} = \K^M_1/m$, and $\K^M_1/m(k(x)) = k(x)^\times/(k(x)^\times)^m = 0$ by assumption.
\end{proof}

\section{Obstruction theory and classification results for vector bundles}
\label{section:obstructionsandclassification}
In this section, we begin by reviewing aspects of obstruction theory involving the Postnikov tower in $\aone$-homotopy theory.  We combine the results of the previous sections with obstruction theory for the Postnikov tower of $BGL_n$ to obtain information about vector bundles.  The main results of this section are Theorems \ref{thm:classificationrank2algclosed} and Corollary \ref{cor:stableisomorphismimpliesisomorphism}.

\subsubsection*{The Postnikov tower in $\aone$-homotopy theory}
If ${\mathscr G}$ is a (Nisnevich) sheaf of groups, and ${\mathbf A}$ is a (Nisnevich) sheaf of abelian groups on which $\mathscr{G}$ acts, there is an induced action of ${\mathscr G}$ on the Eilenberg-Mac Lane space $K({\mathbf A},n)$ that fixes the base-point.  In that case, we set $K^{\mathscr G}(\mathbf A,n) := E{\mathscr G} \times^{\mathscr G} K({\mathbf A},n)$. The projection onto the first factor defines a morphism $K^{\mathscr G}({\mathbf A},n) \to B{\mathscr G}$ that is split by the inclusion of the base-point.

Just as simplicial homotopy classes of maps $[{\mathscr X},K({\mathbf A},n)]_{s}$ are in bijection with elements of $H^n_{\Nis}({\mathscr X},{\mathbf A})$, there is a corresponding classification theorem in this ``twisted" setting.  A map ${\mathscr X} \to K^{{\mathscr G}}({\mathbf A},n)$ gives, by composition, a morphism $\mathscr{X} \to B{\mathscr G}$, which yields a $\mathscr{G}$-torsor ${\mathscr P} \to \mathscr{X}$ by pullback.  Then, the morphism of the previous sentence can be interpreted as a $\mathscr{G}$-equivariant map ${\mathscr P} \to K({\mathbf A},n)$, i.e., a $\mathscr{G}$-equivariant degree $n$ cohomology class on ${\mathscr X}$ with coefficients in $\mathbf{A}$.  The following result summarizes the form of the Postnikov tower we will use; this result is collated from a collection of sources including \cite[Chapter VI.5]{GoerssJardine}, \cite{MV} and \cite[Appendix B]{MField}.

\begin{thm}
\label{thm:postnikovtower}
If $({\mathscr Y},y)$ is any pointed $\aone$-connected space, then there are a sequence of pointed $\aone$-connected spaces $({\mathscr Y}^{(i)},y)$, morphisms $p_i: {\mathscr Y} \to {\mathscr Y}^{(i)}$, and morphisms $f_i: {\mathscr Y}^{(i+1)} \to {\mathscr Y}^{(i)}$ such that
\begin{itemize}
\item[i)] ${\mathscr Y}^{(i)}$ has the property that $\bpi_j^{\aone}({\mathscr Y}^{(i)}) = 0 $ for $j > i$,
\item[ii)] the morphism $p_i$ induces an isomorphism on $\aone$-homotopy sheaves in degree $\leq i$,
\item[iii)] the morphism $f_i$ is an $\aone$-fibration, and the $\aone$-homotopy fiber of $f_i$ is a $K(\bpi_{i+1}^{\aone}({\mathscr Y}),i+1)$,
\item[iv)] the induced morphism ${\mathscr Y} \to \operatorname{holim}_i {\mathscr Y}^{(i)}$ is an $\aone$-weak equivalence.
\end{itemize}
Furthermore, $f_i$ is a {\em twisted $\aone$-principal fibration}, i.e., there is a unique (up to $\aone$-homotopy) morphism
\[
k_{i+1}: \mathscr{Y}^{(i)} \longrightarrow K^{\bpi_1^{\aone}(\mathscr{Y})}(\bpi_{i+1}^{\aone}(\mathscr{Y}),i+2)
\]
called a $k$-invariant sitting in an $\aone$-homotopy pullback square of the form
\[
\xymatrix{
\mathscr{Y}^{(i+1)} \ar[r]\ar[d] & B\bpi_1^{\aone}(\mathscr{Y}) \ar[d] \\
\mathscr{Y}^{(i)} \ar[r]^-{k_{i+1}} & K^{\bpi_1^{\aone}(\mathscr{Y})}(\bpi_{i+1}^{\aone}(\mathscr{Y}),i+2),
}
\]
where the action of $\bpi_1^{\aone}({\mathscr Y})$ on the higher $\aone$-homotopy sheaves is the usual conjugation action induced by change of base-points.
\end{thm}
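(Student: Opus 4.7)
The plan is to construct the tower first in the pointed simplicial Nisnevich homotopy category $\hspnis$ and then to argue that, under the $\aone$-invariance properties of the homotopy sheaves, it descends to the $\aone$-local setting. Throughout, replace $\mathscr{Y}$ by a simplicially fibrant, $\aone$-local model; for such a space the simplicial sheaves of homotopy groups coincide with the $\aone$-homotopy sheaves $\bpi_j^{\aone}(\mathscr{Y})$.

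First, I would apply the classical simplicial Postnikov tower construction of \cite[Chapter VI.5]{GoerssJardine} (which goes through sectionwise, or equivalently via coskeleton functors on simplicial Nisnevich sheaves). This produces simplicially fibrant sheaves $\mathscr{Y}^{(i)}$ equipped with morphisms $p_i : \mathscr{Y} \to \mathscr{Y}^{(i)}$ and fibrations $f_i : \mathscr{Y}^{(i+1)} \to \mathscr{Y}^{(i)}$ satisfying the simplicial analogues of (i)--(iv): the simplicial homotopy sheaves of $\mathscr{Y}^{(i)}$ agree with those of $\mathscr{Y}$ in degree $\leq i$ and vanish above, the morphism $p_i$ induces the expected isomorphisms, the simplicial homotopy fiber of $f_i$ is an Eilenberg--Mac Lane sheaf $K(\bpi_{i+1}^{\aone}(\mathscr{Y}), i+1)$, and the induced map into the homotopy inverse limit is a simplicial weak equivalence.

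Second, the key step is to verify that each $\mathscr{Y}^{(i)}$ is $\aone$-local, so that its simplicial homotopy sheaves are its $\aone$-homotopy sheaves and the tower can be interpreted in $\ho{k}$. Because $\mathscr{Y}$ is $\aone$-connected, the sheaf $\bpi_1^{\aone}(\mathscr{Y})$ is strongly $\aone$-invariant and the higher sheaves $\bpi_j^{\aone}(\mathscr{Y})$ are strictly $\aone$-invariant by \cite{MField}. For such sheaves, Morel shows (cf.\ \cite[Appendix B]{MField}) that the ordinary and twisted Eilenberg--Mac Lane spaces $K(\mathbf{A}, n)$ and $K^{\bpi_1^{\aone}(\mathscr{Y})}(\mathbf{A}, n)$ are $\aone$-local. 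Since $\aone$-local objects are closed under homotopy pullback along fibrations, induction on $i$ using the fiber sequence $K(\bpi_{i+1}^{\aone}(\mathscr{Y}), i+1) \to \mathscr{Y}^{(i+1)} \to \mathscr{Y}^{(i)}$ shows that every $\mathscr{Y}^{(i)}$ is $\aone$-local. Properties (i)--(iv) then transfer verbatim.

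Third, to realize $f_i$ as a twisted $\aone$-principal fibration, I would use the classification of principal $K(\mathbf{A}, n+1)$-fibrations over a space equipped with a reference map to $B\bpi_1^{\aone}(\mathscr{Y})$: such fibrations are classified, up to fiber-homotopy equivalence, by homotopy classes of maps into $K^{\bpi_1^{\aone}(\mathscr{Y})}(\mathbf{A}, n+2)$. Applying this to $f_i$ (whose fiber is $K(\bpi_{i+1}^{\aone}(\mathscr{Y}), i+1)$ and whose monodromy is the conjugation action) yields the $k$-invariant $k_{i+1}$ and the asserted homotopy pullback square; again, the $\aone$-locality of the twisted Eilenberg--Mac Lane target ensures that the simplicial classification descends to $\ho{k}$. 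The main obstacle is precisely the $\aone$-locality step in paragraph two: unlike in topology, simplicial Postnikov sections of an $\aone$-local space are not automatically $\aone$-local, and establishing that they are relies in an essential way on the strict $\aone$-invariance of the higher homotopy sheaves proved in \cite{MField}.
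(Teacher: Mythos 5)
The paper provides no proof of this theorem; it simply states that the result is collated from Goerss--Jardine, Morel--Voevodsky, and Morel's Appendix B, and your proposal is a reasonable reconstruction of how those references combine. You correctly isolate the central point: the simplicial Postnikov sections of an $\aone$-local space are themselves $\aone$-local because $\bpi_1^{\aone}(\mathscr{Y})$ is strongly $\aone$-invariant, the higher $\bpi_n^{\aone}(\mathscr{Y})$ are strictly $\aone$-invariant, and the corresponding (twisted) Eilenberg--Mac Lane spaces are $\aone$-local.

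There is, however, one step in your inductive argument that does not work as written. You assert that ``$\aone$-local objects are closed under homotopy pullback along fibrations'' and apply this to the fiber sequence $K(\bpi_{i+1}^{\aone}(\mathscr{Y}),i+1) \to \mathscr{Y}^{(i+1)} \to \mathscr{Y}^{(i)}$, but the total space of a fiber sequence is not a homotopy pullback of its fiber and base, so the cited closure property does not directly yield $\aone$-locality of $\mathscr{Y}^{(i+1)}$. Two repairs are available. Either (a) first establish simplicially (via \cite[Chapter VI.5]{GoerssJardine}) that $\mathscr{Y}^{(i+1)}$ sits in a homotopy pullback square over $K^{\bpi_1^{\aone}(\mathscr{Y})}(\bpi_{i+1}^{\aone}(\mathscr{Y}),i+2)$ with the other two corners $\mathscr{Y}^{(i)}$ and $B\bpi_1^{\aone}(\mathscr{Y})$; all three corners are $\aone$-local (the twisted Eilenberg--Mac Lane space by strict $\aone$-invariance of the coefficient sheaf, and $B\bpi_1^{\aone}(\mathscr{Y})$ by strong $\aone$-invariance), and then closure of $\aone$-local objects under homotopy limits applies --- but note this forces you to run your third paragraph \emph{before} your second, not after. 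Or (b) invoke directly the lemma, present in Morel's development, that a simplicial fibration between simplicially fibrant sheaves with $\aone$-local base and $\aone$-local fibers has $\aone$-local total space; this preserves the order of your paragraphs. Either fix closes the gap, but the argument as written conflates the two and is circular as ordered.
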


Let $(\mathscr{X},x)$ be a pointed space.  With notation as in Theorem \ref{thm:postnikovtower}, if $g^{(i)}: \mathscr{X} \to \mathscr{Y}^{(i)}$ is a map, then $g^{(i)}$ lifts to a map $g^{(i+1)}:\mathscr{X} \to \mathscr{Y}^{(i+1)}$ if and only if the composite
\[
\mathscr{X} \stackrel{g^{(i)}}{\longrightarrow} \mathscr Y^{(i)} \longrightarrow K^{\bpi_1^{\aone}({\mathscr Y})}(\bpi_{i+1}^{\aone}({\mathscr Y}),i+2)
\]
lifts to a map $\mathscr{X}\to B\bpi_1^{\aone}(\mathscr{Y})$.

Since each stage of the Postnikov tower is a twisted $\aone$-principal fibration with fiber an Eilenberg-Mac Lane space, the set of lifts of a given $g^{(i)}$ admits a cohomological description.  The space $\mathscr{Y}^{(0)}$ is $\aone$-contractible by assumption, so any pointed map $\mathscr{X} \to \mathscr{Y}^{(0)}$ is $\aone$-homotopy equivalent to the constant map $\mathscr{X} \to y \hookrightarrow \mathscr{Y}^{(0)}$.  There is no obstruction to lifting the constant map to the first stage of the Postnikov tower to obtain a map $\xi: \mathscr{X} \to \mathscr{Y}^{(1)} = B\bpi_1^{\aone}(\mathscr{Y})$, and such a map yields a $\bpi_1^{\aone}({\mathscr Y})$-torsor $\mathscr{P}$ on $\mathscr{X}$.  We will describe lifts to higher stages of the Postnikov tower with $\xi$ fixed.

Given a map $g^{(i)} : \mathscr{X} \to \mathscr{Y}^{(i)}$ for which the obstruction to lifting vanishes, using the homotopy cartesian square that appears in Theorem \ref{thm:postnikovtower}, we can provide a description of the set of lifts of $g^{(i)}$.  Indeed, Theorem \ref{thm:postnikovtower} guarantees that $\mathscr{Y}^{(i+1)} \to \mathscr{Y}^{(i)}$ is an $\aone$-fibration so any lift of $g^{(i)}$ can, after choosing appropriate fibrant models of the $\mathscr{Y}^{(i)}$, be assumed to be given by an actual morphism $\mathscr{X} \to \mathscr{Y}^{(i+1)}$.  Specifying a lift of the morphism $g^{(i)}$ is then equivalent to specifying a section of the map
\[
\mathscr{X} \times_{g^{(i)},\mathscr{Y}^{(i)},f_i} \mathscr{Y}^{(i+1)} \longrightarrow \mathscr{X}.
\]
Since $\mathscr{Y}^{(i+1)}$ is the homotopy fiber product of $\mathscr{Y}^{(i)}$ and $B\bpi_1^{\aone}(\mathscr{Y})$ over $K^{\bpi_1^{\aone}({\mathscr Y})}(\bpi_{i+1}^{\aone}({\mathscr Y}),i+2)$, there is an $\aone$-weak equivalence
\[
\mathscr{X} \times_{g^{(i)},\mathscr{Y}^{(i)},f_i} \mathscr{Y}^{(i+1)}  \cong \mathscr{X} \times_{K^{\bpi_1^{\aone}({\mathscr Y})}(\bpi_{i+1}^{\aone}({\mathscr Y}),i+2)} B\bpi_1^{\aone}(\mathscr{Y})
\]
(on the right, we mean homotopy fiber product). 

Now, existence of a lift of $g^{(i)}$ is, as observed above, equivalent to the composite map $\mathscr{X} \to \mathscr{Y}^{(i)} \to K^{\bpi_1^{\aone}({\mathscr Y})}(\bpi_{i+1}^{\aone}({\mathscr Y}),i+2)$ factoring through $B\bpi_1^{\aone}(\mathscr{Y})$.  In fact, using the choices of the previous paragraph, the map $\mathscr{X} \to K^{\bpi_1^{\aone}({\mathscr Y})}(\bpi_{i+1}^{\aone}({\mathscr Y}),i+2)$ appearing in the fiber product above factors through the map $\xi: \mathscr{X} \to B\bpi_1^{\aone}(\mathscr{Y})$ and there is an induced $\aone$-weak equivalence
\[
\mathscr{X} \times_{K^{\bpi_1^{\aone}({\mathscr Y})}(\bpi_{i+1}^{\aone}({\mathscr Y}),i+2)} B\bpi_1^{\aone}(\mathscr{Y}) \cong \mathscr{X} \times_{\xi,B\bpi_1^{\aone}(\mathscr{Y})} (B\bpi_1^{\aone}(\mathscr{Y}) \times_{K^{\bpi_1^{\aone}({\mathscr Y})}(\bpi_{i+1}^{\aone}({\mathscr Y}),i+2)} B\bpi_1^{\aone}(\mathscr{Y})), 
\]
where, again, both sides are homotopy fiber products.  The homotopy fiber product 
\[
B\bpi_1^{\aone}(\mathscr{Y}) \times_{K^{\bpi_1^{\aone}({\mathscr Y})}(\bpi_{i+1}^{\aone}({\mathscr Y}),i+2)} B\bpi_1^{\aone}(\mathscr{Y})
\]
is precisely the space of fiberwise (simplicial) loops in $K^{\bpi_1^{\aone}({\mathscr Y})}(\bpi_{i+1}^{\aone}({\mathscr Y}),i+2)$, which is identified with $K^{\bpi_1^{\aone}({\mathscr Y})}(\bpi_{i+1}^{\aone}({\mathscr Y}),i+1)$ (see, e.g., \cite[\S 2.4-5]{Robinson} for a related discussion).  Combining these equivalences, we see that the space of lifts of a given $g^{(i)}$ is a quotient of the space of $\aone$-homotopy classes of sections of the projection map
\[
\mathscr{X} \times_{\xi,B\bpi_1^{\aone}(\mathscr{Y})} K^{\bpi_1^{\aone}({\mathscr Y})}(\bpi_{i+1}^{\aone}({\mathscr Y}),i+1) \longrightarrow \mathscr{X},
\]
and this space of sections is precisely $H^{i+1}(\mathscr{X},\bpi_{i+1}^{\aone}(\mathscr{Y})(\xi))$.

\begin{prop}
\label{prop:passingtofinitecoskeleta}
If $X$ is a smooth algebraic variety over a field $k$ of dimension $\leq d$, $(\mathcal{Y},y)$ is any pointed $\aone$-connected space and $i\geq 1$, then the morphism $\mathscr{Y} \to \mathscr{Y}^{(i)}$ induces a map
\[
[X,\mathscr{Y}]_{\aone} \longrightarrow [X,\mathscr{Y}^{(i)}]_{\aone}
\]
that is surjective if $i \geq d-1$, and bijective if $i \geq d$. If moreover $H^d_{\Nis}(X,\bpi_d^{\aone}(\mathscr{Y})(\xi)) = 0$ for any $\bpi_1^{\aone}(\mathscr{Y})$-torsor $\xi$ obtained as the composite $[X,\mathscr{Y}^{(i)}]_{\aone}\to [X,\mathscr{Y}^{(1)}]_{\aone}=[X,B\bpi_1^{\aone}(\mathscr{Y})]_{\aone}$, then the above map is bijective also for $i = d-1$.
\end{prop}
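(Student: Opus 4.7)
The strategy is to apply the obstruction-theoretic description of the Postnikov tower recalled in the paragraphs preceding the statement stage by stage, exploiting the fact that Nisnevich cohomology of a smooth $k$-variety $X$ of dimension $\leq d$ vanishes in degrees $> d$. Since Theorem \ref{thm:postnikovtower}(iv) gives an $\aone$-weak equivalence $\mathscr{Y} \isomto \operatorname{holim}_j \mathscr{Y}^{(j)}$, once I show that the transition maps $[X,\mathscr{Y}^{(j+1)}]_{\aone} \to [X,\mathscr{Y}^{(j)}]_{\aone}$ are eventually bijective there is no $\lim^1$ contribution to worry about, and $[X,\mathscr{Y}]_{\aone}$ computes as an inverse limit that stabilizes at any $j \geq d$.

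For surjectivity when $i \geq d-1$, I would take a map $g^{(i)} : X \to \mathscr{Y}^{(i)}$ and lift it inductively up the tower. By the analysis preceding the proposition, the obstruction to lifting a given $g^{(j)}$ to $g^{(j+1)}$ is a class in $H^{j+2}_{\Nis}(X,\bpi_{j+1}^{\aone}(\mathscr{Y})(\xi))$, where the twist $\xi$ is the $\bpi_1^{\aone}(\mathscr{Y})$-torsor determined by the canonical composite $X \to \mathscr{Y}^{(j)} \to \mathscr{Y}^{(1)} = B\bpi_1^{\aone}(\mathscr{Y})$, which is independent of $j$ by naturality of the tower. For $j \geq i \geq d-1$ we have $j + 2 > d$, so the obstruction group vanishes, and a compatible system of lifts exists; taking the inverse limit produces a preimage in $[X,\mathscr{Y}]_{\aone}$. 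For bijectivity when $i \geq d$, the set of lifts of $g^{(j)}$ to $\mathscr{Y}^{(j+1)}$ is a quotient of $H^{j+1}_{\Nis}(X,\bpi_{j+1}^{\aone}(\mathscr{Y})(\xi))$, and since $j+1 > d$ for $j \geq d$, the lift at every stage is forced to be unique; this gives the injectivity needed to upgrade surjectivity to bijectivity.

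For the refined bijectivity at $i = d-1$, the previous paragraph already shows that all obstruction and ambiguity groups at stages $j \geq d$ vanish by cohomological dimension, so the only potentially nontrivial ambiguity is $H^d_{\Nis}(X,\bpi_d^{\aone}(\mathscr{Y})(\xi))$, controlling the first lift from $\mathscr{Y}^{(d-1)}$ to $\mathscr{Y}^{(d)}$. The added hypothesis asserts exactly that this group vanishes for every torsor $\xi$ arising from a class in $[X,\mathscr{Y}^{(d-1)}]_{\aone}$, so uniqueness of the lift propagates through this first stage as well, and bijectivity extends all the way to $[X,\mathscr{Y}]_{\aone}$.

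The main technical point, and where I would expect to spend the most care, is verifying that the obstruction-theoretic setup interacts well with the passage to the inverse limit: one must check that the torsor $\xi$ appearing in the twisted coefficient sheaves really is determined by the image in $[X,B\bpi_1^{\aone}(\mathscr{Y})]_{\aone}$ and is preserved along lifts up the tower, and one must rule out $\lim^1$-type anomalies by exploiting the stabilization of the tower past stage $d$. Once these bookkeeping details are handled, the chain of vanishing statements above delivers both claims directly.
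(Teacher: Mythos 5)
Your proposal follows essentially the same route as the paper: identify the obstruction to lifting $g^{(j)}$ to $g^{(j+1)}$ as a class in $H^{j+2}_{\Nis}(X,\bpi_{j+1}^{\aone}(\mathscr{Y})(\xi))$, note this vanishes for $j\geq d-1$ by cohomological dimension, observe the set of lifts is a quotient of $H^{j+1}_{\Nis}(X,\bpi_{j+1}^{\aone}(\mathscr{Y})(\xi))$, and let the extra hypothesis kill the only remaining ambiguity group at $j=d-1$. You are somewhat more explicit than the paper about the $\operatorname{holim}$/$\lim^1$ bookkeeping needed to pass from eventual stabilization of $[X,\mathscr{Y}^{(j)}]_{\aone}$ to a statement about $[X,\mathscr{Y}]_{\aone}$, but the underlying argument is the same.
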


\begin{proof}
We use the Postnikov tower, which actually requires us to consider pointed maps.  However, by adjoining a disjoint base-point to $X$ and observing that there is a canonical bijection $[X_+,(\mathscr{Z},z)]_{\aone} \cong [X,\mathscr{Z}]_{\aone}$ for any pointed space $\mathscr{Z}$, we can make statements about free homotopy classes.

Given an element of $[X,\mathscr{Y}^{(i)}]_{\aone}$, the obstruction to lifting it to $[X,\mathscr{Y}^{(i+1)}]$ is given by the composite map
\[
X \longrightarrow \mathscr{Y}^{(i)} \longrightarrow K^{\bpi_1^{\aone}({\mathscr Y})}(\bpi_{i+1}^{\aone}(\mathscr{Y}),i+2).
\]
This composite is an element of
\[
[X,K^{\bpi_1^{\aone}({\mathscr Y})}(\bpi_{i+1}^{\aone}(\mathscr{Y}),i+2)]_{\aone} \cong H^{i+2}_{\bpi_1^{\aone}({\mathscr Y})}(X,\bpi_{i+1}^{\aone}(\mathscr{Y})),
\]
which vanishes for $i \geq d-1$ because the cohomological dimension of $X$ is $\leq d$.  Assuming this vanishing, we know that the set of lifts is a quotient of $H^d_{\Nis}(X,\bpi_d^{\aone}(\mathscr{Y})(\xi))$.  The latter group vanishes if $i \geq d$, and the results follow immediately.
\end{proof}

\subsubsection*{The first two stages of the Postnikov tower of $BGL_2$}
We now apply the discussion above in the case where $\mathscr{Y} = BGL_n$ with its canonical basepoint.  We saw that $\bpi_1^{\aone}(BGL_n) \cong \gm$, independent of $n$.  The space $B\gm$ has exactly one non-zero homotopy sheaf in degree $1$, and this homotopy sheaf is isomorphic to $\gm$.  The determinant map $GL_n \to \gm$ yields a morphism $BGL_n \to B\gm$ that allows us to identify $B\gm$ as the first stage of the $\aone$-Postnikov tower of $BGL_n$ (see, e.g., \cite[Lemma 4.1]{AsokPi1} and the preceding discussion).

To describe the second stage of the $\aone$-Postnikov tower of $BGL_n$, we need to understand the action of $\bpi_1^{\aone}(BGL_n) = \gm$ on $\bpi_2^{\aone}(BGL_n)$.  The homomorphism $SL_n \to GL_n$ induces a map $BSL_n \to BGL_n$.  Since $BSL_n$ is $\aone$-$1$-connected (this follows \cite[Theorem 7.20]{MField}, but see, e.g., \cite[Theorem 3.2]{AsokFaselSpheres} for this statement), it follows that the $\aone$-universal cover of $BGL_n$ has the $\aone$-homotopy type of $BSL_n$.  If $G$ is a sheaf of groups, and we write $EG$ for the Cech simplicial scheme associated with the structure map $G \to \Spec k$, then $EG$ is simplicially contractible.  The inclusion $SL_n \hookrightarrow GL_n$ determines maps $BSL_n = ESL_n/SL_n \to EGL_n/SL_n$, and $EGL_n/SL_n \to EGL_n/GL_n=BGL_n$.  The map $BSL_n \to EGL_n/SL_n$ is a simplicial weak equivalence, while the map $EGL_n/SL_n \to BGL_n$ is a $\gm$-torsor and thus an $\aone$-covering space by means of the identification $GL_n/SL_n \isomt \gm$ given by the determinant and \cite[Lemma 7.5.1]{MField}.  In particular, $EGL_n/SL_n \to BGL_n$ is a model of the $\aone$-universal cover of $BGL_n$ by \cite[Theorem 7.8]{MField}.  It follows that the action of $\gm$ on the higher $\aone$-homotopy sheaves of $BGL_n$ is induced by ``deck transformations" from the standard $\gm$-action on $EGL_n/SL_n$.

There is a conjugation action of $\gm$ on $SL_n$ induced by the splitting $t \mapsto diag(t,1,\ldots,1)$ of the determinant homomorphism.  This action determines $\gm$-actions on $ESL_n$ and $EGL_n$ such that the map $ESL_n \to EGL_n$ is $\gm$-equivariant.  Thus, there is an $\gm$-equivariant map $BSL_n \to EGL_n/SL_n$.  It is straightforward to check that the $\gm$-action on $EGL_n/SL_n$ coming from the identification of $EGL_n/SL_n \to BGL_n$ as a $\gm$-torsor coincides with the action described in the previous paragraph.  In other words, the action of $\gm$ by ``deck transformations" on the higher $\aone$-homotopy sheaves of $BGL_n$ comes from the conjugation action of $\gm$ on $SL_n$.

When $n = 2$, we can identify $SL_2 = Sp_2$.  Combining Theorem \ref{thm:stabilizationspn} and \cite[Theorem 6.40]{MField} we see that $\K^{MW}_2 = \mathbf{GW}^2_2$.  Under these identifications, the conjugation action of $\gm$ on $SL_2$ induces, by Lemmas \ref{lem:multiplicativeaction} and \ref{lem:actionongwij}, the multiplication action of $\K^{MW}_0$ on $\K^{MW}_2$.  Given this description of the action, the next result provides an explicit description of maps to the second stage of the Postnikov tower of $BGL_2$.

\begin{prop}
\label{prop:twocoskeleton}
For any smooth scheme $X$, we have $[X,BGL_2^{(2)}]_{\aone} = [X,K^{\gm}(\K^{MW}_2,2)]_{\aone}$, where the $\gm$ action on $\K^{MW}_2$ is that described above, and an element of $[X,K^{\gm}(\K^{MW}_2,2)]_{\aone}$ corresponds to a pair $(\xi,\alpha)$, where $\xi: X \to B\gm$ corresponds to a line bundle $\L$ on $X$, and $\alpha \in H^2_{\Nis}(X,\K^{MW}_2(\L))$.
\end{prop}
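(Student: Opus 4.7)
The plan is to exhibit an explicit $\aone$-weak equivalence $BGL_2^{(2)} \simeq K^{\gm}(\K^{MW}_2, 2)$ by exploiting the description of $BGL_2$ as a Borel construction recalled in the discussion preceding the statement, and then to read off the classification of homotopy classes of maps from the standard description of twisted Eilenberg-Mac Lane spaces reviewed at the beginning of the present section.

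The first step is to identify $BSL_2^{(2)}$. By Theorem \ref{thm:stabilizationspn} together with the identification $\mathbf{GW}^2_2 \cong \K^{MW}_2$ noted in the paragraphs preceding the statement, one has $\bpi_2^{\aone}(BSL_2) = \bpi_1^{\aone}(SL_2) = \bpi_1^{\aone}(Sp_2) \cong \K^{MW}_2$. Combined with the $\aone$-$1$-connectedness of $BSL_2$, this yields $BSL_2^{(2)} \simeq K(\K^{MW}_2, 2)$.

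The second, main step is to transport this computation through the $\aone$-covering $EGL_2/SL_2 \to BGL_2$, which as recalled just before the proposition realizes $BGL_2$ in $\ho{k}$ as the Borel construction $E\gm \times^{\gm} BSL_2$ (using the simplicial equivalence $BSL_2 \simeq EGL_2/SL_2$). The conjugation $\gm$-action on $BSL_2$ is $\aone$-natural, so it extends to an action on its second Postnikov section, and one obtains an $\aone$-weak equivalence
$$BGL_2^{(2)} \simeq E\gm \times^{\gm} BSL_2^{(2)} \simeq E\gm \times^{\gm} K(\K^{MW}_2, 2) = K^{\gm}(\K^{MW}_2, 2).$$
The $\gm$-action on the coefficient sheaf $\K^{MW}_2$ is precisely the one identified in the discussion preceding the proposition: combining Lemmas \ref{lem:multiplicativeaction} and \ref{lem:actionongwij}, the conjugation action by $\mathrm{diag}(t,1) = \sigma_{2,t}$ on $\bpi_1^{\aone}(SL_2) = \mathbf{GW}^2_2 \cong \K^{MW}_2$ coincides with multiplication by $\langle t \rangle \in \K^{MW}_0$. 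The main delicate point is to justify that Postnikov sectioning commutes with the Borel construction in this $\aone$-setting; this rests on the naturality of the Postnikov tower with respect to the $\gm$-action on $BSL_2$ and can be checked by comparing long exact sequences of $\aone$-homotopy sheaves for the fibrations $BSL_2 \to BGL_2 \to B\gm$ and $K(\K^{MW}_2, 2) \to K^{\gm}(\K^{MW}_2, 2) \to B\gm$ (both sides then have $\aone$-homotopy sheaves $\gm$ in degree $1$, $\K^{MW}_2$ in degree $2$ with the same $\gm$-action, and $0$ above).

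Finally, the classification of elements of $[X, K^{\gm}(\K^{MW}_2, 2)]_{\aone}$ as pairs $(\xi, \alpha)$ is immediate from the general discussion of twisted Eilenberg-Mac Lane spaces at the beginning of Section \ref{section:obstructionsandclassification}: post-composition with the projection $K^{\gm}(\K^{MW}_2, 2) \to B\gm$ yields the $\gm$-torsor $\xi$, equivalently the line bundle $\L$, and for fixed $\xi$ the fiber of this map is in bijection with the twisted cohomology group $H^2_{\Nis}(X, \K^{MW}_2(\L))$, with the twist defined via the $\K^{MW}_0$-action on $\K^{MW}_2$ identified above.
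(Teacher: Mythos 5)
Your proposal inverts the emphasis of the paper's proof: you devote most of your effort to producing the identification $BGL_2^{(2)} \simeq K^{\gm}(\K^{MW}_2,2)$ via the Borel construction, whereas the paper treats this identification as implicit (it follows from Theorem \ref{thm:postnikovtower} together with the preceding discussion of the $\gm$-action and, ultimately, the section $B\gm \to BGL_2$ coming from $t \mapsto \mathrm{diag}(t,1)$ killing the relevant $k$-invariant). Your Borel-construction route is a reasonable alternative; one small caveat is that ``comparing long exact sequences of $\aone$-homotopy sheaves'' by itself does not establish an equivalence --- you must first produce a map, namely the composite $BGL_2 \simeq E\gm \times^{\gm} BSL_2 \to E\gm \times^{\gm} BSL_2^{(2)}$, check that the target is $2$-truncated so this factors through $BGL_2^{(2)}$, and only then invoke the five lemma. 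As you have written it the comparison of homotopy sheaves alone is insufficient.

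The more substantive gap is in your final paragraph, which is precisely where the paper's proof concentrates. You assert that the identification of the fiber of $[X,K^{\gm}(\K^{MW}_2,2)]_{\aone} \to [X,B\gm]_{\aone}$ with $H^2_{\Nis}(X,\K^{MW}_2(\L))$ is ``immediate from the general discussion,'' but that discussion only identifies the set of lifts with an a priori notion of $\bpi_1$-equivariant cohomology (i.e., $\aone$-homotopy classes of $\gm$-equivariant maps $\L^{\circ} \to K(\K^{MW}_2,2)$). The content of the paper's proof is precisely the passage from this equivariant description to the Nisnevich cohomology of the concrete twisted sheaf $\K^{MW}_2(\L)$ defined earlier in terms of tensoring over $\Z[\O(U)^{\times}]$: one must interpret the equivariant map as $\L^{\circ} \to K(\K^{MW}_2,2)\times\L^{\circ}$, descend along the free $\gm$-action, and then invoke \cite[Lemma B.15]{MField} to identify $K(\K^{MW}_2,2)\times^{\gm}\L^{\circ}$ with $K(\K^{MW}_2\times^{\gm}\L^{\circ},2)$, finally recognizing $\K^{MW}_2\times^{\gm}\L^{\circ} = \K^{MW}_2(\L)$ because the $\gm$-action is the multiplicative $\K^{MW}_0$-action. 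Without this chain of identifications your proof does not actually land in the sheaf $\K^{MW}_2(\L)$ as defined in the paper, which is essential for the later applications of the Rost--Schmid complex in Proposition \ref{prop:gw23bound} and Theorem \ref{thm:classificationrank2algclosed}.
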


\begin{proof}
The composite map $X \to K^{\gm}(\K^{MW}_2,2) \to B\gm$ yields a line bundle $\xi: \L \to X$ on $X$.  Write $\L^{\circ}$ for the complement of the zero section of $\L$.  The pullback of the universal $\gm$-torsor along the map classifying $\xi$ is $\L^{\circ}$. As explained above, the map $X \to K^{\gm}(\K^{MW}_2,2)$ can then be identified as a morphism $\L^{\circ} \to K(\K^{MW}_2,2)$ that is $\gm$-equivariant.  Taking the product with the identity, one obtains a $\gm$-equivariant map $\L^{\circ} \to K(\K^{MW}_2,2) \times \L^{\circ}$ that, after forming the quotient by the $\gm$-action, determines a map $X \to K(\K^{MW}_2,2) \times^{\gm} \L^{\circ}$. Now \cite[Lemma B.15]{MField} provides a canonical isomorphism $K(\K^{MW}_2,2) \times^{\gm} \L^{\circ}\cong K(\K^{MW}_2 \times^{\gm} \L^{\circ})$.  As described just prior to the statement of the proposition, the action of $\gm$ on $\K_2^{MW}$ is induced by the multiplication action of $\K^{MW}_0$.  It follows that the sheaf $\K^{MW}_2 \times^{\gm} \L^{\circ}$ is precisely the sheaf $\K^{MW}_2(\L)$ described at the beginning of Section \ref{section:aonehomotopystablerange}.
\end{proof}

\begin{rem}
\label{rem:actiononpi2bgln}
For $n \geq 3$, the induced action of $\gm$ on $\bpi_2^{\aone}(BGL_n) = \K^M_2$ can be seen to be trivial by a more straightforward argument.  In that case, the action of $\gm$ factors through $\hom(\K^M_2,\K^M_2)$, which can be identified with the constant sheaf $\Z$.  Over any algebraically closed extension $L/k$, this action is necessarily trivial because $\gm(L)$ divisible.
\end{rem}

\subsubsection*{Obstruction theory and rank $2$ bundles on a threefold}
From the discussion of the previous section, the $\gm$-action on $\bpi_3^{\aone}(BGL_2)$ is that described in Corollary \ref{cor:gmactionont2nplus2}.  In particular, recall that the $\gm$-action on $\mathbf{T}'_4$ is trivial, while the $\gm$-action on $\mathbf{GW}^2_3$ is the standard action induced by multiplication by $\mathbf{GW}^0_0$.  Repeating the argument of Proposition \ref{prop:twocoskeleton}, we deduce the following result.

\begin{prop}
\label{prop:equivariantsheavescoincide}

If $\L \to X$ is a line bundle whose associated $\gm$-torsor is classified by an element $\xi \in [X,B\gm]_{\aone}$, then there are canonical isomorphisms
\[
\begin{split}
H^3_{\Nis}(X,\mathbf{T}'_4(\xi)) &\cong H^3_{\gm}(\L^{\circ},\mathbf{T}'_4) \cong H^3_{\Nis}(X,\mathbf{T}'_4), \text{ and } \\ H^3_{\Nis}(X,\mathbf{GW}^2_3(\xi)) &\cong H^3_{\gm}(\L^{\circ},\mathbf{GW}^2_3) \cong H^3_{\Nis}(X,\mathbf{GW}^2_3(\L)).
\end{split}
\]
\end{prop}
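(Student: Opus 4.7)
The plan is to mimic the identification used in the proof of Proposition \ref{prop:twocoskeleton}. In both cases, the outer isomorphisms express the cohomology of a twisted sheaf $\mathbf{A}(\xi)$ on $X$ as the $\gm$-equivariant cohomology of $\L^{\circ}$ with coefficients in $\mathbf{A}$, and then evaluate this equivariant cohomology using the specific $\gm$-action on $\mathbf{A}$ that is recorded in Corollary \ref{cor:gmactionont2nplus2}.

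First I would observe that for any strictly $\aone$-invariant sheaf $\mathbf{A}$ equipped with a $\gm$-action (factoring through an action of $\K^{MW}_0$), the classifying space $K^{\gm}(\mathbf{A},3) = E\gm \times^{\gm} K(\mathbf{A},3)$ represents pairs consisting of a class $\xi \in [X,B\gm]_{\aone}$ and a class in $H^3_{\gm}(\L^{\circ},\mathbf{A})$, where $\L^{\circ}$ is the pullback of $E\gm$ along $\xi$. Pulling the map $X \to K^{\gm}(\mathbf{A},3)$ back to $\L^{\circ}$ gives a $\gm$-equivariant map $\L^{\circ} \to K(\mathbf{A},3)$, establishing the bijection with $H^3_{\gm}(\L^{\circ},\mathbf{A})$. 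Passing to the quotient (product with $\L^{\circ}$ then quotient by $\gm$, as in Proposition \ref{prop:twocoskeleton}) and invoking \cite[Lemma B.15]{MField} identifies $K(\mathbf{A},3) \times^{\gm} \L^{\circ}$ with $K(\mathbf{A} \times^{\gm} \L^{\circ},3)$, so the equivariant cohomology group is identified with $H^3_{\Nis}(X,\mathbf{A} \times^{\gm} \L^{\circ})$.

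Now I would read off the two cases separately. For $\mathbf{A} = \mathbf{T}'_4$, Corollary \ref{cor:gmactionont2nplus2} (with $n=1$, which is odd) says the $\gm$-action on $\mathbf{T}'_4$ is trivial, so $\mathbf{T}'_4 \times^{\gm} \L^{\circ} \cong \mathbf{T}'_4$ canonically, and we obtain the untwisted group $H^3_{\Nis}(X,\mathbf{T}'_4)$ as claimed. For $\mathbf{A} = \mathbf{GW}^2_3$, the same corollary together with Lemma \ref{lem:actionongwij} shows that the $\gm$-action is precisely the multiplicative action of $\K^{MW}_0 \cong \mathbf{GW}^0_0$. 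By construction of the twisted sheaf $\mathbf{GW}^2_3(\L)$ recalled at the beginning of Section \ref{section:aonehomotopystablerange} (namely, it is the sheaf associated with $U \mapsto \mathbf{GW}^2_3(U) \otimes_{\Z[\O(U)^{\times}]} \Z[(u^*\L)^{\circ}(U)]$), the contracted product $\mathbf{GW}^2_3 \times^{\gm} \L^{\circ}$ coincides with $\mathbf{GW}^2_3(\L)$, yielding the second isomorphism.

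The main conceptual point, and hence the only real obstacle, is making sure that the $\gm$-action used in forming the twisted sheaf $\mathbf{A}(\xi)$ (coming from the standard $\K^{MW}_0$-action on a contraction) coincides with the $\gm$-action on $\mathbf{A}$ that appears in the Postnikov $k$-invariant, i.e., the action described in Corollary \ref{cor:gmactionont2nplus2}. For $\mathbf{GW}^2_3$ this is exactly the content of Lemma \ref{lem:actionongwij}, and for $\mathbf{T}'_4$ the triviality asserted in Corollary \ref{cor:gmactionont2nplus2} makes the identification automatic; both pieces are already available, so the remaining work is purely bookkeeping within the equivariant Postnikov framework set up earlier in this section.
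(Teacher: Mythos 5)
Your proposal is correct and follows exactly the route the paper takes: the paper's own proof is essentially one sentence instructing the reader to ``repeat the argument of Proposition \ref{prop:twocoskeleton}'' with the $\gm$-action description supplied by Corollary \ref{cor:gmactionont2nplus2}, and your write-up simply makes that repetition explicit, correctly reading off trivial action on $\mathbf{T}'_4$ (since $n=1$ is odd) and the multiplicative $\K^{MW}_0$-action on $\mathbf{GW}^2_3$ via Lemma \ref{lem:actionongwij}.
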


\begin{thm}
\label{thm:classificationrank2algclosed}
If $k$ is an algebraically closed field having characteristic unequal to $2$, and $X$ is a smooth affine $3$-fold, the map sending a vector bundle of rank $2$ to its Chern classes determines a bijection between the pointed set of isomorphism classes of rank $2$ vector bundles on $X$ and $CH^1(X) \times CH^2(X)$.
\end{thm}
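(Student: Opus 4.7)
My strategy is to combine Morel's identification $\mathcal{V}_2(X) = [X,BGL_2]_{\aone}$ with the obstruction theory for the Postnikov tower of $BGL_2$ developed above. With $d = \dim X = 3$, Proposition \ref{prop:passingtofinitecoskeleta} (applied to $\mathscr{Y} = BGL_2$ with $i = d-1 = 2$) gives a surjection $[X,BGL_2]_{\aone} \to [X,BGL_2^{(2)}]_{\aone}$ that is bijective provided $H^3_{\Nis}(X,\bpi_3^{\aone}(BGL_2)(\xi))=0$ for every line bundle $\xi$ arising as the composite $[X,BGL_2^{(2)}]_{\aone}\to [X,B\gm]_{\aone}$. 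The plan is therefore to (i) identify $[X,BGL_2^{(2)}]_{\aone}$ with $CH^1(X)\times CH^2(X)$ via Chern classes, and (ii) establish this vanishing.

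For (i), Proposition \ref{prop:twocoskeleton} identifies $[X,BGL_2^{(2)}]_{\aone}$ with pairs $(\xi,\alpha)$ where $\xi \in \mathrm{Pic}(X) = CH^1(X)$ and $\alpha \in H^2_{\Nis}(X,\K^{MW}_2(\L))$ (with $\L$ the line bundle corresponding to $\xi$). Since $k$ is algebraically closed, hence quadratically closed, Corollary \ref{cor:milnorwittmilnorcohomologycomparison} yields $H^2_{\Nis}(X,\K^{MW}_2(\L)) \cong H^2_{\Nis}(X,\K^M_2) \cong CH^2(X)$. Under these identifications, the first coordinate is the determinant, i.e.\ $c_1$, while the second is $c_2$ by naturality of universal Chern classes.

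For (ii), the $\aone$-universal cover argument gives $\bpi_3^{\aone}(BGL_2) = \bpi_2^{\aone}(SL_2)$, and Theorem \ref{thm:firstnonstablehomotopysheafofsp2n} produces the extension
\[
0 \longrightarrow \mathbf{T}'_4 \longrightarrow \bpi_2^{\aone}(SL_2) \longrightarrow \K^{Sp}_3 \longrightarrow 0,
\]
with $\K^{Sp}_3 = \mathbf{GW}^2_3$. It suffices to show both $H^3_{\Nis}(X,\mathbf{T}'_4(\xi))=0$ and $H^3_{\Nis}(X,\mathbf{GW}^2_3(\xi))=0$. For $\mathbf{T}'_4$, Corollary \ref{cor:gmactionont2nplus2} (with $n=1$, which is odd) says that the $\gm$-action on $\mathbf{T}'_4$ is trivial, so no twist appears. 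Combining the exact sequence $\mathbf{I}^5 \to \mathbf{T}'_4 \to \mathbf{S}'_4 \to 0$ with the long exact sequence in cohomology, it suffices to verify $H^3_{\Nis}(X,\mathbf{I}^5)=0$ and $H^3_{\Nis}(X,\mathbf{S}'_4)=0$. The first is Proposition \ref{prop:vanishingII} with $d=3$, $r=\mathrm{cd}_2(k)=0$, $j=5$. The second follows because $\mathbf{S}'_4$ is a quotient of $\K^M_4/12$ by Theorem \ref{thm:firstnonstablehomotopysheafofsp2n}, and Proposition \ref{prop:torsionmilnorKtheoryvanishing} then gives $H^3_{\Nis}(X,\K^M_4/12)=0$, since closed points of $X$ have residue field $k$ and $k^{\times}$ is divisible.

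For the $\mathbf{GW}^2_3$ factor, Proposition \ref{prop:equivariantsheavescoincide} identifies $H^3_{\Nis}(X,\mathbf{GW}^2_3(\xi))$ with $H^3_{\Nis}(X,\mathbf{GW}^2_3(\L))$, and Theorem \ref{thm:cohomologyofksp3} exhibits this group as a quotient of $Ch^3(X) = CH^3(X)/2$. It remains to show that $Ch^3(X) = 0$ for $X$ smooth affine of dimension $3$ over an algebraically closed field: by the Bloch--Kato isomorphism and the Bloch--Ogus spectral sequence, $Ch^3(X) \cong H^3_{\Nis}(X,\mathcal{H}^3_{\et}(\mu_2^{\otimes 3})) \cong H^6_{\et}(X,\mu_2^{\otimes 3})$, which vanishes by Artin's cohomological dimension theorem (for $X$ smooth affine of dimension $d$ over an algebraically closed field, $H^i_{\et}(X,\F)=0$ for $i>d$ and $\F$ torsion of order prime to the characteristic). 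The main technical obstacle I anticipate is the careful bookkeeping of twists by line bundles throughout the Postnikov analysis, which is precisely what Corollary \ref{cor:gmactionont2nplus2} and Proposition \ref{prop:equivariantsheavescoincide} were developed to control; once those twists are under control, the argument is a concatenation of the preceding results.
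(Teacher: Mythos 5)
Your proof follows essentially the same skeleton as the paper's: identify $[X,BGL_2^{(2)}]_{\aone}$ with $CH^1(X)\times CH^2(X)$ via Propositions \ref{prop:twocoskeleton} and Corollary \ref{cor:milnorwittmilnorcohomologycomparison}, reduce the injectivity question to the vanishing of $H^3_{\Nis}(X,\bpi_3^{\aone}(BGL_2)(\xi))$ via Proposition \ref{prop:passingtofinitecoskeleta}, and split the sheaf $\bpi_2^{\aone}(SL_2)$ into $\mathbf{T}'_4$ and $\mathbf{GW}^2_3$ pieces using Theorem \ref{thm:firstnonstablehomotopysheafofsp2n}. Two points where your route genuinely diverges from the paper's, both minor.

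First, for $H^3_{\Nis}(X,\mathbf{T}'_4)=0$, you invoke the long exact sequence attached to $\mathbf{I}^5 \to \mathbf{T}'_4 \to \mathbf{S}'_4 \to 0$ and appeal to Proposition \ref{prop:vanishingII}. As written, this is slightly imprecise: the displayed sequence does not literally give a long exact cohomology sequence because $\mathbf{I}^5 \to \mathbf{T}'_4$ need not be injective (the genuine short exact sequence is $0\to\mathbf{D}_5\to\mathbf{T}'_4\to\mathbf{S}'_4\to 0$ with $\mathbf{D}_5$ a quotient of $\mathbf{I}^5$). The cleanest repair, which is what the paper does via Proposition \ref{prop:vanishingI}, is to note that for $k$ algebraically closed and $\dim X = 3$ the sheaf $\mathbf{I}^5$ is itself zero on $X$ (since $r+d+1=4\leq 5$), so $\mathbf{T}'_4\cong\mathbf{S}'_4$ outright and one only needs the $\K^M_4/12$ vanishing. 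Your version can also be salvaged by observing that there is always a surjection $H^3(X,\mathbf{I}^5)\twoheadrightarrow H^3(X,\mathbf{D}_5)$ since $H^4$ of a sheaf vanishes on a $3$-fold; but appealing to the sheaf-level vanishing is more direct.

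Second, for $Ch^3(X)=0$, the paper simply cites the fact that $CH^3(X)$ is (uniquely) divisible for $X$ smooth affine of dimension $3$ over $\overline{k}$ \cite{SrinivasRoitman}. You instead run Bloch--Kato plus the Bloch--Ogus spectral sequence to identify $Ch^3(X)\cong H^6_{\et}(X,\mu_2^{\otimes 3})$ and invoke Artin's affine cohomological dimension bound. Both arguments are correct; yours has the merit of reusing exactly the machinery already deployed in the proof of Proposition \ref{prop:vanishingII} rather than importing a separate Roitman-type divisibility result, while the paper's is more elementary to state. The reference to Theorem \ref{thm:cohomologyofksp3} in place of Proposition \ref{prop:gw23bound} is immaterial, since you only use the coarse statement that $H^3_{\Nis}(X,\mathbf{GW}^2_3(\L))$ is a quotient of $Ch^3(X)$, which is what the proposition records.
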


\begin{proof}
By Proposition \ref{prop:twocoskeleton}, we know that $[X,BGL_2^{(2)}]$ consists of pairs $(\L,\alpha)$ consisting of a line bundle $\L \in Pic(X)$ and an element $\alpha \in H^2_{\Nis}(X,\K^{MW}_2(\L))$.
By Corollary \ref{cor:milnorwittmilnorcohomologycomparison}, the canonical morphism
\[
H^2_{\Nis}(X,\K^{MW}_2(\L)) \longrightarrow H^2_{\Nis}(X,\K^M_2)
\]
is an isomorphism.

The map $[X,BGL_2]_{\aone} \to [X,B\gm]_{\aone}$ induced by the first stage of the Postnikov tower for $BGL_2$ sends the class of a vector bundle to its determinant, i.e., its first Chern class.  To identify the class $\alpha$, observe that, by \cite[Remark 7.22]{MField}, the composite map $BGL_2 \to K^{\gm}(\K^{MW}_2,2) \to K(\K^M_2,2)$ is given by the composition $BGL_2 \to BGL_{\infty} \stackrel{c_2}{\to}K(\K^M_2,2)$, where $c_2$ is the second Chern class, considered as a morphism in $\hop{k}$.  Therefore, the elements described at the end of the previous paragraph are precisely the first and second Chern classes.

Applying Proposition \ref{prop:passingtofinitecoskeleta} to $\mathscr{Y} = BGL_2$, we see that the map
\[
[X,BGL_2]_{\aone} \longrightarrow [X,BGL_2^{(2)}]_{\aone}
\]
is surjective.  To conclude the proof, it suffices to establish that the stronger hypothesis of Proposition \ref{prop:passingtofinitecoskeleta} is satisfied, i.e., that $H^3_{\Nis}(X,\bpi_3^{\aone}(BGL_2)(\xi)) = 0$ for $\xi$ the $\gm$-torsor corresponding to the line bundle $\L$ above.

Since $\bpi_3^{\aone}(BGL_2) \cong \bpi_2^{\aone}(SL_2)$ was computed in Theorem \ref{thm:firstnonstablehomotopysheafofsp2n}, we know that $\bpi_2^{\aone}(SL_2)$ is an extension of $\mathbf{GW}_3^2$ by $\mathbf{T}'_4$ which is itself an extension of $\mathbf{S}'_4$ by a quotient of $\mathbf{I}^5$.  Furthermore, $\mathbf{S}'_4$ admits an epimorphism from $\K^M_4/12$.  From this description and Proposition \ref{prop:equivariantsheavescoincide}, the group $H^3_{\Nis}(X,\bpi_3^{\aone}(BGL_2)(\xi))$ fits into an exact sequence of the form
\[
H^3_{\Nis}(X,\mathbf{T}'_4) \longrightarrow H^3_{\Nis}(X,\bpi_3^{\aone}(BGL_2)(\xi)) \longrightarrow H^3_{\Nis}(X,\mathbf{GW}^2_3(\L)).
\]
It suffices then to prove that the groups $H^3_{\Nis}(X,{\mathbf T}^\prime_4)$ and $H^3_{\Nis}(X,\mathbf{GW}_3^2(\L))$ vanish for any $\L \in Pic(X)$.

We now use the assumption that $k$ is algebraically closed.  By Proposition \ref{prop:vanishingI}, we know that the sheaf $\mathbf{I}^5$ restricted to $X$ is just the trivial sheaf so the sheaves $\mathbf{T}'_4$ and $\mathbf{S}'_4$ restricted to $X$ are isomorphic.  For reasons of cohomological dimension, the epimorphism $\K^M_4/12 \to \mathbf{S}'_4$ induces a surjective map $H^3_{\Nis}(X,\K^M_4/12) \to H^3_{\Nis}(X,{\mathbf S}^\prime_4)$, and $H^3_{\Nis}(X,\K^M_4/12)$ vanishes by Proposition \ref{prop:torsionmilnorKtheoryvanishing}. Thus, $H^3_{\Nis}(X,{\mathbf T}^\prime_4)=0$. Now, by Proposition \ref{prop:gw23bound}, $H^3_{\Nis}(X,\mathbf{GW}_3^2(\L))$ is a quotient of $Ch^3(X)$.  Because $X$ is affine of dimension $3$, $CH^3(X)$ is divisible, and thus $Ch^3(X)$ is trivial (in fact, $CH^3(X)$ is uniquely divisible by \cite{SrinivasRoitman}).
\end{proof}

\begin{rem}
With more work, it is possible to reprove the result \cite[Theorem 2.1(iii)]{KumarMurthy} of Mohan Kumar and Murthy for fields having characteristic unequal to $2$.  More precisely, given a smooth affine threefold over an algebraically closed field having characteristic unequal $2$, and given arbitrary classes $(c_1,c_2,c_3) \in \prod_{i=1}^3 CH^i(X)$ there exists a unique rank $3$ vector bundle on $X$ with these Chern classes.  One uses obstruction theory in the same way as above.  In this case, the stronger hypothesis of Proposition \ref{prop:passingtofinitecoskeleta} for $\mathscr{Y} = BGL_3$ is not satisfied for $d = 3$.  Nevertheless, it is still possible to prove that lifts from the second stage of the Postnikov tower to the third stage of the Postnikov tower are parameterized as a set by $CH^3(X)$; this involves showing that the action of $\bpi_1^{\aone}(BGL_3)$ on $\bpi_i^{\aone}(BGL_3)$ is trivial for $i = 2,3$ (the case $i = 2$ is Remark \ref{rem:actiononpi2bgln}).
\end{rem}

Since stably isomorphic vector bundles have equal Chern classes, the following statement, which is a strengthening of \cite[Theorem 5.4]{Fasel3fold}, is an immediate consequence of Theorem \ref{thm:classificationrank2algclosed}.

\begin{cor}
\label{cor:stableisomorphismimpliesisomorphism}
If $X$ is a smooth affine $3$-fold over an algebraically closed field $k$ having characteristic unequal to $2$, and if $E$ and $E^\prime$ are a pair of stably isomorphic rank $2$ vector bundles over $X$, then $E$ and $E^\prime$ are isomorphic.
\end{cor}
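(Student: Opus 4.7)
The plan is to read the corollary as a direct application of Theorem \ref{thm:classificationrank2algclosed}. That theorem says the total Chern class map $\mathcal{V}_2(X) \to Pic(X) \times CH^2(X)$ is a bijection, so it suffices to check that stably isomorphic rank $2$ bundles have equal first and second Chern classes.

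First I would recall the Whitney sum formula for the total Chern class in the Chow ring: $c(E \oplus F) = c(E) \cdot c(F)$, which implies the identity $c_i(E \oplus F) = \sum_{j+k=i} c_j(E) c_k(F)$ in $CH^*(X)$. Assuming $E \oplus F \cong E' \oplus F$ for some vector bundle $F$ on $X$, one obtains $c(E) \cdot c(F) = c(E') \cdot c(F)$ in $CH^*(X)$. Since the Chow ring of a smooth scheme is commutative and $c(F) = 1 + c_1(F) + \cdots$ is a unit (its constant term is $1$), we may cancel $c(F)$ to conclude $c(E) = c(E')$; in particular $c_1(E) = c_1(E')$ in $Pic(X) = CH^1(X)$ and $c_2(E) = c_2(E')$ in $CH^2(X)$.

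Finally, I would invoke Theorem \ref{thm:classificationrank2algclosed}: under the stated hypotheses on $X$ and $k$, the assignment $\mathcal{E} \mapsto (c_1(\mathcal{E}), c_2(\mathcal{E}))$ is a bijection $\mathcal{V}_2(X) \isomto Pic(X) \times CH^2(X)$, so the equality of Chern classes established above forces $E \cong E'$. There is no real obstacle here since the classification theorem does all the work; the only thing to keep in mind is that the stable equivalence is formulated with cancellation against an arbitrary vector bundle $F$ rather than a trivial one, but the Whitney formula together with invertibility of $c(F)$ in $CH^*(X)$ handles that case without change.
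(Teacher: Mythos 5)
Your proposal is correct and takes essentially the same approach as the paper, which simply remarks that stably isomorphic vector bundles have equal Chern classes and then invokes Theorem \ref{thm:classificationrank2algclosed}. You spell out the Whitney-formula argument (and in fact handle the slightly stronger cancellation statement against an arbitrary $F$, using that $c(F)$ is a unit in $CH^*(X)$ since the augmentation ideal is nilpotent on a finite-dimensional smooth scheme), but the structure of the argument is identical.
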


\section{Complex realization and $\aone$-homotopy sheaves}
\label{section:complexrealization}
Recall that if $k$ is a field that admits a complex embedding, then the assignment $X \mapsto X(\cplx)$ sending a smooth $k$-scheme $X$ to $X(\cplx)$ equipped with its usual structure of a complex manifold can be extended to a functor $\hop{k} \to \mathscr{H}_{\bullet}$, where $\mathscr{H}_{\bullet}$ is the usual homotopy category of topological spaces \cite[\S 4]{MV}.  In particular, this functor induces homomorphisms
\[
\bpi_{i,j}^{\aone}(\mathcal{X},x)(\cplx) \longrightarrow \pi_{i+j}(\mathcal{X}(\cplx),x).
\]
By a result of Morel \cite[Theorem 6.13]{MField} (see \cite[Theorem 2.2]{AsokFaselSpheres} for a statement in the form we use), the sheaf appearing on the left hand side can be computed in terms of contractions of $\bpi_i^{\aone}(\mathcal{X},x)$.  We refer the reader to \cite[Lemma 2.7 and Proposition 2.9]{AsokFaselSpheres} for a convenient summary of other facts about contractions we will use here.

The goal of this section is to study this homomorphism for $\mathcal{X} = Sp_{2n}$; the main results are Theorems \ref{thm:complexrealizationgl2case} and \ref{thm:complexrealizationsp2n}.  Along the way, we prove Lemmas \ref{lem:contractionsofsprimeneven} and \ref{lem:contractionsofsprimenodd}, which, in particular, establish non-triviality of the sheaf $\mathbf{T}'_{2n}$ that appears in Theorem \ref{thm:firstnonstablehomotopysheafofsp2n}.  The results of this section are independent of those of Section \ref{section:obstructionsandclassification}.

\subsubsection*{Further computations of contracted homotopy sheaves}
More precise statements regarding the structure of the sheaf ${\mathbf T}'_{2n+2}$ of Theorem \ref{thm:firstnonstablehomotopysheafofsp2n} can be made after repeated contraction.  The following results show that the structure of the sheaf ${\mathbf T}'_{2n+2}$ depends on the parity of $n$.  We refer the reader to \cite[Lemma 2.7 and Proposition 2.9]{AsokFaselSpheres} for a convenient summary of other facts about contractions we will use here.

\begin{lem}
\label{lem:contractionsofsprimeneven}
If $n$ is an even natural number, then the morphism of sheaves
\[
\K^M_{2n+2}/(2(2n+1)!) \longrightarrow {\mathbf S}'_{2n+2}
\]
induces an isomorphism $\K^M_{2}/(2n+1)! \to (\mathbf{S}'_{2n+2})_{-2n}$.
\end{lem}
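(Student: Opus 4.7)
The plan is to contract the defining diagram of $\mathbf{S}'_{2n+2}$ a total of $2n$ times and exploit the $4$-periodicity of the Grothendieck--Witt sheaves $\mathbf{GW}^j_i$ in the upper index, together with Matsumoto's theorem for $\K^M_2$.

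Since $(-)_{-1}$ is exact on strictly $\aone$-invariant sheaves, $(\mathbf{S}'_{2n+2})_{-2n}$ is the cokernel of $(\varphi'_{2n+2})_{-2n}$. Recall from the proof of Proposition \ref{prop:commutativeepimorphisms} that $\varphi'_{2n+2}=\psi_{2n+2}\circ f_{2n+2,2}$. By Remark \ref{rem:karoubiperiodicitycontractions} applied $2n$ times, $(f_{2n+2,2})_{-2n}=f_{2,\,2-2n}$; since $n$ is even, $2-2n\equiv 2\pmod 4$, so $4$-periodicity of $\mathbf{GW}^{\bullet}$ identifies this with $f_{2,2}\colon\K^{Sp}_2\to\K^Q_2$. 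Similarly $(\mu_{2n+2})_{-2n}=\mu_2$, since both morphisms come from $\mu_1\colon\K^M_1\isomto\K^Q_1$.

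The identity $\psi_{2n+2}\circ\mu_{2n+2}=(2n+1)!\cdot\mathrm{Id}$ on $\K^M_{2n+2}$ (recalled prior to the statement of Proposition \ref{prop:commutativeepimorphisms}) contracts to $(\psi_{2n+2})_{-2n}\circ\mu_2=(2n+1)!\cdot\mathrm{Id}$ on $\K^M_2$. By Matsumoto's theorem $\mu_2$ is an isomorphism, so $(\psi_{2n+2})_{-2n}=(2n+1)!\cdot\mu_2^{-1}$. Combining these identifications,
\[
(\varphi'_{2n+2})_{-2n}=(2n+1)!\cdot\mu_2^{-1}\circ f_{2,2},
\]
and hence $\mathrm{Im}((\varphi'_{2n+2})_{-2n})=(2n+1)!\cdot\mu_2^{-1}(\mathrm{Im}(f_{2,2}))$.

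To conclude, I need $f_{2,2}\colon\K^{Sp}_2\to\K^Q_2$ to be an epimorphism of sheaves. Both sides being unramified, it suffices to check surjectivity on sections over (finitely generated) field extensions $F/k$: Matsumoto's theorem identifies $K_2(F)$ with $K_2^M(F)$, and Suslin's description of $K_2Sp(F)$ shows the forgetful map sends each symplectic Steinberg symbol to the corresponding Milnor symbol, so surjects onto $K_2^M(F)$. Granting this, $\mathrm{Im}((\varphi'_{2n+2})_{-2n})=(2n+1)!\,\K^M_2$, so $(\mathbf{S}'_{2n+2})_{-2n}\cong\K^M_2/(2n+1)!$, and the induced map $\K^M_2/(2n+1)!\to(\mathbf{S}'_{2n+2})_{-2n}$ is the asserted isomorphism. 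The main obstacle is the surjectivity of $f_{2,2}$: without it one obtains only the weaker description of $(\mathbf{S}'_{2n+2})_{-2n}$ as a cokernel modulo a possibly smaller subsheaf, and the clean identification with $\K^M_2/(2n+1)!$ would fail (since $\mathrm{Im}(f_{2,2})$ a priori only contains $2\K^Q_2$ from $f_{2,2}\circ H_{2,2}=2$, which is insufficient).
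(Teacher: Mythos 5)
Your proof follows essentially the same route as the paper: contract the defining composite $2n$ times, identify $(f_{2n+2,2})_{-2n}$ with $f_{2,2}$ via Karoubi periodicity and $4$-periodicity, and reduce to showing $f_{2,2}\colon\K^{Sp}_2\to\K^Q_2$ is an epimorphism so that the cokernel is that of $(\psi_{2n+2})_{-2n}$ alone, namely $\K^M_2/(2n+1)!$. The only divergence is in justifying surjectivity of $f_{2,2}$: where you appeal to Suslin's presentation of $K_2Sp(F)$ by symplectic Steinberg symbols, the paper instead uses the identification $\mathbf{GW}^2_2\cong\K^{MW}_2$ (from Theorem \ref{thm:stabilizationspn} and Morel's computation of $\bpi_1^{\aone}(\mathbb{A}^2\setminus 0)$), under which $f_{2,2}$ becomes the canonical epimorphism $\K^{MW}_2\to\K^M_2$ — a somewhat more self-contained argument given the framework already set up, but both are valid.
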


\begin{proof}
Recall from Section \ref{section:aonehomotopynonstable} that there is a commutative diagram of the form
\[
\xymatrix{\K_{2n+2}^{Sp}\ar[r]^-{\varphi_{2n+2}}\ar[d]_-{f_{2n+2,2}}  & \K_{2n+2}^{MW}\ar[d] \\
\K_{2n+2}^Q\ar[r]_-{\psi_{2n+2}} & \K_{2n+2}^M.}
\]
It follows that the sheaf ${\mathbf S}'_{2n+2}$ is the cokernel of the composite map
\[
\xymatrix{\K^{Sp}_{2n+2}=\mathbf {GW}_{2n+2}^2 \ar[r]^-{f_{2n+2,2}} & \K^Q_{2n+2}\ar[r] & \K^M_{2n+2}.}
\]
Contracting $2n$ times and using Proposition \ref{prop:contractionofgw}, we obtain a composite
\[
\xymatrix@C=3em{\mathbf {GW}_2^{2-2n} \ar[r]^-{f_{2,2-2n}} & \K^Q_2\ar[r] & \K^M_2}
\]
whose cokernel is $({\mathbf S}'_{2n+2})_{-2n}$. We know from \cite[Corollary 3.11]{AsokFaselSpheres} that the cokernel of the morphism $\K^Q_2\to \K^M_2$ is precisely $\K^M_2/((2n+1)!)$ and it suffices to show that $f_{2,2-2n}$ is onto to conclude.

Since $n$ is even, we can identify $\mathbf {GW}_2^{2-2n}=\mathbf {GW}_2^2$ and the forgetful map $f_{2,2}$ is the natural epimorphism $\mathbf{GW}^2_2 \cong \K^{MW}_2\to \K^M_2$.
\end{proof}

\begin{lem}
\label{lem:contractionsofsprimenodd}
If $n$ is an odd natural number, then the morphism of sheaves
\[
\K^M_{2n+2}/(2(2n+1)!)\to {\mathbf S}'_{2n+2}
\]
induces an isomorphism after $j$-fold contraction for any $j \geq 2n+1$.
\end{lem}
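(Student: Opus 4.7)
The plan is to exploit the definition of $\mathbf{S}'_{2n+2}$ as the cokernel of $\varphi'_{2n+2} = \psi_{2n+2} \circ f_{2n+2,2}: \mathbf{GW}^2_{2n+2} \to \K^M_{2n+2}$, and analyze the behavior of this cokernel under iterated contraction. Since contraction is exact on strictly $\aone$-invariant sheaves, and $(\mathbf{GW}^2_{2n+2})_{-j} = \mathbf{GW}^{2-j}_{2n+2-j}$ by Proposition \ref{prop:contractionofgw}, the sheaf $(\mathbf{S}'_{2n+2})_{-j}$ is the cokernel of the contracted composite
\[
\mathbf{GW}^{2-j}_{2n+2-j} \xrightarrow{f_{2n+2-j,\,2-j}} \K^Q_{2n+2-j} \xrightarrow{(\psi_{2n+2})_{-j}} \K^M_{2n+2-j}.
\]
The goal is then to show that for $n$ odd and $j \geq 2n+1$ this image equals $2(2n+1)! \cdot \K^M_{2n+2-j}$. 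For $j \geq 2n+3$ both sides vanish (since $\K^M_m = 0$ for $m < 0$ and the contraction of the constant sheaf $\Z$ is zero), so the essential cases are $j = 2n+1$ and $j = 2n+2$.

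First I would pin down the contracted Suslin map. The identity $\psi_{2n+2} \circ \mu_{2n+2} = (2n+1)!\cdot \mathrm{Id}$ from \cite[Lemma 3.10]{AsokFaselSpheres}, combined with exactness of contraction and the compatibility $(\mu_n)_{-1} = \mu_{n-1}$, yields $(\psi_{2n+2})_{-j} \circ \mu_{2n+2-j} = (2n+1)! \cdot \mathrm{Id}$ on $\K^M_{2n+2-j}$. Matsumoto's theorem provides $\mu_1: \K^M_1 \isomto \K^Q_1$, and $\mu_0: \K^M_0 = \Z = \K^Q_0$ is tautologically an isomorphism. Hence for $j \in \{2n+1,\, 2n+2\}$ the map $(\psi_{2n+2})_{-j}$ is, under these canonical identifications, just multiplication by $(2n+1)!$; this is precisely why the statement requires $j \geq 2n+1$, since below that threshold $\mu_{2n+2-j}$ is not invertible and the factor $(2n+1)!$ cannot be read off so cleanly.

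Next I would compute $\mathrm{Im}(f_{2n+2-j,\,2-j})$ via the Karoubi periodicity long exact sequence. For $n$ odd, the index $2-j$ is congruent to $3 \bmod 4$ when $j = 2n+1$ and to $2 \bmod 4$ when $j = 2n+2$. In the first case the relevant portion of Karoubi periodicity is
\[
\K^Q_2 \xrightarrow{H_{2,0}} \mathbf{GW}^0_2 \xrightarrow{\eta} \mathbf{GW}^3_1 \xrightarrow{f_{1,3}} \K^Q_1 \xrightarrow{H_{1,0}} \mathbf{GW}^0_1,
\]
which identifies $\mathrm{Im}(f_{1,3})$ with $\ker(H_{1,0})$; by Lemma \ref{lem:contractionsAB} this kernel is exactly $2\K^Q_1$. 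In the second case the sequence
\[
\K^Q_1 \xrightarrow{H_{1,3}} \mathbf{GW}^3_1 \xrightarrow{\eta} \mathbf{GW}^2_0 \xrightarrow{f_{0,2}} \K^Q_0 \xrightarrow{H_{0,3}} \mathbf{GW}^3_0
\]
identifies $\mathrm{Im}(f_{0,2})$ with $\ker(H_{0,3})$, and since $H_{0,3}: \Z \to \mathbf{GW}^3_0 \cong \Z/2$ is the natural surjection recalled in the proof of Lemma \ref{lem:contractionsAB}, this kernel is $2\K^Q_0$.

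Combining these computations, $\mathrm{Im}((\varphi'_{2n+2})_{-j}) = (2n+1)! \cdot 2\K^M_{2n+2-j} = 2(2n+1)! \K^M_{2n+2-j}$ in both essential cases, so the contracted cokernel is $\K^M_{2n+2-j}/(2(2n+1)!) = (\K^M_{2n+2}/(2(2n+1)!))_{-j}$, which is exactly the claim. The main obstacle I foresee is the subtle identification of the contracted Suslin map: one might naively assume $(\psi_{2n+2})_{-j} = \psi_{2n+2-j}$, but this would force the contradictory equation $(2n+1)! = (2n+1-j)!$; the resolution is the rigidity observation above, namely that at exactly the contractions $j = 2n+1, 2n+2$ the canonical isomorphisms $\K^M_1 = \K^Q_1$ and $\K^M_0 = \K^Q_0$ force $(\psi_{2n+2})_{-j}$ to be multiplication by $(2n+1)!$, matching the factor produced by the Karoubi computation of $\mathrm{Im}(f)$.
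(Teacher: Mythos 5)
Your proof is correct and has the same overall shape as the paper's — contract $2n+1$ times, reduce to analyzing the composite $\mathbf{GW}^3_1 \xrightarrow{f_{1,3}} \K^Q_1 \to \K^M_1$, and compute its cokernel — but the justifications for both key sub-steps are routed differently. The paper cites \cite[Lemma 2.9]{AsokFaselSpheres} to identify the cokernel of the contracted Suslin map $\K^Q_1\to\K^M_1$ as $\K^M_1/(2n+1)!$; you instead derive this from the identity $\psi_{2n+2}\circ\mu_{2n+2}=(2n+1)!\cdot\mathrm{Id}$ (that is, \cite[Lemma 3.10]{AsokFaselSpheres}) together with exactness of contraction and Matsumoto's theorem, which has the added merit of making transparent why $j\geq 2n+1$ is exactly the range in which the factorial can be read off. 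The paper invokes \cite[Lemma 2.3]{FaselRaoSwan} together with Lemma~\ref{lem:computationfH} to show $\operatorname{Im}(f_{1,3})=2\K^Q_1$; you instead read $\operatorname{Im}(f_{1,3})=\ker(H_{1,0})$ off the Karoubi periodicity sequence and reuse the computation of $\ker(H_{1,0})$ already carried out (via Bass) in the proof of Lemma~\ref{lem:contractionsAB}, keeping the argument internal to this paper. You also treat $j=2n+2$ and $j\geq 2n+3$ explicitly, whereas the paper handles only $j=2n+1$ and lets exactness of contraction propagate the isomorphism to larger $j$. The one point you should make explicit: the compatibility $(\mu_m)_{-1}=\mu_{m-1}$ is not recorded in the paper (only the analogous statements for $H_{i,j}$ and $f_{i,j}$ appear, in Remark~\ref{rem:karoubiperiodicitycontractions}), so a short justification — $\mu_m$ is defined by cup products and contraction respects the multiplicative structure — would round off the argument.
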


\begin{proof}
Arguing as in the previous lemma, we obtain a composite morphism
\[
\xymatrix@C=3em{\mathbf {GW}_1^{1-2n} \ar[r]^-{f_{1,1-2n}} & \K^Q_1\ar[r] & \K^M_1}
\]
whose cokernel is $({\mathbf S}'_{2n+2})_{-2n-1}$. The cokernel of $\K^Q_1\to \K^M_1$ is $\K_1^M/((2n+1)!)$ by \cite[Lemma 2.9]{AsokFaselSpheres} and it remains to show that the image of $\mathbf {GW}_1^{1-2n}\to \K^Q_1$ is $2\K^Q_1$. Since $n$ is odd, we can identify $\mathbf {GW}_1^{1-2n}=\mathbf {GW}_1^3$ by Proposition \ref{prop:contractionofgw}.  Combining \cite[Lemma 2.3]{FaselRaoSwan} and Lemma \ref{lem:computationfH} yields the required statement regarding the image.
\end{proof}

\begin{lem}
\label{lem:d2nplus3isnontrivial}
If $n$ is an odd natural number, there is an isomorphism $\mathbf{D}_{2n+3}\cong \mathbf{W}$. In particular, the sheaf $\mathbf{D}_{2n+3}$ is non-trivial.
\end{lem}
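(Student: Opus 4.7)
The plan is to analyze $\mathbf{D}_{2n+3}$ via the snake lemma and the Karoubi periodicity exact sequence. By construction, $\mathbf{D}_{2n+3}$ is the image of $\mathbf{I}^{2n+3}$ in $\mathbf{T}'_{2n+2}$, so applying the snake lemma to the commutative diagram comparing the defining exact sequences $0\to \mathbf{I}^{2n+3}\to \K^{MW}_{2n+2}\to \K^M_{2n+2}\to 0$ and $0\to \mathbf{D}_{2n+3}\to \mathbf{T}'_{2n+2}\to \mathbf{S}'_{2n+2}\to 0$ identifies the kernel of the epimorphism $\mathbf{I}^{2n+3}\twoheadrightarrow \mathbf{D}_{2n+3}$ with $\mathrm{im}(\varphi_{2n+2})\cap \mathbf{I}^{2n+3}$, that is, with $\varphi_{2n+2}(\ker(\varphi^\prime_{2n+2}))$.

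Next, using the factorization $\varphi^\prime_{2n+2} = \psi_{2n+2}\circ f_{2n+2,2}$ recorded in Proposition \ref{prop:commutativeepimorphisms}, together with the Karoubi periodicity sequence
\[
\mathbf{GW}^3_{2n+3} \stackrel{\eta}{\longrightarrow} \K^{Sp}_{2n+2} \stackrel{f_{2n+2,2}}{\longrightarrow} \K^Q_{2n+2},
\]
I would locate a distinguished subsheaf of $\ker(\varphi^\prime_{2n+2})$, namely $\ker(f_{2n+2,2})=\mathrm{im}(\eta)$. Because the Grothendieck--Witt sheaves are $4$-periodic in the upper index, for $n$ odd the sheaf $\mathbf{GW}^3_{2n+3}$ acquires a Witt-theoretic flavor complementary to the Milnor-K-theoretic content isolated in Lemma \ref{lem:contractionsofsprimenodd}.

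The central step is to compute the composite $\varphi_{2n+2}\circ \eta:\mathbf{GW}^3_{2n+3}\to \mathbf{I}^{2n+3}$ and to show that the quotient of $\mathbf{I}^{2n+3}$ by its image is isomorphic to $\mathbf{W}$. Since strictly $\aone$-invariant sheaves are unramified in the sense of \cite[Theorem 2.12]{MField}, it suffices to verify this identification on sections over (finitely generated) field extensions $F/k$. Over such $F$, one would combine the Karoubi periodicity sequence at the level of fields with the explicit description of $\varphi_{2n+2}$ arising from the fiber sequence $Sp_{2n}\to Sp_{2n+2}\to Sp_{2n+2}/Sp_{2n}\simeq \mathbb{A}^{2n+2}\setminus 0$, and then invoke the Arason--Pfister filtration $0\to \mathbf{I}^{m+1}\to \mathbf{I}^m\to \overline{\mathbf{I}}^{m}\to 0$ to isolate the residual Witt-theoretic factor.

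The main obstacle is precisely this third step: the map $\varphi_{2n+2}$ comes from a geometric fiber sequence of symplectic groups, whereas $\eta$ is the algebraic Karoubi boundary, and reconciling them requires the representability of Grothendieck--Witt groups in $\ho{k}$ together with the multiplicative structure and the $\gm$-action recorded in Lemma \ref{lem:actionongwij} and Corollary \ref{cor:gmactionont2nplus2}. Once this compatibility is in place, the identification $\mathbf{D}_{2n+3}\cong \mathbf{W}$ follows by a Pfister-style cokernel computation in the odd-$n$ case, parallel to the arguments of Lemmas \ref{lem:contractionsofsprimeneven} and \ref{lem:contractionsofsprimenodd}.
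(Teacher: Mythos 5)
Your proposal takes a genuinely different route from the paper, and the decisive step is left as an acknowledged ``obstacle'' rather than being carried out. The paper proves the lemma by contracting: it applies $(\cdot)_{-(2n+3)}$ to the four-term exact sequence
\[
\K^{Sp}_{2n+2} \longrightarrow \K^{MW}_{2n+2} \longrightarrow \bpi_{2n}^{\aone}(Sp_{2n}) \longrightarrow \K^{Sp}_{2n+1} \longrightarrow 0.
\]
Using Proposition \ref{prop:contractionofgw} repeatedly (and $\mathbf{GW}^2_0 = \Z$, $\mathbf{GW}^3_0 = \Z/2$), the symplectic terms contract to zero: $(\K^{Sp}_{2n+2})_{-(2n+3)} \cong (\Z)_{-1} = 0$ and $(\K^{Sp}_{2n+1})_{-(2n+3)} \cong (\Z/2)_{-2} = 0$; meanwhile $(\K^{MW}_{2n+2})_{-(2n+3)} = \K^{MW}_{-1} \cong \mathbf{W}$. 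This gives $(\bpi_{2n}^{\aone}(Sp_{2n}))_{-(2n+3)} \cong \mathbf{W}$, hence $(\mathbf{T}'_{2n+2})_{-(2n+3)} \cong \mathbf{W}$, and then, since $(\mathbf{S}'_{2n+2})_{-(2n+3)} = 0$ by Lemma \ref{lem:contractionsofsprimenodd}, the sequence $0\to\mathbf{D}_{2n+3}\to\mathbf{T}'_{2n+2}\to\mathbf{S}'_{2n+2}\to 0$ forces $(\mathbf{D}_{2n+3})_{-(2n+3)}\cong\mathbf{W}$. At no point is an explicit description of $\varphi_{2n+2}$ needed; exactness and the contraction calculus do all the work.

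Your proposed route --- snake lemma, then Karoubi periodicity, then a computation of the composite $\varphi_{2n+2}\circ\eta$ and a Pfister-style cokernel identification --- is a plausible alternative program but is not a proof. The gap you yourself flag, namely reconciling the geometric connecting map $\varphi_{2n+2}$ with Karoubi's algebraic $\eta$, is exactly the hard content, and nothing in your sketch resolves it. There is also a second, unacknowledged gap: the snake-lemma identification gives the kernel of $\mathbf{I}^{2n+3}\twoheadrightarrow\mathbf{D}_{2n+3}$ as $\varphi_{2n+2}(\ker\varphi'_{2n+2})$, but you only propose to control the subsheaf $\mathrm{im}(\eta)=\ker(f_{2n+2,2})\subseteq\ker(\varphi'_{2n+2})$, leaving the contribution of $\ker(\psi_{2n+2})\cap\mathrm{im}(f_{2n+2,2})$ untreated; without ruling that out you cannot identify the quotient. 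Finally, note that the paper's argument really produces an isomorphism after $(2n+3)$-fold contraction (which is what is needed for non-triviality and for Theorem \ref{thm:complexrealizationsp2n}); your plan aims for the stronger uncontracted identification, which is not what the paper's proof establishes and would require substantially more work.
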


\begin{proof}
Consider the stabilization sequence
\[
\cdots \longrightarrow \K^{Sp}_{2n+2} \longrightarrow \K^{MW}_{2n+2} \longrightarrow \bpi_{2n}^{\aone}(Sp_{2n}) \longrightarrow \K^{Sp}_{2n+1} \longrightarrow 0.
\]
Since $n$ is odd, $(\K^{Sp}_{2n+2})_{-(2n+3)} = (\mathbf{GW}^{2}_{2n+2})_{-(2n+3)} \cong (\mathbf{GW}^2_0)_{-1} \cong (\mathbb{Z})_{-1} = 0$ by Proposition \ref{prop:contractionofgw}.  Similarly, using \cite[Lemma 4.1]{Fasel08c}, $(\K^{Sp}_{2n+1})_{-(2n+3)} = (\mathbf{GW}^2_{2n+1})_{-(2n+3)} \cong (\mathbf{GW}^3_0)_{-2} \cong (\Z/2)_{-2} = 0$.  Furthermore, $(\K^{MW}_{2n+2})_{-(2n+3)} = \K^{MW}_{-1} \cong \mathbf{W}$ by \cite[Proposition 2.9]{AsokFaselSpheres}.  Thus, one concludes that $(\bpi_{2n}^{\aone}(Sp_{2n}))_{-(2n+3)} \cong \mathbf{W}$.

By Theorem \ref{thm:firstnonstablehomotopysheafofsp2n}, and the discussion of the previous paragraph, it follows that $(\mathbf{T}'_{2n+2})_{-(2n+3)}\cong \mathbf{W}$.  Now, since $(\mathbf{S}'_{2n+2})_{-(2n+3)} \cong (\K^M_{2n+2}/(2(2n+1)!))_{-(2n+3)}$ by Lemma \ref{lem:contractionsofsprimenodd}, it follows from \cite[Lemma 2.7]{AsokFaselSpheres} that $(\mathbf{S}'_{2n+2})_{-(2n+3)} = 0$. Therefore, $(\mathbf{D}_{2n+3})_{-(2n+3)} \cong \mathbf{W}$ as well.
\end{proof}

\begin{thm}
\label{thm:complexrealizationgl2case}
The homomorphisms
\[
\begin{split}
\bpi_{2,3}^{\aone}(SL_2)(\cplx) &\longrightarrow \pi_5(SL_2(\cplx)) \cong \pi_5(SU(2)) = \Z/2, \text{ and } \\
\bpi_{2,4}^{\aone}(SL_2)(\cplx) &\longrightarrow \pi_6(SL_2(\cplx)) \cong \pi_6(SU(2)) = \Z/12
\end{split}
\]
are isomorphisms.
\end{thm}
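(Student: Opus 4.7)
The plan is to compute $\bpi_{2,j}^{\aone}(SL_2)(\cplx)$ explicitly for $j = 3, 4$, and then identify these groups with $\pi_{2+j}(S^3)$ via complex realization of the algebraic $\aone$-fiber sequence $Sp_2 \hookrightarrow Sp_4 \to Sp_4/Sp_2 \simeq \mathbb{A}^4 \setminus 0$, which realizes to the classical Hopf fibration $S^3 = Sp(1) \to Sp(2) \to S^7$.

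First I would apply $j$-fold contractions to the two exact sequences of Theorem~\ref{thm:firstnonstablehomotopysheafofsp2n} (with $n = 1$) and evaluate at $\cplx$. Since $\mathbf{D}_5$ is a quotient of $\mathbf{I}^5$ and $\mathbf{I}^m(\cplx) = 0$ for $m \geq 1$ (as $\cplx$ is quadratically closed), the $\mathbf{D}_5$ contribution vanishes at $\cplx$ for $j \in \{3,4\}$. Lemma~\ref{lem:contractionsofsprimenodd} (applicable since $n=1$ is odd and $j \geq 3$) identifies $(\mathbf{S}'_4)_{-j}(\cplx)$ with $\K_{4-j}^M(\cplx)/12$, which is $0$ for $j=3$ (by divisibility of $\cplx^\times$) and $\Z/12$ for $j=4$. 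Proposition~\ref{prop:contractionofgw}, $4$-periodicity, and \cite[Lemma 4.1]{Fasel08c} give $(\K_3^{Sp})_{-3}(\cplx) = \mathbf{GW}_0^3(\cplx) = \Z/2$ and $(\K_3^{Sp})_{-4}(\cplx) = 0$. Assembling the short exact sequences then yields the abstract group identifications $\bpi_{2,3}^{\aone}(SL_2)(\cplx) = \Z/2$ and $\bpi_{2,4}^{\aone}(SL_2)(\cplx) = \Z/12$.

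Second, to promote these coincidences to an isomorphism statement under realization, I would compare the $j$-fold contracted algebraic long exact sequence
\[
(\K_4^{Sp})_{-j}(\cplx) \to (\K_4^{MW})_{-j}(\cplx) \to \bpi_{2,j}^{\aone}(SL_2)(\cplx) \to (\K_3^{Sp})_{-j}(\cplx) \to 0
\]
with the topological long exact sequence of $S^3 \to Sp(2) \to S^7$. For $j = 3$, topologically $\pi_6(S^7) = \pi_5(S^7) = 0$ forces $\pi_5(S^3) \cong \pi_5(Sp(2)) = \pi_5(Sp) = \Z/2$, and algebraically the analogous map $\bpi_{2,3}^{\aone}(SL_2)(\cplx) \to (\K_3^{Sp})_{-3}(\cplx)$ is an iso $\Z/2 \to \Z/2$; a diagram chase then reduces the conclusion to showing the right-hand comparison $\mathbf{GW}_0^3(\cplx) \to \pi_5(Sp)$ is an iso, which is Karoubi's and Schlichting's classical identification of hermitian K-theory of $\cplx$ with topological symplectic K-theory. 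For $j = 4$, both the algebraic and topological $\Z/12$ are presented as cokernels of maps out of groups isomorphic to $\Z$ (respectively $(\K_4^{MW})_{-4}(\cplx) = \mathbf{GW}(\cplx)$ and $\pi_7(S^7)$), and the realization between these targets is the motivic Brouwer degree of \cite[Corollary 6.43]{MField}, which realizes topologically to the classical Brouwer degree iso. A 5-lemma argument, matching the image $12\Z$ in both settings — algebraically via Lemma~\ref{lem:contractionsofsprimenodd} and topologically via the classical J-homomorphism computation — then delivers the isomorphism.

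The main obstacle is the verification that the stable-range realization maps on $\K_3^{Sp}$ and $\K_4^{MW}$ (in the appropriate contractions) match their classical topological analogues, and that the algebraic forgetful/stabilization map $(\K_4^{Sp})_{-4}(\cplx) \to (\K_4^{MW})_{-4}(\cplx)$ realizes to the topological boundary $\pi_7(Sp(2)) \to \pi_7(S^7)$ whose image is $12\Z$. These representability and comparison statements are classical but must be invoked carefully to guarantee strict commutativity in the comparison ladder of long exact sequences.
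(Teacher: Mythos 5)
Your computation of the abstract groups $\bpi_{2,j}^{\aone}(SL_2)(\cplx)$ for $j=3,4$ matches the paper exactly: contract the exact sequences of Theorem \ref{thm:firstnonstablehomotopysheafofsp2n} and evaluate at $\cplx$, using $\mathbf{I}^m(\cplx)=0$, divisibility of $\cplx^\times$, Lemma \ref{lem:contractionsofsprimenodd}, and the vanishing $(\K^{Sp}_3)_{-4}=0$. For $j=4$ your realization argument is also essentially the paper's argument made explicit: the paper cites \cite[Proposition 19.1]{BorelSerre} for the fact that the classifying map of $Sp_4/Sp_2\to BSp_2$ generates $\pi_6(S^3)$, and then concludes via compatibility of the algebraic fiber sequence with its realization, while you spell out the cokernel-of-$\Z$ presentation, the motivic Brouwer degree, and the five-lemma. (Minor quibble: $Sp(1)\to Sp(2)\to S^7$ is the symplectic Stiefel fibration, not the Hopf fibration $S^3\to S^7\to S^4$.)

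The $j=3$ case is where you genuinely depart, and there is a gap. The paper's proof is hands-on: it recalls (following \cite{Whiteheadpinplus2}) that the generator of $\pi_5(S^3)$ is the composite $\Sigma\eta_{\cplx}\circ\Sigma^2\eta_{\cplx}$ of suspensions of the Hopf map, then exhibits the corresponding composite of the $\gm$- and $\pone$-suspensions of the algebro-geometric Hopf map $\eta:\mathbb{A}^2\setminus 0\to\pone$, whose realization is the topological generator because $\eta$ realizes to $\eta_{\cplx}$ --- no stable comparison result is required. You instead reduce via the long exact sequence ladder to the claim that the realization map $\mathbf{GW}^3_0(\cplx)\to\pi_5(Sp_{\mathrm{top}})$ is an isomorphism of two copies of $\Z/2$. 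That claim is a stable hermitian K-theory rigidity comparison over $\cplx$; you attribute it to Karoubi and Schlichting without a precise reference, and the map $\Z/2\to\Z/2$ could a priori be zero. Ruling this out requires a genuine rigidity-type input (a Suslin-rigidity statement for Grothendieck--Witt theory over $\cplx$, together with compatibility of the motivic realization of $\K^{Sp}$ with the stabilization $Sp_2\to Sp_\infty$ in the contracted, sheaf-theoretic form you are using). You flag this yourself as ``the main obstacle,'' and you are right to do so: as written, the $j=3$ case is incomplete without supplying that input, whereas the paper's Hopf-map construction is the more elementary, self-contained route that avoids exactly this dependency.
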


\begin{proof}
Recall the description of $\bpi_2(SL_2)$ from Theorem \ref{thm:firstnonstablehomotopysheafofsp2n}: there is an exact sequence of the form:
\[
\mathbf{I}^5 \longrightarrow \mathbf{T}_4^{\prime} \longrightarrow \mathbf{S}_4^{\prime} \longrightarrow 0.
\]
Contracting this sequence $4$ times, evaluating on $\cplx$ (using the fact that $\mathbf{I}(\cplx) = 0$), and using Lemma \ref{lem:contractionsofsprimenodd} yields an isomorphism $\Z/12 \isomt (\mathbf{S}_4^{\prime})_{-4}(\cplx)$.  We observed that $(\K^{Sp}_3)_{-4} = 0$ in the proof of Lemma \ref{lem:d2nplus3isnontrivial}.  Thus, there is an isomorphism $\Z/12 \isomt \bpi_{2,4}^{\aone}(SL_2)(\cplx)$.

By \cite[Proposition 19.1]{BorelSerre} one knows that the classifying map of the $Sp_2$-bundle $Sp_{4}/Sp_2 \to BSp_2$ provides a generator of $\pi_6(S^3) \cong \pi_7(BSp_2(\cplx))$.  The computation of $\bpi_{2}^{\aone}(SL_2)$ was achieved using the $\aone$-fiber sequence $Sp_2 \to Sp_4 \to Sp_4/Sp_2$, and the isomorphism $\Z/12 \to \bpi_{2,4}^{\aone}(SL_2)(\cplx)$ is induced by the connecting homomorphism of the associated long exact sequence in $\aone$-homotopy sheaves.  Since the fiber sequence $Sp_2 \to Sp_4 \to Sp_4/Sp_2$ is mapped under complex realization to one homotopy equivalent to that considered by Borel-Serre, the result follows.

For the first isomorphism, recall first that $\K^M_1/12(\cplx) = 0$.  Then, using the fact that $\K^{Sp}_3 = \mathbf{GW}^2_3$ observe that $(\mathbf{GW}^2_3)_{-3} = (\mathbf{GW}^{0}_1)_{-1} = \Z/2$ (again, use Lemma \cite[Lemma 4.1]{Fasel08c} and Proposition \ref{prop:contractionofgw}).  Now, by Lemma \ref{lem:contractionsofsprimeneven}, the fact that $\K^M_1/12(\cplx) = 0$, the fact that $\mathbf{I}^2(\cplx) = 0$, and the fact that $(\K^{Sp}_{3})_{-3} = \Z/2$, we see that $\bpi_{2,3}^{\aone}(SL_2)(\cplx) \cong \Z/2$.  Thus, complex realization gives a map $\bpi_{2,3}^{\aone}(SL_2)(\cplx) = \Z/2 \to \Z/2$.  The computation of \cite{Whiteheadpinplus2} shows that (see the proof of \cite[Theorem 15.2]{HuHomotopy} for more details) the generator of $\bpi_5(S^3)$ is obtained as follows: start with the Hopf map $\eta_{\cplx}: S^3 \to S^2$ and consider the composition $\Sigma \eta_{\cplx} \circ \Sigma^2 \eta_{\cplx}$.  Now, there is the algebro-geometric Hopf map $\eta: {\mathbb A}^2 \setminus 0 \to \pone$ (see \cite[\S 6.3 and Example 6.26]{MField}), and taking the $\gm$ and $\pone$-suspensions of this map we obtain:
\[
\begin{split}
\Sigma_{\gm} \eta&: {\pone}^{\wedge 2} \longrightarrow {\mathbb A}^2 \setminus 0 \\
\Sigma_{\pone} \eta&: {\mathbb A}^3 \setminus 0 \longrightarrow {\pone}^{\wedge 2}.
\end{split}
\]
The composite of these two maps has complex realization the generator of the $\pi_5(S^3)$ since the complex realization of $\eta$ is the usual Hopf map $\eta_{\cplx}$.
\end{proof}

\begin{thm}
\label{thm:complexrealizationsp2n}
The homomorphism
\[
\bpi_{2n,2n+2}^{\aone}(Sp_{2n})(\cplx) \longrightarrow \bpi_{4n+2}(Sp_{2n}(\cplx))
\]
induced by complex realization is an isomorphism.
\end{thm}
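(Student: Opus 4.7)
The plan is to parallel the argument used for Theorem \ref{thm:complexrealizationgl2case}, combining three ingredients: an algebraic computation of $\bpi_{2n,2n+2}^{\aone}(Sp_{2n})(\cplx)$ via Theorem \ref{thm:firstnonstablehomotopysheafofsp2n} together with Lemmas \ref{lem:contractionsofsprimeneven} and \ref{lem:contractionsofsprimenodd}; the classical Borel--Serre computation of $\pi_{4n+2}(Sp(n))$; and naturality of complex realization applied to the symplectic stabilization fiber sequence.

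First, by Morel's identification $\bpi_{2n,2n+2}^{\aone}(Sp_{2n})(\cplx) = (\bpi_{2n}^{\aone}(Sp_{2n}))_{-(2n+2)}(\cplx)$, I will apply $(\cdot)_{-(2n+2)}$ to the two exact sequences of Theorem \ref{thm:firstnonstablehomotopysheafofsp2n}. Proposition \ref{prop:contractionofgw} yields $(\K^{Sp}_{2n+1})_{-(2n+2)} \cong \mathbf{GW}^{-2n}_{-1}$, which by Grothendieck--Witt $4$-periodicity is either $\mathbf{W}^1$ or $\mathbf{W}^3$ depending on the parity of $n$; both vanish on any field by \cite[Proposition 5.2]{Balmer02}. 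Since also $(\mathbf{I}^{2n+3})_{-(2n+2)} \cong \mathbf{I}$ and $\mathbf{I}(\cplx) = 0$, contracting the second sequence of Theorem \ref{thm:firstnonstablehomotopysheafofsp2n} produces canonical isomorphisms
\[
\bpi_{2n,2n+2}^{\aone}(Sp_{2n})(\cplx) \cong (\mathbf{T}'_{2n+2})_{-(2n+2)}(\cplx) \cong (\mathbf{S}'_{2n+2})_{-(2n+2)}(\cplx).
\]
Lemmas \ref{lem:contractionsofsprimeneven} and \ref{lem:contractionsofsprimenodd} then identify the right-hand group with $\Z/(2n+1)!$ for $n$ even and with $\Z/(2(2n+1)!)$ for $n$ odd.

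Next, the algebraic fiber sequence $Sp_{2n} \to Sp_{2n+2} \to \mathbb{A}^{2n+2}\setminus 0$ realizes under the complex-points functor to the classical fibration $Sp(n) \to Sp(n+1) \to S^{4n+3}$ (using that $Sp_{2n}(\cplx)$ deformation retracts onto $Sp(n)$). Naturality of realization therefore produces a commutative ladder of long exact sequences; after the appropriate contractions on the algebraic side and evaluation at $\cplx$, the relevant portion reads
\[
\xymatrix@C=1em{
(\K^{Sp}_{2n+2})_{-(2n+2)}(\cplx) \ar[r]\ar[d] & \mathbf{GW}(\cplx) \ar[r]\ar[d]^{\cong} & \bpi_{2n,2n+2}^{\aone}(Sp_{2n})(\cplx) \ar[r]\ar[d]^{r} & 0 \\
\pi_{4n+3}(Sp(n+1)) \ar[r] & \pi_{4n+3}(S^{4n+3}) \ar[r] & \pi_{4n+2}(Sp(n)) \ar[r] & 0.
}
\]
The top-right zero reflects the vanishing of $(\K^{Sp}_{2n+1})_{-(2n+2)}(\cplx)$ noted above, and the bottom-right zero is $\pi_{4n+2}(Sp) = 0$ by Bott periodicity, since $4n+2 \equiv 2,6 \pmod 8$. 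The middle vertical identifies $\mathbf{GW}(\cplx) \cong \Z$ with $\pi_{4n+3}(S^{4n+3}) \cong \Z$ via rank and topological degree respectively; it is an isomorphism because of Morel's identification $\bpi_n^{\aone}(\mathbb{A}^{n+1}\setminus 0) \cong \K^{MW}_{n+1}$ combined with the agreement of algebraic Brouwer degree with topological degree under realization.

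To conclude, the classical Borel--Serre computation \cite[Proposition 19.1]{BorelSerre} (and its generalization from $n=1$ to arbitrary $n$) determines that $\pi_{4n+2}(Sp(n))$ is cyclic of order $(2n+1)!$ for $n$ even and of order $2(2n+1)!$ for $n$ odd, precisely matching the orders of the algebraic groups computed above. A diagram chase then shows that $r$ is surjective, and being a surjection between cyclic groups of the same finite order, it is an isomorphism. The main obstacle is the matching of these two orders, which rests ultimately on the compatibility of Morel's algebraic Brouwer degree with the classical topological degree under complex realization --- the same ingredient that was central to the proof of the $n=1$ case in Theorem \ref{thm:complexrealizationgl2case}.
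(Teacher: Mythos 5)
Your proposal is correct and follows essentially the same strategy as the paper's proof: compute the algebraic side via the contracted exact sequences of Theorem \ref{thm:firstnonstablehomotopysheafofsp2n} (using the vanishing of the contributions from $\mathbf{I}^{2n+3}$ and $\K^{Sp}_{2n+1}$ after $(2n+2)$-fold contraction over $\cplx$, together with Lemmas \ref{lem:contractionsofsprimeneven} and \ref{lem:contractionsofsprimenodd}), identify the topological side from the Bott--Harris table, and then use realization of the stabilization fiber sequence plus the fact that the algebraic Brouwer degree realizes to the topological degree to lift a generator, concluding by matching orders of cyclic groups. Your ladder diagram and diagram chase make explicit what the paper compresses into ``the result follows by comparison with the proofs in [Bott--Harris]''; the only small slip is the attribution of the computation of $\pi_{4n+2}(Sp(n))$ to Borel--Serre (and ``its generalization''), whereas the paper correctly credits this to Bott--Harris \cite{BHarris2} — Borel--Serre is only invoked for the $n=1$ case in the proof of Theorem \ref{thm:complexrealizationgl2case}.
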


\begin{proof}
The complex realization of $Sp_{2n}$ is the group $Sp_{2n}(\cplx)$, which is homotopy equivalent to its maximal compact subgroup (denoted $Sp_n$ in \cite{BHarris2}).  It follows from the table on \cite[p.175]{BHarris2} that $\pi_{4n+2}(Sp_{2n}(\cplx))$ is $\Z/(2n+1)!$ if $n$ is even and $\Z/(2(2n+1)!)$ if $n$ is odd.  There is a canonical morphism $\bpi_{2n,2n+2}^{\aone}(Sp_{2n})(\cplx) \to \pi_{4n+2}(Sp_{2n}(\cplx))$.  Since $\mathbf{W}(\cplx) = \Z/2$, in view of Lemmas \ref{lem:contractionsofsprimeneven} and \ref{lem:contractionsofsprimenodd}, it suffices to prove that in either case we can lift a generator.  In each case, the generator is the image of a generator of $\bpi_{2n+1,2n+2}^{\aone}({\mathbb A}^{2n+2} \setminus 0)(\cplx) \cong (\K^{MW}_{2n+2})_{-2n-2}(\cplx) \cong \mathbf{GW}(\cplx) = \Z$.  Since the generator of this group is mapped to the generator of $\pi_{4n+3}(S^{4n+3})$ under complex realization, the result follows by comparison with the proofs in \cite[pp.182-184]{BHarris2}.
\end{proof}

\begin{footnotesize}
\bibliographystyle{alpha}
\bibliography{vectorbundlesonthreefolds}
\end{footnotesize}
\end{document}